\numberwithin{equation}{section}
\numberwithin{figure}{section}
\numberwithin{table}{section}
\newtheorem{theorem}{Theorem}[section]
\newtheorem{definition}{Definition}[section]
\newtheorem{remark}{Remark}[section]
\newtheorem{example}{Example}[section]
\renewcommand{\vec}[1]{\boldsymbol{#1}}
\begin{document}
\title{Well-balanced path-conservative discontinuous Galerkin methods with equilibrium preserving space  for two-layer shallow water equations}
\date{}

 \author{}
 \author{Jiahui Zhang\thanks{School of Mathematical Sciences, University of Science and Technology of China, Hefei, Anhui 230026, P.R. China.  E-mail: zjh55@mail.ustc.edu.cn.}
 	\and Yinhua Xia\thanks{Corresponding author. School of Mathematical Sciences, University of Science and Technology of China, Hefei, Anhui 230026, P.R. China.  E-mail: yhxia@ustc.edu.cn.
 	}
 	\and Yan Xu\thanks{School of Mathematical Sciences, University of Science and Technology of China, Hefei, Anhui 230026, P.R. China.  E-mail: yxu@ustc.edu.cn.
 	}
 }

\maketitle

\begin{abstract}
This paper introduces well-balanced path-conservative discontinuous Galerkin (DG) methods for two-layer shallow water equations, ensuring exactness for both still water and moving water equilibrium steady states. The approach involves approximating the equilibrium variables within the DG piecewise polynomial space, while expressing the DG scheme in the form of path-conservative schemes. To robustly handle the nonconservative products governing momentum exchange between the layers, we incorporate the theory of Dal Maso, LeFloch, and Murat (DLM) within the DG method. Additionally, linear segment paths connecting the equilibrium functions are chosen to guarantee the well-balanced property of the resulting scheme. The simple ``lake-at-rest" steady state is naturally satisfied without any modification, while a specialized treatment of the numerical flux is crucial for preserving the moving water steady state. Extensive numerical examples in one and two dimensions validate the exact equilibrium preservation of the steady state solutions and demonstrate its high-order accuracy. The performance of the method and high-resolution results further underscore its potential as a robust approach for nonconservative hyperbolic balance laws.

  \smallskip
  \textbf{Keywords}: Two-layer shallow water equations; discontinuous Galerkin method;  equilibrium preserving space; path-conservative;  well-balanced.
\end{abstract}

\clearpage
\section{Introduction}\label{se:in}
This paper considers the two-layer shallow water equations which become a useful tool for solving hydrodynamical flows such as rivers, bays, estuaries, and other nearshore areas where water flows interact with the bed geometry and wind shear stresses.  This model is derived from the compressible isentropic Euler equations through vertical averaging across each layer depth. Unlike the single-layer shallow water equations, the two-layer equations account for the vertical variation in density and velocity, making them suitable for modeling the flow dynamics over the nonflat topography of two superposed layers of immiscible fluids.

Numerical solutions for the two-layer shallow water equations present several challenges. The primary difficulty stems from the nonconservative products within the governing equations, preventing them from being expressed in divergence form. This characteristic places the equations within the category of nonconservative hyperbolic partial differential equations which have the form
\begin{equation}\label{ncs}
   \boldsymbol {u}_t + \mathcal{A}(\boldsymbol {u}) \boldsymbol {u}_x = \boldsymbol{r}(\boldsymbol {u},b).
\end{equation}
Here, $\boldsymbol {u}$ denotes the conservative variable, $\boldsymbol{r}(\boldsymbol {u},b)$ represents the source terms owing to the geometric bottom, and $\mathcal{A}(\boldsymbol {u}) \boldsymbol {u}_x$ is the nonconservative product. The coupling terms, which involve derivatives of the unknown physics variables, pose challenges in the distribution framework, particularly when the solution is discontinuous. As a result, the conventional concept of the weak solution is not applicable, and the classical Rankine-Hugoniot jump conditions cannot be defined. To address these issues, the theory introduced by Dal Maso, LeFloch, and Murat (DLM) \cite{dal1995definition} is utilized for nonconservative products. This approach connects the left and right states of the discontinuity with a family of paths in the phase space, providing a rigorous definition of the weak solution. 
A theoretical framework established by Par{\'e}s in \cite{pares2006numerical} introduces the concept of a path-conservative numerical scheme, which extends the traditional conservative method for conservative laws. Subsequently, significant mathematical research, as seen in \cite{munoz2011convergence,pares2004well,gosse2001well,castro2020well}, has been dedicated to the examination of path-conservative finite volume schemes and their application to various nonconservative hyperbolic systems. More recent advancements in this area are discussed in review papers such as \cite{pares2008path,castro2017well}. However, it is important to note that the selection of the path significantly influences the numerical solution. In \cite{abgrall2010comment}, Abgrall et al. demonstrated that due to the critical nature of path selection, a path-conservative scheme may not accurately compute nonconservative hyperbolic systems. Furthermore, it is generally not feasible to develop a scheme that converges to the assumed path, even when the correct path is presumed to be known.

The second challenging problem relates to the loss of hyperbolicity, leading to Kelvin-Helmholtz instabilities and the onset of mixing. This system is conditionally hyperbolic, and explicit eigenvalues cannot be determined. Addressing this difficulty has garnered significant interest in recent years. Various attempts have been made to overcome this issue. For example, Abgrall and Karni developed a one-dimensional relaxation approach to create an unconditionally hyperbolic model, restoring pressure and velocity interface conditions based on infinite relaxation parameters \cite{abgrall2009two}. This approach was further extended to a two-dimensional finite volume HLL scheme on triangular grids by Liu \cite{liu2021new}. 
In another approach, as described in \cite{castro2011numerical}, an additional amount of friction was introduced via a predictor or corrector strategy to enforce the hyperbolicity of the two-layer shallow water system. 
Another strategy involves introducing an intermediate layer around the original interface to enhance the stability properties of the system and restore hyperbolicity in regions of strong shear, as discussed in \cite{chertock2013three,castro2012hyperbolicity} and related references. Despite applying an arbitrary scheme to each layer, a well-balanced time splitting scheme was developed in \cite{bouchut2008entropy} using a fully discrete entropy inequality to control stability. However, it has been found that this scheme often yields incorrect solutions. To address this, a simple uncoupled method based on a source-centered hydrostatic reconstruction was devised in \cite{bouchut2010robust}. Subsequently, a new splitting strategy was introduced in \cite{berthon2015efficient}, defining two variables to solve two independent single-layer shallow water equations coupled by nonconservative products.

Another significant aspect of shallow water equations is their tendency to exhibit non-trivial steady-state solutions, where the non-zero flux gradient precisely balances the source term. Many standard numerical methods may introduce spurious oscillations due to the presence of the source term, leading to inadequate capture of the steady state unless highly refined meshes are used.  Therefore, it is crucial to design numerical methods that can exactly preserve equilibrium solutions at a discrete level and accurately resolve small perturbations to steady state solutions with relatively coarse meshes. Methods with these properties are referred to as ``well-balanced".  

Designing a well-balanced scheme for two-layer shallow water equations is conceptually more complex than for the single-layer system due to the presence of nonconservative interlayer momentum exchange terms. Over the past decade, significant attention has been devoted to the study and application of the two-layer shallow water equations. Kurganov et al. developed a second-order central-upwind (CU) scheme in \cite{kurganov2009central}, which was well-balanced and preserved positivity for the depth of each fluid layer. However, in certain situations, as illustrated in Section \ref{se:nu}, the scheme in \cite{kurganov2009central} may fail to accurately capture the solution. To address this issue, \cite{diaz2019path} developed a robust second-order path-conservative central-upwind (PCCU) scheme by incorporating the central-upwind scheme into the path-conservative framework. Additionally, in \cite{chu2022fifth}, a fifth-order finite difference AWENO scheme based on the path-conservative central-upwind method was constructed to preserve the still water steady state (``lake-at-rest").

Furthermore, \cite{kurganov2023well} introduced a new second-order well-balanced scheme for the generalized moving equilibria of the nozzle flow system and the two-layer shallow water equations by expressing the target system in an equivalent quasi-conservative form with global fluxes. Well-balanced path-consistent schemes were also presented in \cite{dudzinski2011well,dudzinski2013well} within the framework of finite volume evolution Galerkin (FVEG) methods. Several other methods have been proposed for two-layer shallow water equations, as detailed in \cite{mandli2013numerical,hernandez2021central,liu2021well}.
To the best of our knowledge, most existing schemes are well-balanced for the still water steady state. However, for the moving equilibria, the schemes are at most second-order accurate. 

In this paper, we will explore the finite element discontinuous Galerkin (DG) method, a popular high-order approach for solving hyperbolic conservation laws. This method offers a crucial advantage in simulating unstructured meshes without sacrificing high-order accuracy and conservation properties. Several studies have investigated the application of the DG framework to the two-layer shallow water equations. In \cite{izem2016discontinuous1}, the authors studied a DG method for approximating the two-layer system by employing an operator splitting method. Subsequently, \cite{izem2016discontinuous2} extended the DG scheme to the two-dimensional case on unstructured meshes using nodal polynomial basis functions. 
By introducing two auxiliary variables to reformulate two-layer systems into a new form, \cite{cheng2020high} developed a high-order central DG scheme for the preservation of the still water solution. A well-balanced path-conservative one-step ADER-DG scheme was developed in \cite{Dumbser2009ader}, where high-order accuracy was achieved with the $P_NP_M$ reconstruction operator on unstructured meshes and the local space-time Galerkin predictor. Recently, a novel discretization for multi-layer shallow water equations with variable density was also designed in \cite{guerrero2022arbitrary} under the ADER-DG framework, which is also well-balanced for the still water equilibria.  
Given that the shallow water equations can be expressed as nonconservative hyperbolic partial differential equations, \cite{rhebergen2008discontinuous} presented space and space-time discontinuous Galerkin finite element formulations. Similar path-conservative DG (PCDG) schemes have been successfully derived for other mathematical models. For example, in \cite{cheng2022bound}, a high-order positivity-preserving path-conservative discontinuous Galerkin method was developed for compressible two-medium flows, solving the five-equation transport model. Furthermore, in \cite{zhao2023path}, a path-conservative Arbitrary DERivatives in Space and Time (ADER) DG method was proposed to solve single-layer shallow water equations. For the Baer-Nunziato model, further information can be found in \cite{franquet2012runge}.

This paper aims to introduce high-order, well-balanced, path-conservative discontinuous Galerkin methods designed for two-layer shallow water equations to achieve equilibrium steady states in both still and moving water scenarios. These methods are developed based on the weak solution of the DLM theory \cite{dal1995definition}, which introduces a family of paths in the phase space. For the hydrostatic steady state, known as the ``lake-at-rest", we reframe the model in terms of equilibrium variables, an alternative and equivalent formulation of the original system. The numerical flux employed is the simplest Lax-Friedrichs without further modification. To preserve the generalized non-hydrostatic steady state, we incorporate the equilibrium-preserving space in DG methods. This concept, initially established for single-layer shallow water equations \cite{zhang2023moving}, has been extended to other hyperbolic balance laws such as the Ripa model and Euler equations with gravitation \cite{2023arXivequilibrium}. In this work, we extend these concepts into the path-conservative framework by reformulating the DG space in terms of equilibrium variables instead of conservative variables. Special modifications are made to the numerical flux, and linear segment paths are applied to the equilibrium functions. Theoretical analysis and numerical tests substantiate the well-balanced nature of both proposed schemes. Furthermore, the DG methods exhibit natural high-order accuracy and shock-capturing abilities with the incorporation of appropriate limiters. Numerical tests are presented to demonstrate the robustness and effectiveness of the proposed methods in achieving high resolution.

An outline of the rest paper is as follows.
Section \ref{se:mo} describes the mathematical model of the two-layer system and the DLM theory for nonconservative products. 
Section \ref{se:wb_s} presents a new PCDG method, which can maintain the ``lake-at-rest" steady state for two-layer shallow water equations.
In Section \ref{se:wb_m}, the PCDG method is proposed for the generalized moving equilibria.
Numerical results in different circumstances are illustrated in Section \ref{se:nu} to validate that the PCDG approach is well-balanced, high-order accurate, robust and shock-capturing.
Finally, some concluding remarks are provided in Section \ref{se:co}.

\section{Model description and DLM theory}\label{se:mo}
To begin with, we introduce the governing equations considered in this paper, along with the corresponding steady state solutions. We also provide a brief summary of the DLM theory.

\subsection{Two-layer system}\label{subse:tl}
The one-dimensional version of two-layer shallow water equations is given by
\begin{equation}\label{2LSWE_1d}
  \left\{\begin{array}{l}
    (h_1)_t+(h_1 u_1)_{x}=0, \\
    (h_1 u_1)_t+(h_1 u_1^2+\frac{1}{2}gh_1^2 )_x = -gh_1 b_x - gh_1 (h_2)_x,\\
    (h_2)_t+(h_2 u_2)_{x}=0, \\
    (h_2 u_2)_t+(h_2 u_2^2+\frac{1}{2}gh_2^2 )_x = -gh_2 b_x - grh_2 (h_1)_x,
  \end{array}\right.
\end{equation}
where the subscript 1 refers to the upper layer and index 2 to the lower one. $h_i$, ${u}_i$ and ${m}_i = h_i {u}_i$ represent the thickness, depth-averaged velocities and the discharges in each layer, while $b$ represents the nonflat bottom topography, and $g$ is the gravitational acceleration. Each layer is assumed to have a constant density $\rho_i$ and $r:={\rho_1}/{\rho_2}<1$ is the density ratio.
For simplicity, topography effects as well as friction with the bottom and between layers have been omitted here. The conservative variables  are denoted as
\begin{equation}\label{2LSWE_co}
\boldsymbol {u} = (h_1,m_1,h_2,m_2)^T,
\end{equation}
and the superscript $T$ denoting the transpose.
It can be expressed in the form of nonconservative balance laws as
\begin{equation}\label{2LSWE}
  \begin{aligned} 
   \boldsymbol {u}_t + \boldsymbol{f}(\boldsymbol {u})_x +  \mathcal{G}(\boldsymbol {u})\boldsymbol {u}_x =\boldsymbol {r}(\boldsymbol {u},b)
  \end{aligned}
\end{equation}
with
\begin{equation*}\scriptsize
   \boldsymbol{f}(\boldsymbol {u}) =
      \left[\begin{array}{c}
        m_1 \\
        h_1 u_1^2 + \frac{1}{2}gh_1^2 \\
        m_2 \\
        h_2 u_2^2 + \frac{1}{2}gh_2^2
      \end{array}\right], \
  \mathcal{G}(\boldsymbol {u}) =
      \left[\begin{array}{cccc}
            0 & 0 & 0 & 0  \\
            0 & 0 & gh_1 & 0 \\
            0 & 0 & 0 & 0  \\
            grh_2 & 0 & 0 & 0  \\
      \end{array}\right], \
  \boldsymbol {r}(\boldsymbol {u},b) =
    \left[\begin{array}{c}
    0\\
    -gh_1 b_x\\
    0 \\
    -gh_2 b_x
  \end{array}\right].
\end{equation*}

This model allows for non-trivial steady state solutions. The well-known generalized moving water equilibrium state solutions in one dimension are given by
\begin{equation}\label{moving_water:2LSWE}
  \begin{aligned}
    E_1 := \frac{1}{2}u_1^2 + g(h_1 + h_2 + b) = \text{constant}, \quad &m_1 = \text{constant}, \\
    E_2 := \frac{1}{2}u_2^2 + g(r h_1 + h_2 + b) = \text{constant}, \quad &m_2= \text{constant}.
  \end{aligned}
  \end{equation}
Here $m_1,m_2$ and $E_1, E_2$ are the moving water equilibrium variables that represent the specific mass flow rate and energy for the layers. Similar to the single-layer shallow water equations, no general form of the moving water equilibria exists in two dimensions. When the velocity reduces to zero, we get a special case of the generalized steady state (\ref{moving_water:2LSWE})
\begin{equation}\label{still_water:2LSWE}
  \begin{aligned}
    h_1 = \text{constant}, \quad w := h_2 + b = \text{constant}, \quad &u_1 = u_2 = 0,
  \end{aligned}
\end{equation}
which is known as the still water equilibrium, often likened to the ``lake at rest". Here $w$ denotes the interface between the upper and lower layers. We also denote $\varepsilon:= w+h_1$ to be the water surface.

The Jacobi matrix $\mathcal{A}(\boldsymbol u)$ in (\ref{ncs}), which contains the flux gradient terms as well as the coupling terms is defined by
\begin{equation}\label{Jaco}
    \mathcal{A}(\boldsymbol u)  =  \frac{\partial \boldsymbol{f}}{\partial \boldsymbol u} + \mathcal{G}(\boldsymbol {u})  =
    \left[\begin{array}{cccc}
      0 & 1 & 0 & 0  \\
      c_1^2 - u_1^2 & 2u_1 & c_1^2 & 0 \\
      0 & 0 & 0 & 1  \\
      rc_2^2 & 0 & c_2^2 - u_2^2 & 2u_2  \\
  \end{array}\right],
\end{equation}
where $c_i^2 = gh_i,i=1,2$ is the propagation celerity of internal and external waves. The eigenvalues $\lambda_{k}, k=1,\dots,4$ are to be determined from the characteristic polynomial
\begin{equation}\label{eig}
  P(\lambda) = \left((\lambda-u_1)^2- gh_1\right)\left((\lambda-u_2)^2- gh_2\right) - rg^2h_1 h_2.
\end{equation}
However, the explicit roots of the quartic equation (\ref{eig}) are too complex and not readily available. 
In the case of $r\thicksim 0$, the coupling terms do not significantly affect the nature of the system. Therefore,  we are mainly concerned with the case of $r\thicksim1$ and $u_1\thicksim u_2$, which is typical for oceanographic flows. Via applying the first-order expansion to $u_2-u_1$, \cite{schijf1953theoretical} provides four eigenvalues, two external and two internal,  corresponding to barotropic and baroclinic components of the flow,
\begin{equation}\label{ev}
  \begin{aligned}
    \lambda_{ext}^{\pm} = & U_m \pm \sqrt{g(h_1+h_2)}, \\
    \lambda_{int}^{\pm} = & U_c \pm \sqrt{g_r\dfrac{h_1 h_2}{h_1+h_2}\left[1-\dfrac{(u_1-u_2)^2}{g_r(h_1+h_2)} \right]}.
  \end{aligned}
\end{equation}
Here
\begin{equation*}
  U_m = \dfrac{h_1u_1+h_2u_2}{h_1+h_2}, \quad U_c = \dfrac{h_1u_2+h_2u_1}{h_1+h_2},
\end{equation*}
and $g_r = (1-r)g$ is the reduced gravity. From (\ref{ev}), it suggests that the system (\ref{2LSWE_1d}) is conditional hyperbolic, provided
\begin{equation}\label{ch}
(u_1-u_2)^2<g_r(h_1+h_2).
\end{equation}
Recently, Krvavica et al. \cite{krvavica2018analytical} provided an efficient alternative to a numerical eigensolver \cite{schijf1953theoretical}, based on the analytical solution to the quartic equation (\ref{eig}). We utilize the approach from \cite{krvavica2018analytical} for our simulation.

\subsection{DLM theory}\label{subse:DLM}
Before proceeding further, we provide a brief introduction to the definition of nonconservative products associated with the choice of a family of paths in the phase domain $ \Omega\subset\mathbb{R}^l$, here $l$ represents the number of components. This definition was developed by Dal Maso, LeFloch, and Murat in \cite{dal1995definition}.
\begin{definition}
  A family of paths in $\Omega\subset\mathbb{R}^l$ is a locally Lipschitz map
  $$\boldsymbol{\phi}: [0,1]\times \Omega \times \Omega \mapsto \Omega $$
  such that
  \begin{itemize}
    \setlength{\parskip}{0.0em}
    \item [(1)] For any $\boldsymbol {u}_L, \boldsymbol {u}_R\in \Omega$, $\boldsymbol{\phi}(0;\boldsymbol {u}_L,\boldsymbol {u}_R) = \boldsymbol {u}_L, \ \boldsymbol{\phi}(1;\boldsymbol {u}_L,\boldsymbol {u}_R) = \boldsymbol {u}_R$.
    \item [(2)] For every bounded set $\mathcal{O}\subset\Omega$, there exists a constant $k$ such that for any $s\in[0,1]$ and $\boldsymbol {u}_L, \boldsymbol {u}_R\in \mathcal{O}$,
    \begin{equation*}
      \left| \dfrac{\partial \boldsymbol{\phi}}{\partial \tau}(\tau;\boldsymbol {u}_L,\boldsymbol {u}_R) \right| \leq k |\boldsymbol {u}_R-\boldsymbol {u}_L|.
    \end{equation*}
    \item [(3)] For every bounded set $\mathcal{O}\subset\Omega$, there exists a constant $K$ such that for any $s\in[0,1]$ and $\boldsymbol {u}_L^1, \boldsymbol {u}_L^2, \boldsymbol {u}_R^1, \boldsymbol {u}_R^2\in \mathcal{O}$,
    \begin{equation*}
      \left| \dfrac{\partial \boldsymbol{\phi}}{\partial \tau}(\tau;\boldsymbol {u}_L^1,\boldsymbol {u}_R^1) - \dfrac{\partial \boldsymbol{\phi}}{\partial \tau}(\tau;\boldsymbol {u}_L^2,\boldsymbol {u}_R^2) \right| \leq K \left(|\boldsymbol {u}_L^2-\boldsymbol {u}_L^1| + |\boldsymbol {u}_R^2-\boldsymbol {u}_R^1|\right).
    \end{equation*}
  \end{itemize}
\end{definition}
\begin{theorem}(Dal Maso, LeFloch, and Murat (DLM\cite{dal1995definition}))
  Let $\boldsymbol {u}: [a,b]\rightarrow \mathbb{R}^l$ be a function of bounded variation and $\mathcal{A}: \mathbb{R}^l \rightarrow \mathbb{R}^{l\times l}$ be a smooth locally bounded matrix-valued function. Then, there exists a unique real-valued bounded Borel measure $\mu$ on $[a,b]$ characterized by the following two properties:
  \begin{itemize}
    \item [(1)] If $\boldsymbol {u}$ is continuous on a Borel set $B\subset [a,b]$, then
    \begin{equation*}
      \mu(B) = \int_{B} \mathcal{A}(\boldsymbol {u})\boldsymbol {u}_x \ dx.
    \end{equation*}
    \item [(2)] If $\boldsymbol {u}$ is discontinuous at a point $x_0\in [a,b]$, then
    \begin{equation*}
      \mu(x_0) = \left(\int_{0}^{1}  \mathcal{A}(\boldsymbol{\phi}(\tau;\boldsymbol {u}(x_0^-),\boldsymbol {u}(x_0^+))) \dfrac{\partial \boldsymbol{\phi}}{\partial \tau}(\tau; \boldsymbol {u}(x_0^-),\boldsymbol {u}(x_0^+)) \ d\tau \right) \delta(x_0).
    \end{equation*}
    Here $\delta(x_0)$ is the Dirac measure placed at $x_0$.
  \end{itemize}
 The Borel measure $\mu$ is called nonconservative product and can be written as $[\mathcal{A}(\boldsymbol {u}) \boldsymbol {u}_x]_{\boldsymbol{\phi}} $.
\end{theorem}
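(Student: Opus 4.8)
The plan is to prove existence by an explicit construction of $\mu$ and uniqueness by exploiting the decomposition of $[a,b]$ into the set where $\boldsymbol{u}$ is continuous and its (at most countable) jump set. First I would invoke the structure theorem for functions of bounded variation: the distributional derivative $D\boldsymbol{u}$ splits into a diffuse (continuous) part, carried on the continuity set $C$ of $\boldsymbol{u}$, and a purely atomic jump part, carried on the at most countable set $J = \{x_n\}$ of discontinuities, with $\sum_n |\boldsymbol{u}(x_n^+) - \boldsymbol{u}(x_n^-)| \le \mathrm{TV}(\boldsymbol{u}) < \infty$. In the present setting the diffuse part is absolutely continuous (as the form of property (1) indicates), so it equals $\boldsymbol{u}_x\,dx$; a general singular continuous part would be treated identically by integrating against the full diffuse part of $D\boldsymbol{u}$.

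For existence I would set $\mu = \mu_c + \mu_j$, defining $\mu_c(B) = \int_B \mathcal{A}(\boldsymbol{u})\,\boldsymbol{u}_x\,dx$ on the continuity set and assigning to each jump point the atom prescribed by property (2),
\[
\mu_j(\{x_n\}) = \int_0^1 \mathcal{A}\big(\boldsymbol{\phi}(\tau; \boldsymbol{u}(x_n^-), \boldsymbol{u}(x_n^+))\big)\, \frac{\partial \boldsymbol{\phi}}{\partial \tau}(\tau; \boldsymbol{u}(x_n^-), \boldsymbol{u}(x_n^+)) \, d\tau .
\]
Properties (1) and (2) then hold by construction, so the crux is to check that $\mu$ is a well-defined bounded Borel measure. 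The atomic part is where the path hypotheses are essential: using condition (2) of the definition of a family of paths, namely $|\partial_\tau \boldsymbol{\phi}| \le k\,|\boldsymbol{u}(x_n^+) - \boldsymbol{u}(x_n^-)|$ on a bounded set containing the range of $\boldsymbol{u}$, together with local boundedness of $\mathcal{A}$, I would derive $|\mu_j(\{x_n\})| \le C\,|\boldsymbol{u}(x_n^+) - \boldsymbol{u}(x_n^-)|$; summing and invoking the $\mathrm{BV}$ bound gives $\sum_n |\mu_j(\{x_n\})| < \infty$, so the jump part is a finite measure and $\mu$ is bounded. I expect this summability estimate --- confirming that the countable family of path integrals assembles into a single bounded measure --- to be the main obstacle, and it is precisely the role of the Lipschitz condition (2) on the path. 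Condition (3) enters as a complementary ingredient, ensuring the integrand depends measurably and stably on the endpoint traces $(\boldsymbol{u}(x_n^-), \boldsymbol{u}(x_n^+))$ so that the atoms are genuinely well-defined.

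For uniqueness I would suppose two bounded Borel measures $\mu_1, \mu_2$ both satisfy (1) and (2) and set $\nu = \mu_1 - \mu_2$. Writing any Borel set as a disjoint union $B = (B \cap C) \sqcup (B \cap J)$, property (1) applied to the Borel set $B \cap C$, on which $\boldsymbol{u}$ is continuous, forces $\nu(B \cap C) = 0$, while property (2) forces $\nu(\{x_n\}) = 0$ for every atom, whence $\nu(B \cap J) = 0$ by countable additivity. Therefore $\nu(B) = 0$ for all Borel $B$, giving $\mu_1 = \mu_2$. This confirms that the constructed measure is the unique one characterized by (1) and (2), and it depends only on $\boldsymbol{u}$, $\mathcal{A}$, and the fixed family of paths $\boldsymbol{\phi}$, justifying the notation $[\mathcal{A}(\boldsymbol{u})\,\boldsymbol{u}_x]_{\boldsymbol{\phi}}$.
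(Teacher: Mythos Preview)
The paper does not prove this theorem: it is stated as a foundational background result, attributed to Dal Maso, LeFloch, and Murat and cited from \cite{dal1995definition}, with no argument supplied. So there is no ``paper's own proof'' against which to compare your proposal.

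That said, your outline is a reasonable sketch of how the DLM result is established. The decomposition of a BV function's derivative into a diffuse part on the continuity set and an atomic part on the (countable) jump set is the right starting point, and your use of path condition (2) together with the local boundedness of $\mathcal{A}$ to bound each atom by $C\,|\boldsymbol{u}(x_n^+)-\boldsymbol{u}(x_n^-)|$ and then sum via the total variation is exactly the mechanism that makes the jump part a finite measure. Your uniqueness argument, splitting an arbitrary Borel set into its intersection with the continuity set and the jump set, is clean and correct. One point worth flagging: as you already note, for a general BV function the diffuse part of $D\boldsymbol{u}$ may contain a Cantor (singular continuous) component, so the expression $\int_B \mathcal{A}(\boldsymbol{u})\,\boldsymbol{u}_x\,dx$ in property (1) should be read as integration against the diffuse part of $D\boldsymbol{u}$; the original DLM paper handles this via a Vol'pert-type averaging argument, which is somewhat more delicate than your sketch indicates but follows the same structural plan.
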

According to the definition, across a discontinuity, a weak solution must satisfy the generalized Rankine-Hugoniot jump condition:
\begin{equation}\label{grh}
  \int_{0}^{1} \left(\xi \mathcal{I} - \mathcal{A}(\boldsymbol{\phi}(\tau;\boldsymbol {u}^-,\boldsymbol {u}^+)) \right)\dfrac{\partial \boldsymbol{\phi}}{\partial \tau}(\tau; \boldsymbol {u}^-,\boldsymbol {u}^+) \ d\tau  = 0,
\end{equation}
where $\xi$ is the speed of propagation of the discontinuity, $\mathcal{I}$ is the identity matrix, and $\boldsymbol {u}^-$, $\boldsymbol {u}^+$ are the left and right limits of the solution at the discontinuity. Notice that if the nonconservative products can be written into divergence form, (\ref{grh}) will reduce to the traditional Rankine-Hugoniot jump condition for conservation laws.

\begin{remark}
  We can observe that the choice of paths strongly influences the weak solution as well as the numerical scheme. How to choose a good family of paths is still an open question. See \cite{castro2017well,abgrall2010comment} for further discussion. It is not in the scope of this paper. The segment path which adopted in this paper is the simplest and most commonly used.
\end{remark}

\section{Still water equilibria preserving PCDG scheme}\label{se:wb_s}
In this section, we design a new well-balanced path-conservative discontinuous Galerkin method for the two-layer shallow water equations, aimed at precisely maintaining the still water equilibrium state (\ref{still_water:2LSWE}).

\subsection{Notations}\label{subse:no}
We consider the numerical discretization on a $d$-dimensional computational domain $\mathcal{D}\subset\mathbb{R}^d$. Let $\mathcal {T}$ be a family of partitions such that
$$\mathcal {D} = \bigcup\{\mathcal {K}|\mathcal {K}\in \mathcal {T}\}.$$
Specifically speaking, in the one-dimensional case, assuming that the domain is divided into $nx$ non-overlapping subintervals, 
namely the element $\mathcal {K}$ is taken as the cell $\mathcal I_j$ and
$$\mathcal {D} = \bigcup   _{j=1}^{nx} \mathcal I_j = \bigcup  _{j=1}^{nx}[x_{j-\frac{1}{2}}, x_{j+\frac{1}{2}}].$$
The $x_{j\pm \frac{1}{2}}$ is the left or right cell interface.
In two-dimensional case, the Cartesian mesh $\mathcal {T}$ is discretized by rectangular cells $\mathcal I_{ij}$, i.e.
$$\mathcal {D} = \bigcup  _{i=1}^{nx}\bigcup _{j=1}^{ny}{\mathcal I_{ij}} = \bigcup_{i=1}^{nx}\bigcup _{j=1}^{ny} [x_{i-\frac{1}{2}},x_{i+\frac{1}{2}}] \times [y_{j-\frac{1}{2}},y_{j+\frac{1}{2}}].$$

The finite dimensional test function space is defined in the following way
 \begin{equation}\label{fes1}
\mathcal{W}_h^k:= \left\{ w: w|_{\mathcal {K}}\in P^k(\mathcal {K}), \forall \mathcal {K}\in \mathcal {T}  \right\},
\end{equation}
where $P^k(\mathcal {K})$ denotes the space of polynomials in cell $\mathcal {K}$ of degree at most $k$, and
 \begin{equation}\label{fes2}
\boldsymbol{\mathcal{V}}_h^k:= \left\{ \boldsymbol \gamma: \boldsymbol \gamma = (\gamma_1,\dots, \gamma_{l})^T | \ \gamma_1,\dots,\gamma_{l} \in  \mathcal{W}_h^k  \right\}
\end{equation}
is the piecewise polynomial finite element space with vectors consisting of $l$ components.
For any unknown variable $u\in \mathcal{W}_h^k$, we still denote its numerical approximation as $u$ with abuse of notation.
The values $u^{-},u^{+}$ denote the left and right side limit states at a given cell interface $x_{j+\frac{1}{2}}$ in one-dimension, with the definition
\begin{equation*}
  \begin{aligned}
  u_{j+\frac{1}{2}}^- := \lim _{\epsilon \rightarrow 0^+} u(x - \epsilon), \ u_{j+\frac{1}{2}}^+ := \lim _{\epsilon \rightarrow 0^+} u(x + \epsilon),
  \end{aligned}
  \end{equation*}
  and similar notations apply to the two-dimensional case.

\subsection{One-dimensional PCDG scheme for still water equilibrium}\label{subse:1d}
Inspired by the framework developed in \cite{rhebergen2008discontinuous}, we first write the system in the following quasilinear form
\begin{equation}\label{re_2LSWE1D}
  \begin{aligned} 
  \boldsymbol{v}_t + {\boldsymbol{f}}(\boldsymbol{v})_x + \mathcal{G}(\boldsymbol{v})\boldsymbol{v}_x =0.
  \end{aligned}
\end{equation}
In order to derive a still water equilibrium preserving path-conservative DG space discretization, as well as avoiding the introduction of the numerical diffusion on the bottom function $b$, 
we rewrite the equation (\ref{2LSWE_1d}) as
\begin{equation}\label{re2LSWE_1d}
  \left\{\begin{array}{l}
    (h_1)_t+(m_1)_{x}=0, \\
    (m_1)_t+(\frac{m_1^2}{h_1}+\frac{1}{2}gh_1^2 )_x=-gh_1 w_x,\\
    (w)_t+(m_2)_{x}=0, \\
    (m_2)_t+(\frac{m_2^2}{w-b}+\frac{1}{2}gw^2 )_x = gb w_x - gr(w-b) (h_1)_x,
  \end{array}\right.
\end{equation}
which has the form of (\ref{re_2LSWE1D}). Here
\begin{equation}\label{equ_s}
  \boldsymbol{v}=(h_1,m_1,w,m_2)^T
\end{equation}
is the one-dimensional still water equilibrium variables that are constant at the ``lake-at-rest'' steady state. The flux gradient terms and the coupling terms in (\ref{re_2LSWE1D}) are given by
\begin{equation*}
  \boldsymbol{f}(\boldsymbol{v}) =
      \left[\begin{array}{c}
        m_1 \\
        \frac{m_1^2}{h_1}+\frac{1}{2}gh_1^2 \\
        m_2 \\
        \frac{m_2^2}{w-b}+\frac{1}{2}gw^2
      \end{array}\right], \quad
       \mathcal{G}(\boldsymbol{v}) =
      \left[\begin{array}{cccc}
            0 & 0 & 0 & 0  \\
            0 & 0 & gh_1 & 0 \\
            0 & 0 & 0 & 0  \\
            gr(w-b) & 0 & -gb & 0  \\
      \end{array}\right].
\end{equation*}
By applying the definition for the nonconservative products given in \cite{dal1995definition}, we define $\boldsymbol{\phi}^S_{j+\frac{1}{2}}(\tau)$ to be a Lipschitz continuous path function in the phase space for still water equilibrium preserving PCDG scheme. The well-balanced PCDG scheme reads as: 
Find $\boldsymbol{v}\in \boldsymbol{\mathcal{V}}_h^k$, s.t. for any test function $\boldsymbol{\varphi}\in \boldsymbol{\mathcal{V}}_h^k$ we have 
\begin{equation}\label{scheme:2LSWE_still}
  \dfrac{d}{dt}\int_{\mathcal I_j}\boldsymbol{v} \cdot \boldsymbol{\varphi} \ dx = \text{RHS}_j^S(\boldsymbol{v},b,\boldsymbol{\varphi})
\end{equation}
with
\begin{equation}\label{rhs_s}
  \begin{aligned}
   \text{RHS}_j^S(\boldsymbol{v},b,\boldsymbol{\varphi})=& 
    \int_{\mathcal I_j}\boldsymbol{f}(\boldsymbol{v}) \cdot \boldsymbol{\varphi}_x \ dx - 
    \widehat{\boldsymbol{f}}_{j+\frac{1}{2}} \cdot \boldsymbol{\varphi}_{j+\frac{1}{2}}^- + 
    \widehat{\boldsymbol{f}}_{j-\frac{1}{2}} \cdot \boldsymbol{\varphi}_{j-\frac{1}{2}}^+ \\
    -&\int_{\mathcal I_j}  \mathcal{G}(\boldsymbol{v})\boldsymbol{v}_x \cdot \boldsymbol{\varphi}\ dx 
      - \dfrac{1}{2} \boldsymbol{\varphi}_{j+\frac{1}{2}}^- \cdot \mathscr{g}_{\boldsymbol{\phi}^S,{j+\frac{1}{2}}} 
      - \dfrac{1}{2} \boldsymbol{\varphi}_{j-\frac{1}{2}}^+ \cdot \mathscr{g}_{\boldsymbol{\phi}^S,{j-\frac{1}{2}}}. 
  \end{aligned}
\end{equation}
Taking the stability of the scheme into consideration, we choose the simplest monotone Lax-Friedrichs numerical flux,
\begin{equation}\label{LF}
  \begin{aligned}
    \widehat{\boldsymbol{f}}_{j+\frac{1}{2}} =\widehat {\boldsymbol{f}}(\boldsymbol{v}_{j+\frac{1}{2}}^{-},\boldsymbol{v}_{j+\frac{1}{2}}^{+}) &= \dfrac{1}{2}\left(\boldsymbol{f}(\boldsymbol{v}_{j+\frac{1}{2}}^-) + \boldsymbol{f}( \boldsymbol{v}_{j+\frac{1}{2}}^+)  \right) -\dfrac{\alpha}{2}(\boldsymbol{v}_{j+\frac{1}{2}}^{+} - \boldsymbol{v}_{j+\frac{1}{2}}^{-}), 
  \end{aligned}
\end{equation}
where 
\begin{equation}\label{LF_alpha}
  \alpha = \max\limits_{\boldsymbol{v}} \{|\lambda_1^S(\boldsymbol{v})|,\dots,|\lambda_4^S(\boldsymbol{v})|\}.
\end{equation}
Here, $\lambda_k^S, \; k = 1, \cdots, 4$ are the eigenvalues of the Jacobi matrix $\frac{\partial \boldsymbol{f}}{\partial \boldsymbol{v}}$, and the maximum is taken globally in our computation. 

We adopt the linear path function to connect the left and right equilibrium state values, which is defined as follows: 
\begin{equation}\label{linear_path}
  \boldsymbol{\phi}^S_{j+\frac{1}{2}}(\tau) :=
  \boldsymbol{\phi}^S(\tau; \boldsymbol{v}_{j+\frac{1}{2}}^-,\boldsymbol{v}_{j+\frac{1}{2}}^+) =
  \boldsymbol{v}_{j+\frac{1}{2}}^- + \tau(\boldsymbol{v}_{j+\frac{1}{2}}^+ - \boldsymbol{v}_{j+\frac{1}{2}}^-).
\end{equation}
The path integral term in (\ref{rhs_s}) which reflects the contributions of the nonconservative products, is given by
\begin{equation}\label{path_s}
  \mathscr{g}_{\boldsymbol{\phi}^S,{j+\frac{1}{2}}} =  
    \int_0^1 \mathcal{G}(\boldsymbol{\phi}^S_{j+\frac{1}{2}}(\tau)) 
    \frac{\partial}{\partial \tau}
  { \boldsymbol{\phi}}^S_{j+\frac{1}{2}}(\tau) \ d\tau.
\end{equation}
By using the linear segment path, the second and fourth components of (\ref{path_s}) can be simplified into 
\begin{equation*}
  \begin{aligned}
    \mathscr{g}_{\boldsymbol{\phi}^{S},{j+\frac{1}{2}}}^{[2]} 
    = & \int_0^1  g \left((h_1)_{j+\frac{1}{2}}^{-} + \tau \left((h_1)_{j+\frac{1}{2}}^{+} - (h_1)_{j+\frac{1}{2}}^{-} \right) \right) (w_{j+\frac{1}{2}}^{+} - w_{j+\frac{1}{2}}^{-})\ d\tau  \\
    = &\  \dfrac{g}{2}  \left((h_1)_{j+\frac{1}{2}}^{-} + (h_1)_{j+\frac{1}{2}}^{+} \right) (w_{j+\frac{1}{2}}^{+} - w_{j+\frac{1}{2}}^{-}),
  \end{aligned}
\end{equation*}
and
\begin{align*}
    \mathscr{g}_{\boldsymbol{\phi}^{S},{j+\frac{1}{2}}}^{[4]} 
    = -\dfrac{g}{2}(b_{j+\frac{1}{2}}^{-} + b_{j+\frac{1}{2}}^{+}) (w_{j+\frac{1}{2}}^{+} - w_{j+\frac{1}{2}}^{-}) + 
       \dfrac{gr}{2}(w_{j+\frac{1}{2}}^{-} - b_{j+\frac{1}{2}}^{-} + w_{j+\frac{1}{2}}^{+} - b_{j+\frac{1}{2}}^{+}) \left((h_1)_{j+\frac{1}{2}}^{+} - (h_1)_{j+\frac{1}{2}}^{-} \right).
\end{align*} 

For hyperbolic conservation laws, the semi-discrete scheme is usually coupled with high-order strong stability preserving Runge-Kutta (SSP-RK) or multistep time discretization methods \cite{gottlieb2001strong, shu1988total}. This also applies to nonconservative hyperbolic systems. Here, we employ a third-order SSP-RK method for the time discretization,
\begin{equation}\label{3rk_s}
  \begin{aligned}
  & \int_{I_j} \boldsymbol{v}^{(1)} \cdot \boldsymbol{\varphi} \ dx 
  = \int_{I_j} \boldsymbol{v}^n \cdot \boldsymbol{\varphi} \ dx 
  + \Delta t  \ \text{RHS}_j^S(\boldsymbol{v}^n,b,\boldsymbol{\varphi}), \\ 
  & \int_{I_j} \boldsymbol{v}^{(2)} \cdot \boldsymbol{\varphi} \ dx 
  = \dfrac{3}{4}\int_{I_j} \boldsymbol{v}^{n} \cdot \boldsymbol{\varphi} \ dx 
  + \dfrac{1}{4}\left(\int_{I_j} \boldsymbol{v}^{(1)} \cdot \boldsymbol{\varphi} \ dx 
  + \Delta t  \ \text{RHS}_j^S(\boldsymbol{v}^{(1)},b,\boldsymbol{\varphi}) \right),\\ 
  & \int_{I_j} \boldsymbol{v}^{n+1} \cdot \boldsymbol{\varphi} \ dx 
  = \dfrac{1}{3}\int_{I_j} \boldsymbol{v}^{n} \cdot \boldsymbol{\varphi} \ dx 
  + \dfrac{2}{3}\left(\int_{I_j} \boldsymbol{v}^{(2)} \cdot \boldsymbol{\varphi} \ dx 
  + \Delta t  \ \text{RHS}_j^S(\boldsymbol{v}^{(2)},b,\boldsymbol{\varphi}) \right), 
  \end{aligned}
\end{equation}
where $\text{RHS}_j^S(\boldsymbol{v},b,\boldsymbol{\varphi})$ is the spatial operator in (\ref{rhs_s}).

In the DG framework, spurious numerical oscillations can arise for high-order methods when encountering discontinuous solutions. Consequently, slope limiters may be necessary after each time stage of SSP-RK methods. In our experiments, we adopt the total variation bounded (TVB) limiter \cite{cockburn1989tvb3,cockburn1998runge5}. The limiting procedure does not compromise the well-balanced property, as we have transformed the variables of our two-layer shallow water equations to equilibrium variables (\ref{equ_s}), which remain constant for still water equilibrium. 

\subsection{Two-dimensional PCDG scheme for still water equilibrium}\label{subse:2d}
In this section, we extend the well-balanced PCDG method for two-layer shallow water equations to two-dimensional space, which has the following form: 
\begin{equation}\label{re2LSWE_2d}
  \left\{\begin{array}{l}
    (h_1)_t + (m_1)_{x} + (n_1)_{y} = 0, \\
    (m_1)_t + (\frac{m_1^2}{h_1}+\frac{1}{2}gh_1^2 )_x + (\frac{m_1n_1}{h_1})_y = -gh_1 w_x,\\
    (n_1)_t + (\frac{m_1n_1}{h_1})_x + (\frac{n_1^2}{h_1}+\frac{1}{2}gh_1^2 )_y = -gh_1 w_y,\\
    (w)_t   + (m_2)_{x} + (n_2)_{y} = 0, \\
    (m_2)_t + (\frac{m_2^2}{w-b}+\frac{1}{2}gw^2 )_x + (\frac{m_2n_2}{w-b})_y = gb w_x - gr(w-b) (h_1)_x, \\
    (n_2)_t + (\frac{m_2n_2}{w-b})_x + (\frac{n_2^2}{w-b}+\frac{1}{2}gw^2 )_y = gb w_y - gr(w-b) (h_1)_y,
  \end{array}\right.
\end{equation}
where $m_i = h_iu_i$, $n_i = h_i v_i$ are the discharges in $x$- and $y$-directions. 
It can be written in the quasilinear form 
\begin{equation}\label{re_2LSWE2D}
  \begin{aligned}
  \boldsymbol{v}_t + \boldsymbol{f}_1(\boldsymbol{v})_x + \boldsymbol{f}_2(\boldsymbol{v})_y + \mathcal{G}_1(\boldsymbol{v})\boldsymbol{v}_x + \mathcal{G}_2(\boldsymbol{v})\boldsymbol{v}_y =0
  \end{aligned}
\end{equation}
with $\boldsymbol{v} = (h_1,m_1,n_1,w,m_2,n_2)^T$ being the two-dimensional still water equilibrium variables, which corresponds to those defined in \eqref{equ_s}, and the flux terms
\begin{equation*}
  \begin{aligned}
  \boldsymbol{f}_1(\boldsymbol{v}) &= \left(m_1, \frac{m_1^2}{h_1}+\frac{1}{2}gh_1^2, \frac{m_1n_1}{h_1}, m_2, \frac{m_2^2}{w-b}+\frac{1}{2}gw^2,\frac{m_2n_2}{w-b} \right)^T,\\
  \boldsymbol{f}_2(\boldsymbol{v}) &= \left(n_1, \frac{m_1n_1}{h_1}, \frac{n_1^2}{h_1}+\frac{1}{2}gh_1^2, n_2, \frac{m_2n_2}{w-b}, \frac{n_2^2}{w-b}+\frac{1}{2}gw^2 \right)^T,
  \end{aligned}
\end{equation*}
and the coupling terms
\begin{equation*}\scriptsize
  \mathcal{G}_1(\boldsymbol{v}) = 
  \left[\begin{array}{cccccc}
    0 & 0 & 0 & 0 & 0 & 0 \\
    0 & 0 & 0 & -gh_1 & 0 & 0 \\
    0 & 0 & 0 & 0 & 0 & 0 \\
    0 & 0 & 0 & 0 & 0 & 0 \\
    -gr(w-b) & 0 & 0 & gb & 0 & 0 \\
    0 & 0 & 0 & 0 & 0 & 0 \\
  \end{array}\right], \
  \mathcal{G}_2(\boldsymbol{v}) = 
  \left[\begin{array}{cccccc}
    0 & 0 & 0 & 0 & 0 & 0 \\
    0 & 0 & 0 & 0 & 0 & 0 \\
    0 & 0 & 0 & -gh_1 & 0 & 0 \\
    0 & 0 & 0 & 0 & 0 & 0 \\
    0 & 0 & 0 & 0 & 0 & 0 \\
    -gr(w-b) & 0 & 0 & gb & 0 & 0 \\
  \end{array}\right].
\end{equation*}
The two-dimensional well-balanced PCDG method on the Cartesian mesh is then obtained precisely as its one-dimensional prototype developed in Section \ref{subse:1d}. Find $\boldsymbol{v}\in \boldsymbol{\mathcal{V}}_h^k$, s.t. for any test function $\boldsymbol{\varphi}\in \boldsymbol{\mathcal{V}}_h^k$ we have
\begin{equation}\label{scheme:2LSWE_s2d}
  \dfrac{d}{dt}\iint_{\mathcal I_{ij}}\boldsymbol{v}\cdot \boldsymbol{\varphi} \ dx dy = \text{RHS}_{ij}^S(\boldsymbol{v},b,\boldsymbol{\varphi})
\end{equation}
with
\begin{equation*}
  \begin{aligned}
   \text{RHS}_{ij}^S
    =& 
    \iint_{\mathcal I_{ij}}\boldsymbol{f}_1(\boldsymbol{v}) \cdot \boldsymbol{\varphi}_x \ dx dy - 
    \int_{y_{j-\frac{1}{2}}}^{y_{j+\frac{1}{2}}}
    \widehat{\boldsymbol{f}_1}_{i+\frac{1}{2},j} \cdot \boldsymbol{\varphi}(x_{i+\frac{1}{2}}^-,y) - 
    \widehat{\boldsymbol{f}_1}_{i-\frac{1}{2},j} \cdot \boldsymbol{\varphi}(x_{i-\frac{1}{2}}^+,y) \ dy \\
    +&
    \iint_{\mathcal I_{ij}}\boldsymbol{f}_2(\boldsymbol{v}) \cdot \boldsymbol{\varphi}_y \ dx dy - 
    \int_{x_{i-\frac{1}{2}}}^{x_{i+\frac{1}{2}}}
    \widehat{\boldsymbol{f}_2}_{i,j+\frac{1}{2}} \cdot \boldsymbol{\varphi}(x,y_{i+\frac{1}{2}}^-) - 
    \widehat{\boldsymbol{f}_2}_{i,j-\frac{1}{2}} \cdot \boldsymbol{\varphi}(x,y_{i-\frac{1}{2}}^+) \ dx \\
    -&\iint_{\mathcal I_{ij}}  \mathcal{G}_1(\boldsymbol{v})\boldsymbol{v}_x  \cdot \boldsymbol{\varphi}\ dx dy 
      - \int_{y_{j-\frac{1}{2}}}^{y_{j+\frac{1}{2}}} 
        \dfrac{1}{2} \boldsymbol{\varphi}(x_{i+\frac{1}{2}}^-,y) \cdot {\mathscr{g}_1}_{\boldsymbol{\phi}^S,{i+\frac{1}{2},j}} + 
        \dfrac{1}{2} \boldsymbol{\varphi}(x_{i-\frac{1}{2}}^+,y) \cdot {\mathscr{g}_1}_{\boldsymbol{\phi}^S,{i-\frac{1}{2},j}}\ dy \\
    -&\iint_{\mathcal I_{ij}} \mathcal{G}_2(\boldsymbol{v})\boldsymbol{v}_y  \cdot \boldsymbol{\varphi}\ dx dy 
      - \int_{x_{i-\frac{1}{2}}}^{x_{i+\frac{1}{2}}} 
        \dfrac{1}{2} \boldsymbol{\varphi}(x,y_{i+\frac{1}{2}}^-) \cdot {\mathscr{g}_2}_{\boldsymbol{\phi}^S,{i,j+\frac{1}{2}}} + 
        \dfrac{1}{2} \boldsymbol{\varphi}(x,y_{i-\frac{1}{2}}^+) \cdot {\mathscr{g}_2}_{\boldsymbol{\phi}^S,{i,j-\frac{1}{2}}}\ dx.
  \end{aligned}
\end{equation*}
We consistently utilize the Lax-Friedrichs numerical flux for the flux gradient terms $ \widehat{\boldsymbol{f}_1}$ and  $\widehat{\boldsymbol{f}_2}$ throughout our calculations, as it is both consistent and easily implemented. 

We define  the linear path function 
\begin{align}
  \boldsymbol{\phi}^S_{i+\frac{1}{2},j}(\tau) :=
  \boldsymbol{\phi}^S(\tau; \boldsymbol{v}_{i+\frac{1}{2},j}^-,\boldsymbol{v}_{i+\frac{1}{2},j}^+) =
  \boldsymbol{v}_{i+\frac{1}{2},j}^- + \tau(\boldsymbol{v}_{i+\frac{1}{2},j}^+ - \boldsymbol{v}_{i+\frac{1}{2},j}^-), \label{linear_path_y2d} \\
  \boldsymbol{\phi}^S_{i,j+\frac{1}{2}}(\tau) :=
  \boldsymbol{\phi}^S(\tau; \boldsymbol{v}_{i,j+\frac{1}{2}}^-,\boldsymbol{v}_{i,j+\frac{1}{2}}^+) =
  \boldsymbol{v}_{i,j+\frac{1}{2}}^- + \tau(\boldsymbol{v}_{i,j+\frac{1}{2}}^+ - \boldsymbol{v}_{i,j+\frac{1}{2}}^-). \label{linear_path_x2d} 
\end{align}
And the path integral terms are given by
  \begin{align} 
  {\mathscr{g}_1}_{\boldsymbol{\phi}^S,{i+\frac{1}{2},j}} = 
    \int_0^1 \mathcal{G}_1(\boldsymbol{\phi}^S_{i+\frac{1}{2},j}(\tau)) 
    \frac{\partial }{\partial \tau}{\boldsymbol{\phi}^S_{i+\frac{1}{2},j}(\tau)} \ d\tau, \label{path_s12d} \\
  {\mathscr{g}_2}_{\boldsymbol{\phi}^S,{i,j+\frac{1}{2}}} = 
    \int_0^1 \mathcal{G}_2(\boldsymbol{\phi}^S_{i,j+\frac{1}{2}}(\tau)) 
     \frac{\partial }{\partial \tau}
     \boldsymbol{\phi}^S_{i,j+\frac{1}{2}}(\tau)\ d\tau, \label{path_s22d}
  \end{align}
 Similar to the one-dimensional case, obtaining simplification results for applying the aforementioned two paths to (\ref{path_s12d}) and (\ref{path_s22d}) is straightforward. Therefore, for the sake of brevity, we omit the details.

\subsection{Well-balanced property}
We present the still water well-balanced property for the semi-discrete PCDG method in the following theorems.

\begin{theorem}\label{wbs_semi}
  The semi-discrete path-conservative discontinuous Galerkin method (\ref{scheme:2LSWE_still}) and (\ref{scheme:2LSWE_s2d}) is exact for one- and two-dimensional still water equilibrium states (\ref{still_water:2LSWE}).
\end{theorem}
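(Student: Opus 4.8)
The plan is to show that $\text{RHS}_j^S(\boldsymbol{v},b,\boldsymbol{\varphi}) = 0$ for every test function $\boldsymbol{\varphi}\in\boldsymbol{\mathcal{V}}_h^k$ whenever $\boldsymbol{v}$ is the discrete still water state, since then the semi-discrete equation (\ref{scheme:2LSWE_still}) forces $\frac{d}{dt}\int_{\mathcal I_j}\boldsymbol{v}\cdot\boldsymbol{\varphi}\,dx = 0$ and the equilibrium is frozen in time. The starting observation is that at the state (\ref{still_water:2LSWE}) the equilibrium variables $\boldsymbol{v} = (h_1,m_1,w,m_2)^T$ of (\ref{equ_s}) reduce to the constant vector $(h_1,0,w,0)^T$, because $m_1 = h_1 u_1 = 0$ and $m_2 = h_2 u_2 = 0$ while $h_1$ and $w$ are constant. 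Since constants lie in $P^k(\mathcal I_j)$, this state is represented exactly in the DG space, so $\boldsymbol{v}_x\equiv 0$ inside every cell and $\boldsymbol{v}^-_{j+\frac{1}{2}} = \boldsymbol{v}^+_{j+\frac{1}{2}}$ at every interface. I would stress that this is precisely the payoff of the reformulation (\ref{re2LSWE_1d}): the equilibrium is encoded as a constant, so no discrete balancing of nonconstant source terms is ever required.

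Next I would evaluate the flux. Substituting $m_1 = m_2 = 0$ into $\boldsymbol{f}$ annihilates the $m_1^2/h_1$ and $m_2^2/(w-b)$ entries, leaving $\boldsymbol{f}(\boldsymbol{v}) = (0,\tfrac{1}{2}gh_1^2,0,\tfrac{1}{2}gw^2)^T$, which is \emph{constant} in space; note that the potentially troublesome dependence on $b$ through $w-b$ has vanished exactly because the second momentum flux is written in terms of $w$ rather than $h_2$. Because $\boldsymbol{f}$ is constant, the volume flux term in (\ref{rhs_s}) integrates by parts to $\boldsymbol{f}(\boldsymbol{v})\cdot(\boldsymbol{\varphi}^-_{j+\frac{1}{2}}-\boldsymbol{\varphi}^+_{j-\frac{1}{2}})$. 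Since the interface jump $\boldsymbol{v}^+-\boldsymbol{v}^-$ vanishes, the Lax-Friedrichs flux (\ref{LF}) collapses to its central part $\widehat{\boldsymbol{f}}_{j\pm\frac{1}{2}} = \boldsymbol{f}(\boldsymbol{v})$, and the two interface flux contributions in (\ref{rhs_s}) cancel the volume flux term identically.

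It then remains to dispatch the two nonconservative contributions. The volume term $\int_{\mathcal I_j}\mathcal{G}(\boldsymbol{v})\boldsymbol{v}_x\cdot\boldsymbol{\varphi}\,dx$ vanishes immediately because $\boldsymbol{v}_x\equiv 0$. For the path integrals I would invoke the simplified closed forms already recorded for the linear segment path (\ref{linear_path}): the second component $\mathscr{g}^{[2]}_{\boldsymbol{\phi}^S,j+\frac{1}{2}}$ is proportional to the jump $w^+_{j+\frac{1}{2}}-w^-_{j+\frac{1}{2}}$, while the fourth component $\mathscr{g}^{[4]}_{\boldsymbol{\phi}^S,j+\frac{1}{2}}$ is a combination of $w^+-w^-$ and $(h_1)^+-(h_1)^-$; the first and third components vanish since the corresponding rows of $\mathcal{G}$ are zero. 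At the equilibrium state both jumps $w^+-w^-$ and $(h_1)^+-(h_1)^-$ are zero, hence $\mathscr{g}_{\boldsymbol{\phi}^S,j\pm\frac{1}{2}} = \boldsymbol{0}$ and the last two terms of (\ref{rhs_s}) drop out. Collecting the pieces gives $\text{RHS}_j^S = 0$, which establishes the one-dimensional claim.

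The two-dimensional case (\ref{scheme:2LSWE_s2d}) follows by the same mechanism applied direction by direction: at (\ref{still_water:2LSWE}) the variables $\boldsymbol{v} = (h_1,m_1,n_1,w,m_2,n_2)^T$ are again constant (all four discharges vanish), so $\boldsymbol{f}_1,\boldsymbol{f}_2$ are constant, $\boldsymbol{v}_x = \boldsymbol{v}_y = 0$, and the interface jumps vanish; the $x$- and $y$-flux volume integrals cancel their respective Lax-Friedrichs interface integrals, the $\mathcal{G}_1\boldsymbol{v}_x$ and $\mathcal{G}_2\boldsymbol{v}_y$ volume terms vanish, and the path integrals ${\mathscr{g}_1}_{\boldsymbol{\phi}^S}$, ${\mathscr{g}_2}_{\boldsymbol{\phi}^S}$ are once more proportional to the (now vanishing) jumps of $w$ and $h_1$. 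The argument is not deep, but the one step that genuinely carries the result, and that I would verify most carefully, is the exact cancellation between the volume flux term and the interface fluxes: it hinges on $\boldsymbol{f}(\boldsymbol{v})$ being \emph{exactly} constant, which holds only because of the $w$-formulation and because $m_2^2/(w-b)$ is never evaluated as an indeterminate form (its numerator is identically zero at equilibrium). I would also confirm that neither the TVB limiter nor any stage of the SSP-RK update (\ref{3rk_s}) perturbs the constant state, so that the fully discrete scheme inherits the property stage by stage.
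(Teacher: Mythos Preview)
Your argument is correct and follows essentially the same route as the paper's proof: both exploit that the equilibrium variables $\boldsymbol{v}$ are constant so that the Lax--Friedrichs viscosity and the path integrals $\mathscr{g}_{\boldsymbol{\phi}^S}$ vanish, and then show that the remaining flux volume term cancels the interface contributions. The only stylistic difference is that the paper finishes via a single integration by parts and invokes the steady-state identity $\boldsymbol{f}(\boldsymbol{v})_x + \mathcal{G}(\boldsymbol{v})\boldsymbol{v}_x = 0$, whereas you compute $\boldsymbol{f}(\boldsymbol{v})$ explicitly as a constant and cancel terms directly; your closing remarks about the TVB limiter and the SSP--RK stages belong to the fully discrete Theorem~\ref{wbs_fully} rather than to the present semi-discrete statement.
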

\begin{proof}
 We will only prove the case in one dimension, as the two-dimensional situation can be addressed using a similar approach. Assuming that the initial values are in an exact hydrostatic state, i.e. 
 \begin{equation}\label{vpvm}
  {\boldsymbol{v}}(x) = \text{constant} := c \  \Longrightarrow \ {\boldsymbol{v}}_{j+\frac{1}{2}}^- = {\boldsymbol{v}}_{j+\frac{1}{2}}^+.
 \end{equation}
 A straightforward calculation from (\ref{LF}) demonstrates that
 \begin{equation}\label{fh}
  \widehat{\boldsymbol{f}}_{j+\frac{1}{2}} =  \boldsymbol{f}(\boldsymbol{v}_{j+\frac{1}{2}}^-) = \boldsymbol{f}( \boldsymbol{v}_{j+\frac{1}{2}}^+).
 \end{equation}
 The definition of the path integral term (\ref{path_s}) and the linear segment path function (\ref{linear_path}), together with the fact (\ref{vpvm}) yields
 \begin{equation}\label{fg}
  \mathscr{g}_{\boldsymbol{\phi}^{S},{j+\frac{1}{2}}} = 0.
 \end{equation}
 By substituting (\ref{fh}) and (\ref{fg}) into (\ref{rhs_s}), we have
 \begin{align*}
  \text{RHS}_j 
 =&\int_{\mathcal I_j}\boldsymbol{f}(\boldsymbol{v}) \cdot \boldsymbol{\varphi}_x \ dx - 
   \boldsymbol{f}(\boldsymbol{v}_{j+\frac{1}{2}}^-) \cdot \boldsymbol{\varphi}_{j+\frac{1}{2}}^- + 
   \boldsymbol{f}(\boldsymbol{v}_{j-\frac{1}{2}}^+) \cdot \boldsymbol{\varphi}_{j-\frac{1}{2}}^+ -
   \int_{\mathcal I_j} \mathcal{G}(\boldsymbol{v})\boldsymbol{v}_x \cdot \boldsymbol{\varphi}\ dx \\
 =&-\int_{\mathcal I_j}\boldsymbol{f}(\boldsymbol{v})_x \cdot \boldsymbol{\varphi} \ dx -
    \int_{\mathcal I_j}  \mathcal{G}(\boldsymbol{v})\boldsymbol{v}_x \cdot \boldsymbol{\varphi}\ dx \\
 =&-\int_{\mathcal I_j} \left(\boldsymbol{f}(\boldsymbol{v})_x +  \mathcal{G}(\boldsymbol{v})\boldsymbol{v}_x\right) \cdot \boldsymbol{\varphi} \ dx = 0.
 \end{align*}
 The second equality follows from integration by parts. The last equality is owing to the steady state solution. The proof is completed.
\end{proof}

The same property holds for the fully-discrete PCDG scheme. A straightforward induction leads to the following property, and we omit the proof here.

\begin{theorem}\label{wbs_fully}
  The fully-discrete path-conservative discontinuous Galerkin method, space discretization (\ref{scheme:2LSWE_still}) - (\ref{linear_path}) and (\ref{scheme:2LSWE_s2d}) - (\ref{linear_path_x2d}) together with the third-order SSP-RK time discretization (\ref{3rk_s}), is exact for one- and two-dimensional still water equilibrium states (\ref{still_water:2LSWE}).
  \end{theorem}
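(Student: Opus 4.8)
The plan is to leverage Theorem \ref{wbs_semi}, which already guarantees that the spatial operator $\text{RHS}_j^S$ (and its two-dimensional counterpart $\text{RHS}_{ij}^S$) vanishes identically whenever the numerical solution is an exact still water equilibrium, i.e. whenever $\boldsymbol{v}\equiv c$ for a constant $c$. The essential structural feature I would exploit is that the third-order SSP-RK method \eqref{3rk_s} is built from forward-Euler stages combined with coefficients summing to one, and that the bottom topography $b$ is held fixed across all three stages. The proof then reduces to a short induction on the Runge-Kutta stages.

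First I would establish the base case: assume $\boldsymbol{v}^n\equiv c$ is an exact still water equilibrium. By Theorem \ref{wbs_semi}, $\text{RHS}_j^S(\boldsymbol{v}^n,b,\boldsymbol{\varphi})=0$ for every test function $\boldsymbol{\varphi}\in\boldsymbol{\mathcal{V}}_h^k$. Substituting this into the first stage of \eqref{3rk_s} yields $\int_{I_j}\boldsymbol{v}^{(1)}\cdot\boldsymbol{\varphi}\,dx=\int_{I_j}\boldsymbol{v}^n\cdot\boldsymbol{\varphi}\,dx$ for all $\boldsymbol{\varphi}$. Since both $\boldsymbol{v}^{(1)}$ and $\boldsymbol{v}^n$ lie in the finite-dimensional space $\boldsymbol{\mathcal{V}}_h^k$, I would test componentwise against $\boldsymbol{\varphi}=\boldsymbol{e}_i\psi$ for arbitrary scalar $\psi\in\mathcal{W}_h^k$, conclude $\int_{I_j}(v^{(1)}_i-v^n_i)\psi\,dx=0$ for each component $i$, and hence, by invertibility of the local mass matrix (equivalently, uniqueness of the $L^2$ representation on each cell), deduce $\boldsymbol{v}^{(1)}=\boldsymbol{v}^n\equiv c$ cellwise.

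Next, the inductive step merely repeats this reasoning for the remaining stages. With $\boldsymbol{v}^{(1)}\equiv c$ in hand, Theorem \ref{wbs_semi} again gives $\text{RHS}_j^S(\boldsymbol{v}^{(1)},b,\boldsymbol{\varphi})=0$, so the second stage collapses to $\int_{I_j}\boldsymbol{v}^{(2)}\cdot\boldsymbol{\varphi}\,dx=\tfrac{3}{4}\int_{I_j}\boldsymbol{v}^n\cdot\boldsymbol{\varphi}\,dx+\tfrac{1}{4}\int_{I_j}\boldsymbol{v}^{(1)}\cdot\boldsymbol{\varphi}\,dx=\int_{I_j}\boldsymbol{v}^n\cdot\boldsymbol{\varphi}\,dx$, whence $\boldsymbol{v}^{(2)}=\boldsymbol{v}^n\equiv c$ by the same mass-matrix argument. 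A final application to the third stage gives $\boldsymbol{v}^{n+1}\equiv c$, completing the induction. The two-dimensional case is handled identically, with $\text{RHS}_{ij}^S$ replacing $\text{RHS}_j^S$.

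I do not anticipate a genuine obstacle here: the argument is an induction sitting atop the already-proven semi-discrete result. The one point requiring care is the passage from the weak identity $\int_{I_j}(\boldsymbol{v}^{(s)}-\boldsymbol{v}^n)\cdot\boldsymbol{\varphi}\,dx=0$ (for all $\boldsymbol{\varphi}$) to the pointwise equality $\boldsymbol{v}^{(s)}=\boldsymbol{v}^n$; this is valid precisely because both functions lie in the same finite element space, so their difference is orthogonal to itself and must vanish. It is also worth emphasizing that the only property of the time integrator actually used is that it reproduces the equilibrium $\boldsymbol{v}^n$ exactly when every stage residual vanishes, which is ensured by the consistency (unit-sum) structure of the SSP-RK coefficients.
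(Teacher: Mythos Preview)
Your proposal is correct and follows precisely the approach the paper indicates: the paper states that ``a straightforward induction leads to the following property, and we omit the proof here,'' and your argument supplies exactly that induction on the SSP-RK stages, resting on Theorem~\ref{wbs_semi} for the vanishing of $\text{RHS}_j^S$ at each stage. Your explicit attention to the mass-matrix invertibility step is a helpful detail the paper leaves implicit.
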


\section{Moving water equilibria preserving PCDG scheme}\label{se:wb_m}
This section develops a path-conservative discontinuous Galerkin method for the moving water equilibria. Similar to the single-layer shallow water equations, we only discuss the case of one-dimensional since there is no general form of the moving water equilibrium states in two dimensions.

\subsection{Conservative and equilibrium variables}
The scheme designed in the last section for still water preservation cannot be generalized for moving water equilibria, which is significantly more challenging. To maintain the steady state of moving water (\ref{moving_water:2LSWE}), we first follow the same approach used in many path-conservative methods, such as \cite{munoz2011convergence, rhebergen2008discontinuous, diaz2013high}, by adding an additional equation that is trivially satisfied, 
\begin{equation*}
  b_t = 0
\end{equation*} 
to the system (\ref{2LSWE_1d}). Hence, we redefine the conservative variables to be 
$${\boldsymbol u}=(h_1,m_1,h_2,m_2,b)^T,$$ and rewrite the equation (\ref{2LSWE_1d}) as the quasilinear form
\begin{align}\label{re_2LSWE1D_ME}
  \boldsymbol {u}_t + \boldsymbol{f}(\boldsymbol {u})_x +  \mathcal{G}(\boldsymbol {u})\boldsymbol {u}_x =0,
  \end{align}
 with
\begin{equation*}
\scriptsize
  \boldsymbol{f}(\boldsymbol {u}) =
  \left[\begin{array}{c}
      m_1 \\
      \frac{m_1^2}{h_1} +\frac{1}{2}gh_1^2 \\
      m_2 \\
      \frac{m_2^2}{h_2}+\frac{1}{2}gh_2^2 \\
      0
    \end{array}\right], \quad
    \mathcal{G}(\boldsymbol {u}) =
    \left[\begin{array}{ccccc}
      0 & 0 & 0 & 0 & 0 \\
      0 & 0 & gh_1 & 0 & gh_1 \\
      0 & 0 & 0 & 0 & 0 \\
      grh_2 & 0 & 0 & 0 & gh_2 \\
      0 & 0 & 0 & 0 & 0 \\
    \end{array}\right].
\end{equation*}
For many interesting systems of balance laws, in general, the left-hand side of (\ref{re_2LSWE1D_ME}) can be expressed as
\begin{equation}\label{relation}
  \boldsymbol{f}(\boldsymbol {u})_x +  \mathcal{G}(\boldsymbol {u})\boldsymbol {u}_x = \mathcal{L}(\boldsymbol {u})\widetilde{\boldsymbol{v}}(\boldsymbol {u})_x,
\end{equation}
where $\mathcal{L}(\boldsymbol {u})$ is a matrix and $\widetilde{\boldsymbol{v}}(\boldsymbol {u})$ is the vector of equilibrium functions. Here we consider the situation when the set of equilibrium functions $\widetilde{\boldsymbol{v}}(\boldsymbol {u})$ are given by a simple algebraic relation. The caveat is that, generally, $\widetilde{\boldsymbol{v}}$ may be a global (in space) integral quantity which is much more complicated and will be taken into consideration in our future work. Simple calculation yields that the steady state (\ref{moving_water:2LSWE}) satisfies the relation (\ref{relation}), with
$$\scriptsize
\mathcal{L}(\boldsymbol {u}) = \left[\begin{array}{ccccc}
  0 & 1 & 0 & 0 & 0 \\
  h_1 & u_1 & 0 & 0 & 0 \\
  0 & 0 & 0 & 1 & 0 \\
  0 & 0 & h_2 & u_2 & 0 \\
  0 & 0 & 0 & 0 & 1 \\
\end{array}\right]$$ and
\begin{equation*}
\widetilde{\boldsymbol{v}} = (E_1,m_1,E_2,m_2,0)^T 
\end{equation*}
being the moving water equilibrium variables, which are constant at the generalized moving steady state. 

The key ingredient of constructing our well-balanced path-conservative DG method is the transformation between the conservative variables and the equilibrium variables. For the sake of better calculation, we add the bottom function to the equilibrium variables and denote as ${\boldsymbol{v}}=(E_1,m_1, E_2,m_2,b)^T$. On the one hand, we can directly obtain the transform function from conservative variables $\boldsymbol{u}$ to equilibrium variables $\boldsymbol{v}$, which is denoted as
\begin{equation}\label{u_v:2LSWE}
\boldsymbol{v} = \boldsymbol{v}(\boldsymbol {u})=\left(\begin{array}{c}
               E_1 \\
               m_1\\
               E_2\\
               m_2\\
               b
             \end{array}\right) =
             \left(\begin{array}{c}
               \frac{m_1^2}{2h_1^2} + g(h_1 + h_2 + b) \\
               m_1\\
               \frac{m_2^2}{2h_2^2} + g(r h_1 + h_2 + b) \\
               m_2\\
               b
             \end{array}\right).
\end{equation}
On the other hand, given $\boldsymbol{v}$ in advance, the inverse transform function $\boldsymbol{u}(\boldsymbol{v})$ (or simply $h_1$, $h_2$) cannot be directly computed due to the nonlinearity and coupling of the energy $E_1$, $E_2$. We rewrite the expression (\ref{moving_water:2LSWE}) as the following system of two cubic equations:
\begin{equation}\label{v_u:2LSWE}
  \begin{aligned}
    &Q_1 = gh_1^3 + (g(h_2+b)-E_1)h_1^2 + \dfrac{1}{2}m_1^2 = 0, \\
    &Q_2 = gh_2^3 + (g(rh_1+b)-E_2)h_2^2 + \dfrac{1}{2}m_2^2 = 0.
  \end{aligned}
\end{equation}
Different from the single-layer shallow water equations \cite{zhang2023moving}, despite the system consisting of two cubic equations, the upper and lower water heights do not have explicit expressions. Notice that if either $m_1$ or $m_2$ is zero, the system (\ref{v_u:2LSWE}) is significantly simplified. In the general case when $m_1\neq 0$ and $m_2\neq 0$, the system (\ref{v_u:2LSWE}) needs to be solved by a nonlinear iterative solver, such as Newton iteration. Another challenge is that one cannot analyze the properties of (\ref{v_u:2LSWE}), including convexity and monotonicity.
Thus the proper initial guess is important for the convergence to a correct solution, more details will be presented below.

\subsection{The semi-discrete PCDG scheme} 
For the sake of moving equilibrium preserving, we follow the procedure established in \cite{zhang2023moving} to rewrite the system (\ref{re_2LSWE1D_ME}) as
\begin{equation}\label{remov_2LSWE1D}
  \begin{aligned}
    \boldsymbol {u}(\boldsymbol{v})_t + \boldsymbol{f}(\boldsymbol {u}(\boldsymbol{v}))_x +  \mathcal{G}(\boldsymbol {u}(\boldsymbol{v}))\boldsymbol {u}(\boldsymbol{v})_x =0.
  \end{aligned}
\end{equation}
We would like to point out that the form of the equation itself remains unchanged in our calculation; only the conservative variables are handled as nonlinear functions of the equilibrium variables. Hence, instead of seeking an approximation to the conservative variables $\boldsymbol {u}$, we approximate the equilibrium variables $\boldsymbol{v}$ in the DG piecewise polynomial space $\boldsymbol{\mathcal{V}}_h^k$. This marks a significant departure from the traditional high-order well-balanced path-conservative DG methods. 

\subsubsection{PCDG scheme}
Let $\boldsymbol{\phi}^M_{j+\frac{1}{2}}(\tau)$ be a Lipschitz continuous path function in the phase space defined for moving equilibrium preserving PCDG scheme. If we directly extend the approach used in \cite{zhang2023moving} to this system, the resulting method leads to a non-well-balanced scheme. In order to preserve a wider class of moving equilibria exactly, several other techniques are employed.  The new path-conservative DG scheme is  formulated in the following way: 
Find $\boldsymbol{v}\in \boldsymbol{\mathcal{V}}_h^k$, s.t. for any test function $\boldsymbol{\varphi} = ({\varphi}_1,\dots,{\varphi}_5)^T \in \boldsymbol{\mathcal{V}}_h^k$ we have
\begin{equation}\label{scheme:2LSWE_mov}
\dfrac{d}{dt}\int_{\mathcal I_j}\boldsymbol{u}(\boldsymbol{v})\cdot \boldsymbol{\varphi} \ dx = \text{RHS}_j^M(\boldsymbol{v},\boldsymbol{\varphi})
\end{equation}
with
\begin{equation}\label{rhs_m}
  \begin{aligned}
  \text{RHS}_j^M(\boldsymbol{v},\boldsymbol{\varphi})=
  & \int_{\mathcal I_j}\boldsymbol{f}(\boldsymbol{u}(\boldsymbol{v})) \cdot \boldsymbol{\varphi}_x \ dx - 
    \widehat{\boldsymbol{f}}_{j+\frac{1}{2}}^{\text{mod}} \cdot \boldsymbol{\varphi}_{j+\frac{1}{2}}^- + 
    \widehat{\boldsymbol{f}}_{j-\frac{1}{2}}^{\text{mod}} \cdot \boldsymbol{\varphi}_{j-\frac{1}{2}}^+  \\
   -&\int_{\mathcal I_j}  \mathcal{G}(\boldsymbol {u}(\boldsymbol{v}))\boldsymbol {u}(\boldsymbol{v})_x  \cdot \boldsymbol{\varphi}\ dx 
    - \dfrac{1}{2} \boldsymbol{\varphi}_{j+\frac{1}{2}}^-  \cdot \mathscr{g}_{\boldsymbol{\phi}^M,{j+\frac{1}{2}}}
    - \dfrac{1}{2} \boldsymbol{\varphi}_{j-\frac{1}{2}}^+  \cdot \mathscr{g}_{\boldsymbol{\phi}^M,{j-\frac{1}{2}}}.
  \end{aligned}
\end{equation}

\subsubsection{Well-balanced treatment}
{\noindent \textbf{(a) Flux modification}}
\smallskip

Specifically, different from the hydrostatic reconstruction strategy used in \cite{zhang2023moving}, we only modify the Lax-Friedrichs numerical flux as
\begin{equation}\label{LF_modify}
  \begin{aligned}
    \widehat{\boldsymbol{f}}_{j+\frac{1}{2}}^{\text{mod}} =\widehat {\boldsymbol{f}}^{\text{mod}}(\boldsymbol u_{j+\frac{1}{2}}^{-},\boldsymbol u_{j+\frac{1}{2}}^{+}) &= \dfrac{1}{2}\left(\boldsymbol{f}(\boldsymbol u_{j+\frac{1}{2}}^-) + \boldsymbol{f}( \boldsymbol u_{j+\frac{1}{2}}^+)  \right) -\dfrac{\alpha}{2}(\boldsymbol u_{j+\frac{1}{2}}^{*,+} - \boldsymbol u_{j+\frac{1}{2}}^{*,-}),
  \end{aligned}
\end{equation}
with $\alpha$ given by 
\begin{equation}\label{LF_alpha_M}
  \alpha = \max\limits_{\boldsymbol{u}} \{|\lambda_1^M(\boldsymbol{u})|,\dots,|\lambda_5^M(\boldsymbol{u})|\},
\end{equation}
and $\lambda_k^M$ are the eigenvalues of the Jacobi matrix $\frac{\partial \boldsymbol{f}}{\partial \boldsymbol{u}}$.
Herein the boundary values $\boldsymbol u_{j+\frac{1}{2}}^{\pm}$ are acquired from the transform relationship (\ref{v_u:2LSWE}). The modified cell interface values are defined by
\begin{equation}\label{u_modify}
  \boldsymbol{u}_{j+\frac{1}{2}}^{*,\pm} = ((h_1)_{j+\frac{1}{2}}^{*,\pm},(m_1)_{j+\frac{1}{2}}^{\pm},(h_2)_{j+\frac{1}{2}}^{*,\pm},(m_2)_{j+\frac{1}{2}}^{\pm},b_{j+\frac{1}{2}}^{*}),
\end{equation}
with $\left( (h_1)_{j+\frac{1}{2}}^{*,\pm}, (h_2)_{j+\frac{1}{2}}^{*,\pm} \right)$ solved by the following nonlinear systems
\begin{equation}\label{u_cal}
  \begin{aligned}
   & (E_1)_{j+\frac{1}{2}}^{\pm} = \frac{((m_1)_{j+\frac{1}{2}}^{\pm})^2}{2((h_1)_{j+\frac{1}{2}}^{\pm,*})^2} + g((h_1)_{j+\frac{1}{2}}^{\pm,*} + (h_2)_{j+\frac{1}{2}}^{\pm,*} + b_{j+\frac{1}{2}}^{*}),\\
   & (E_2)_{j+\frac{1}{2}}^{\pm} = \frac{((m_2)_{j+\frac{1}{2}}^{\pm})^2}{2((h_2)_{j+\frac{1}{2}}^{\pm,*})^2} + g(r(h_1)_{j+\frac{1}{2}}^{\pm,*} + (h_2)_{j+\frac{1}{2}}^{\pm,*} + b_{j+\frac{1}{2}}^{*}), 
  \end{aligned}
\end{equation}
as well as
\begin{equation}\label{b_modify}
  b_{j+\frac{1}{2}}^{*} = \min{(b_{j+\frac{1}{2}}^{-},b_{j+\frac{1}{2}}^{+})}.
\end{equation}

\bigskip
{\noindent \textbf{(b) Evaluating the path integral term}}
\smallskip

We  define
\begin{equation}\label{linear_path-m}
  \boldsymbol{\phi}^M_{j+\frac{1}{2}}(\tau) :=
  \boldsymbol{\phi}^M(\tau; \boldsymbol{v}_{j+\frac{1}{2}}^-,\boldsymbol{v}_{j+\frac{1}{2}}^+) .
\end{equation}
Moreover, $\mathscr{g}_{\boldsymbol{\phi}^M,{j+\frac{1}{2}}}$ is the approximation of the path integral term
\begin{equation}\label{it_ap}
  \int_0^1 \mathcal{G}(\boldsymbol{\phi}^M_{j+\frac{1}{2}}(\tau))  \frac{\partial }{\partial \tau} \boldsymbol{\phi}^M_{j+\frac{1}{2}}(\tau)\ d\tau. 
\end{equation}
Straightforward evaluation of (\ref{it_ap}) is not capable of exactly preserving the steady state. To this end, we follow \cite{kurganov2023well} to provide a general framework to calculate this term in a well-balanced manner. We can see that at the cell interface $x=x_{j+\frac{1}{2}}$, the relation (\ref{relation}) is still valid and we have
\begin{equation}\label{re_x}
 \frac{\partial }{\partial \tau} \boldsymbol{f}(\boldsymbol{\phi}^M_{j+\frac{1}{2}}(\tau)) +\mathcal{G}(\boldsymbol{\phi}^M_{j+\frac{1}{2}}(\tau))  \frac{\partial }{\partial \tau}\boldsymbol{\phi}^M_{j+\frac{1}{2}}(\tau)  = \mathcal{L}(\boldsymbol{\phi}^M_{j+\frac{1}{2}}(\tau)) \frac{\partial }{\partial \tau}\widetilde{\boldsymbol{v}}(\boldsymbol{\phi}^M_{j+\frac{1}{2}}(\tau)).
\end{equation}
Plugging (\ref{re_x}) into the integral term (\ref{it_ap}), we obtain
\begin{equation}\label{ncp_m}
  \begin{aligned}
    \mathscr{g}_{\boldsymbol{\phi}^M,{j+\frac{1}{2}}} 
    =&\int_0^1 \mathcal{G}(\boldsymbol{\phi}^M_{j+\frac{1}{2}}(\tau)) \frac{\partial }{\partial \tau}  \boldsymbol{\phi}^M_{j+\frac{1}{2}}(\tau) \ d\tau \\
    \overset{(\ref{re_x})}{=} &
    \int_0^1 \mathcal{L}(\boldsymbol{\phi}^M_{j+\frac{1}{2}}(\tau)) \frac{\partial }{\partial \tau} \widetilde{\boldsymbol{v}}(\boldsymbol{\phi}^M_{j+\frac{1}{2}}(\tau)) \ d\tau - 
    \int_0^1 \frac{\partial }{\partial \tau} \boldsymbol{f}(\boldsymbol{\phi}^M_{j+\frac{1}{2}}(\tau)) \ d\tau \\
    =&\int_0^1 \mathcal{L}(\boldsymbol{\phi}^M_{j+\frac{1}{2}}(\tau)) \frac{\partial }{\partial \tau} \widetilde{\boldsymbol{v}}(\boldsymbol{\phi}^M_{j+\frac{1}{2}}(\tau)) \ d\tau - 
   \boldsymbol{f}(\boldsymbol{u}_{j+\frac{1}{2}}^+) + \boldsymbol{f}(\boldsymbol{u}_{j+\frac{1}{2}}^-).
  \end{aligned}
\end{equation}
The left and right states are connected with a linear segment path which is applied to the equilibrium functions
\begin{equation}\label{linear_path_V}
  \boldsymbol{\phi}^M_{j+\frac{1}{2}}(\tau) = \widetilde{\boldsymbol{v}}_{j+\frac{1}{2}}^- + \tau(\widetilde{\boldsymbol{v}}_{j+\frac{1}{2}}^+ - \widetilde{\boldsymbol{v}}_{j+\frac{1}{2}}^-).
\end{equation}
Such choice of the path connecting equilibrium variables (\ref{linear_path_V}) plays an important role in a well-balanced manner. Based on it, the integral term in the last line in (\ref{ncp_m}) will reduced to 
  \begin{equation*}
    \int_0^1 \mathcal{L}(\boldsymbol{\phi}^M_{j+\frac{1}{2}}(\tau)) \left(\widetilde{\boldsymbol{v}}_{j+\frac{1}{2}}^{+} - \widetilde{\boldsymbol{v}}_{j+\frac{1}{2}}^{-}\right)\ d\tau,
  \end{equation*}
  which is evaluated numerically, e.g., by the Simpson formula.
  
\bigskip
{\noindent \textbf{(c) Evaluating the integral of the coupling term}}
\smallskip

To ensure well-balanced preservation, another critical component is the evaluation of the integral of the nonconservative product term
  \begin{equation*}
    \int_{\mathcal I_j}  \mathcal{G}(\boldsymbol {u}(\boldsymbol{v}))\boldsymbol {u}(\boldsymbol{v})_x \cdot \boldsymbol{\varphi}\ dx,
  \end{equation*}
  which contains the coupling terms
  \begin{equation*}
    \int_{\mathcal I_j} -g h_1(\boldsymbol{v})(h_2(\boldsymbol{v}))_x, \quad \int_{\mathcal I_j} -gr h_2(\boldsymbol{v})(h_1(\boldsymbol{v}))_x.
  \end{equation*} 
  Since our computational variables are equilibrium variables $\boldsymbol{v}$, the conservative variables are non-polynomial functions of the equilibrium variables. Briefly speaking, $\boldsymbol{v}\in\boldsymbol{\mathcal{V}}_h^k$ while $\boldsymbol{u}\notin\boldsymbol{\mathcal{V}}_h^k$, so that there is no explicit expression for $(h_1)_x$ or $(h_2)_x$. Take $(h_1)_x$ as an example, chain rule gives 
  \begin{equation*}
    \dfrac{\partial h_1}{\partial x} = \dfrac{\partial h_1}{\partial \boldsymbol{v}}\dfrac{\partial \boldsymbol{v}}{\partial x} = 
    \dfrac{\partial h_1}{\partial E_1}\dfrac{\partial E_1}{\partial x} + 
    \dfrac{\partial h_1}{\partial m_1}\dfrac{\partial m_1}{\partial x} + 
    \dfrac{\partial h_1}{\partial E_2}\dfrac{\partial E_2}{\partial x} + 
    \dfrac{\partial h_1}{\partial m_2}\dfrac{\partial m_2}{\partial x} + 
    \dfrac{\partial h_1}{\partial b}\dfrac{\partial b}{\partial x}.
  \end{equation*} 
  The derivatives $\frac{\partial \boldsymbol{v}}{\partial x}$ can be obtained directly and $\frac{\partial h_1}{\partial \boldsymbol{v}}$, $\frac{\partial h_2}{\partial \boldsymbol{v}}$ are calculated by implicit function theorem. Specifically, 
  \begin{equation*}
    \left[ \begin{array}{c}
      \frac{\partial h_1}{\partial E_1}\\
      \\
      \frac{\partial h_2}{\partial E_1} 
      \end{array}\right] = 
    -\left[\begin{array}{cc}
      \frac{\partial Q_1}{\partial h_1} & \frac{\partial Q_1}{\partial h_2}\\
      \\
      \frac{\partial Q_2}{\partial h_1} & \frac{\partial Q_2}{\partial h_2}
    \end{array} \right]^{-1}
    \left[ \begin{array}{c}
      \frac{\partial Q_1}{\partial E_1}\\
      \\
      \frac{\partial Q_2}{\partial E_1} 
      \end{array}\right], 
  \end{equation*}
  where $Q_1,Q_2$ are defined in (\ref{v_u:2LSWE}), and the others can be acquired similarly. 

\subsection{The fully-discrete PCDG scheme}
The full-discrete scheme is obtained by applying the third-order SSP-RK time discretization for \eqref{scheme:2LSWE_mov},
\begin{equation}\label{3rk_m}\footnotesize
  \begin{aligned}
  & \int_{I_j} \boldsymbol{u}(\boldsymbol{v}^{(1)}) \cdot \boldsymbol{\varphi} \ dx 
  = \int_{I_j} \boldsymbol{u}(\boldsymbol{v}^{n}) \cdot \boldsymbol{\varphi} \ dx 
  + \Delta t  \ \text{RHS}_j^M(\boldsymbol{v}^n,\boldsymbol{\varphi}), \\
  & \int_{I_j} \boldsymbol{u}(\boldsymbol{v}^{(2)}) \cdot \boldsymbol{\varphi} \ dx 
  = \dfrac{3}{4}\int_{I_j} \boldsymbol{u}(\boldsymbol{v}^{n}) \cdot \boldsymbol{\varphi} \ dx 
  + \dfrac{1}{4}\left(\int_{I_j} \boldsymbol{u}(\boldsymbol{v}^{(1)}) \cdot \boldsymbol{\varphi} \ dx 
  + \Delta t  \ \text{RHS}_j^M(\boldsymbol{v}^{(1)},\boldsymbol{\varphi}) \right),\\
  & \int_{I_j} \boldsymbol{u}(\boldsymbol{v}^{n+1}) \cdot \boldsymbol{\varphi} \ dx 
  = \dfrac{1}{3}\int_{I_j} \boldsymbol{u}(\boldsymbol{v}^{n}) \cdot \boldsymbol{\varphi} \ dx 
  + \dfrac{2}{3}\left(\int_{I_j} \boldsymbol{u}(\boldsymbol{v}^{(2)}) \cdot \boldsymbol{\varphi} \ dx 
  + \Delta t  \ \text{RHS}_j^M(\boldsymbol{v}^{(2)},\boldsymbol{\varphi}) \right),
  \end{aligned}
\end{equation}
where $\text{RHS}_j^M(\boldsymbol{v}^n,\boldsymbol{\varphi})$ is the spatial operator in (\ref{rhs_m}). 
Slight deviations from the fully discrete scheme for still water equilibria, (\ref{3rk_m}) entail nonlinear systems of equations at each inner stage of the Runge-Kutta time stepping,  given that the solutions are expressed in terms of the equilibrium variables $\boldsymbol{v}$ which are nonlinear with respect to $\boldsymbol{u}$. Thus, we adopt Newton's method for solving (\ref{3rk_m}) locally in each cell. 

\subsubsection{Newton's method}
The SSP-RK method can be seen as a convex combination of the forward Euler method. In order to illustrate the process of Newton iteration, we will use the forward Euler method as an example, as shown in the first equation in (\ref{3rk_m}),
\begin{equation}\label{1rk_ncp}
 \int_{I_j} \boldsymbol{u}(\boldsymbol{v}^{n+1}) \cdot \boldsymbol{\varphi} \ dx = \mathcal{R}_{j}(\boldsymbol{v}^{n},\boldsymbol{\varphi})
\end{equation}
with
\begin{equation}\label{res_ncp}
 \mathcal{R}_{j}(\boldsymbol{v}^{n},\boldsymbol{\varphi}) = \int_{I_j} \boldsymbol{u}(\boldsymbol{v}^{n}) \cdot \boldsymbol{\varphi} \ dx + \Delta t  \ \text{RHS}_j^M(\boldsymbol{v}^n,\boldsymbol{\varphi}).
\end{equation}
We denote $\mathcal{R}_{j}^{[l]} = \mathcal{R}_{j}(\boldsymbol{v}^{n},{\varphi}_l \boldsymbol {e}_l)$ as the right hand side of equations satisfied by the $l$-th equilibrium variable to be solved, here 
$\boldsymbol {e}_l:=(0,\dots,0,1,0,\dots,0)^T$
represents the unit vector with 1 in the $l$-th position and 0 in the rest. 
We first note that despite the bottom function $b$ being added in our moving equilibrium variables, a trivial calculation gives $ \mathcal{R}_{j}^{[5]}$ equals to zero so that no extra numerical diffusion present in our proposed PCDG scheme and the computed $b$ can remain time-independent. We still denote it as $b$ for ease of notation. Next, the specific mass flow rate $m_1,m_2$ defined in equilibrium variables can be acquired directly from the second and fourth components of (\ref{res_ncp}). The unknowns is the coefficients of the polynomial $E_1^{n+1}, E_2^{n+1} \in \mathcal{W}_h^k$, which are obtained by solving the following nonlinear systems iteratively 
\begin{equation}\label{newiter_ncp}\small
\begin{aligned}
& H_1(E_1^1,\dots,E_1^k,E_2^1,\dots,E_2^k)=\displaystyle{\int_{I_j} h_1\left(\sum _{i=1}^k E_1^i \varphi_1^i,\sum _{i=1}^k E_2^i \varphi_3^i,m_1^{n+1},m_2^{n+1},b \right) {\varphi}_1 \ dx} - \mathcal{R}_{j}^{[1]}=0,\\
& H_2(E_1^1,\dots,E_1^k,E_2^1,\dots,E_2^k)=\displaystyle{\int_{I_j} h_2\left(\sum _{i=1}^k E_1^i \varphi_1^i,\sum _{i=1}^k E_2^i \varphi_3^i,m_1^{n+1},m_2^{n+1},b \right) {\varphi}_3 \ dx} - \mathcal{R}_{j}^{[3]}=0.
\end{aligned}
\end{equation}
For more details and a good initial guess from which Newton iteration of the systems (\ref{newiter_ncp}) starts, we refer to \cite{zhang2023moving} for a thorough discussion.

\begin{remark}
  We return to the initial guesses for the Newton solver of (\ref{v_u:2LSWE}). When solving (\ref{1rk_ncp}), we can also acquire 
  $$\widetilde {\boldsymbol{u}}(x) = (\widetilde h_1(x),m_1(x),\widetilde h_2(x), m_2(x),b(x))  \in \boldsymbol{\mathcal{V}}_h^k$$ directly from the equation 
  $$\int_{I_j} \widetilde{\boldsymbol{u}}(x) \cdot \boldsymbol{\varphi} \ dx = \mathcal{R}_j(\boldsymbol{v}^{n},\boldsymbol{\varphi}).$$
  Then the obtained polynomials $(\widetilde h_1,\widetilde h_2)$ are chosen as the proper initial guesses of the Newton iteration for solving $(h_1,h_2)$. We emphasize that this option might not be optimal, but our numerical tests suggest that it is robust for small perturbations at or close to steady flows.
\end{remark}

\subsubsection{Slope limiter}
To mitigate spurious numerical oscillations caused by shock waves, a post-processing procedure is necessary. In this paper, we implement the TVB limiter after each stage of the SSP-RK method. To avoid disrupting the generalized moving equilibrium state, as demonstrated in \cite{zhang2023moving}, we modify the limiter procedure specifically on the local characteristic fields of the equilibrium functions 
\begin{equation}\label{v_e}
\boldsymbol{v}^e  = (E_1,m_1,E_2,m_2)^T,
\end{equation}
while keeping the bottom function $b$ unchanged. The corresponding eigenstructures are detailed in Appendix \ref{a2}.

The TVB limiter procedure consists of two main parts. Initially, we assess the necessity of the limiter in each cell based on the equilibrium functions $\boldsymbol{v}^e$ within the DG piecewise polynomial space at each time step. Subsequently, we apply the limiter exclusively to those troubled cells on the polynomials $\boldsymbol{v}^e({x})$. When the steady state $\boldsymbol{v}^e=\text{constant}$ is reached, it signifies that no limiting is required. This approach ensures that the limiting procedure does not compromise the well-balanced property of the PCDG scheme.

\subsection{Well-balanced property}
The moving water well-balanced properties for both the semi-discrete and fully-discrete PCDG methods are outlined in the following theorems.
\begin{theorem}\label{wbp_semi}
  The semi-discrete path-conservative discontinuous Galerkin method (\ref{scheme:2LSWE_mov}) - (\ref{rhs_m}) with the special modified numerical flux (\ref{LF_modify}) and (\ref{ncp_m}) is exact for one-dimensional moving equilibrium state (\ref{moving_water:2LSWE}).
\end{theorem}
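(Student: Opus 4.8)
The plan is to show that the spatial residual $\text{RHS}_j^M(\boldsymbol{v},\boldsymbol{\varphi})$ in \eqref{rhs_m} vanishes for every test function $\boldsymbol{\varphi}\in\boldsymbol{\mathcal{V}}_h^k$ whenever the discrete solution sits exactly at the moving equilibrium \eqref{moving_water:2LSWE}. At such a state the equilibrium functions $\widetilde{\boldsymbol{v}}=(E_1,m_1,E_2,m_2,0)^T$ are globally constant, so their traces satisfy $\widetilde{\boldsymbol{v}}_{j+\frac12}^-=\widetilde{\boldsymbol{v}}_{j+\frac12}^+$ at every interface and $\widetilde{\boldsymbol{v}}_x\equiv 0$ inside each cell; by contrast the conservative trace $\boldsymbol{u}^{\pm}$ recovered from \eqref{v_u:2LSWE} may still jump across $x_{j+\frac12}$ because the bottom $b$ is generally discontinuous there. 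I would organize the proof by splitting $\text{RHS}_j^M$ into a volume part and an interface part and showing that each collapses, the interface cancellation being the substantive step.

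For the volume part I would first integrate the flux term $\int_{\mathcal I_j}\boldsymbol{f}(\boldsymbol{u}(\boldsymbol{v}))\cdot\boldsymbol{\varphi}_x\,dx$ by parts to expose the boundary traces $\boldsymbol{f}(\boldsymbol{u}_{j+\frac12}^-)\cdot\boldsymbol{\varphi}_{j+\frac12}^- - \boldsymbol{f}(\boldsymbol{u}_{j-\frac12}^+)\cdot\boldsymbol{\varphi}_{j-\frac12}^+$ together with $-\int_{\mathcal I_j}\boldsymbol{f}(\boldsymbol{u}(\boldsymbol{v}))_x\cdot\boldsymbol{\varphi}\,dx$. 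Adding the remaining volume nonconservative integral $-\int_{\mathcal I_j}\mathcal{G}(\boldsymbol{u}(\boldsymbol{v}))\boldsymbol{u}(\boldsymbol{v})_x\cdot\boldsymbol{\varphi}\,dx$ and invoking the structural relation \eqref{relation}, the two interior integrals merge into $-\int_{\mathcal I_j}\mathcal{L}(\boldsymbol{u}(\boldsymbol{v}))\,\widetilde{\boldsymbol{v}}(\boldsymbol{u}(\boldsymbol{v}))_x\cdot\boldsymbol{\varphi}\,dx$, which is identically zero since $\widetilde{\boldsymbol{v}}_x\equiv 0$ at equilibrium. Hence the whole volume contribution reduces to the two boundary traces above.

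It then remains to match these boundary traces against the interface flux and path-integral terms. Here I would first verify that the Lax--Friedrichs dissipation in the modified flux \eqref{LF_modify} drops out: since $E_1,E_2,m_1,m_2$ agree on the two sides and the reconstructed bottom $b^*_{j+\frac12}=\min(b^-,b^+)$ in \eqref{b_modify} is shared by both, the nonlinear systems \eqref{u_cal} defining the starred heights are identical for the $-$ and $+$ traces, giving $\boldsymbol{u}_{j+\frac12}^{*,-}=\boldsymbol{u}_{j+\frac12}^{*,+}$ and therefore $\widehat{\boldsymbol{f}}_{j+\frac12}^{\text{mod}}=\tfrac12\big(\boldsymbol{f}(\boldsymbol{u}_{j+\frac12}^-)+\boldsymbol{f}(\boldsymbol{u}_{j+\frac12}^+)\big)$. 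Next I would evaluate the path integral through the rewritten identity \eqref{ncp_m}: under the linear segment path \eqref{linear_path_V} carried on the equilibrium functions, the integral term on its last line reduces to $\int_0^1\mathcal{L}(\boldsymbol{\phi}^M_{j+\frac12}(\tau))\,(\widetilde{\boldsymbol{v}}_{j+\frac12}^+-\widetilde{\boldsymbol{v}}_{j+\frac12}^-)\,d\tau$, which vanishes because $\widetilde{\boldsymbol{v}}_{j+\frac12}^+=\widetilde{\boldsymbol{v}}_{j+\frac12}^-$, leaving exactly $\mathscr{g}_{\boldsymbol{\phi}^M,{j+\frac12}}=\boldsymbol{f}(\boldsymbol{u}_{j+\frac12}^-)-\boldsymbol{f}(\boldsymbol{u}_{j+\frac12}^+)$. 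Substituting these two facts, the coefficient of $\boldsymbol{\varphi}_{j+\frac12}^-$ collapses to $\boldsymbol{f}(\boldsymbol{u}^-)-\tfrac12(\boldsymbol{f}(\boldsymbol{u}^-)+\boldsymbol{f}(\boldsymbol{u}^+))-\tfrac12(\boldsymbol{f}(\boldsymbol{u}^-)-\boldsymbol{f}(\boldsymbol{u}^+))=0$, and the mirror computation at $x_{j-\frac12}$ annihilates the coefficient of $\boldsymbol{\varphi}_{j-\frac12}^+$; hence $\text{RHS}_j^M=0$.

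The delicate point, and the step I expect to require the most care, is the interface identity $\mathscr{g}_{\boldsymbol{\phi}^M,{j+\frac12}}=\boldsymbol{f}(\boldsymbol{u}_{j+\frac12}^-)-\boldsymbol{f}(\boldsymbol{u}_{j+\frac12}^+)$. At a moving-water state over a discontinuous bottom the physical fluxes genuinely disagree across the interface, so this cancellation is not automatic; it hinges on two design choices acting in concert — rewriting the nonconservative path integral via \eqref{ncp_m} so that the flux jump is produced exactly, and threading the linear path through the equilibrium variables $\widetilde{\boldsymbol{v}}$ rather than the conservative variables so that the residual $\mathcal{L}$-integral is proportional to $\widetilde{\boldsymbol{v}}^+-\widetilde{\boldsymbol{v}}^-$ and hence null. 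I would also be careful that the starred-height solve \eqref{u_cal} selects a unique branch, since the identity $\boldsymbol{u}_{j+\frac12}^{*,-}=\boldsymbol{u}_{j+\frac12}^{*,+}$, and with it the vanishing of the numerical dissipation, relies on the two identical nonlinear systems returning the same root.
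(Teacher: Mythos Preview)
Your argument is correct and follows essentially the same route as the paper: both proofs show that the linear path on the equilibrium functions kills the $\mathcal{L}$-integral in \eqref{ncp_m}, that the shared starred reconstruction $\boldsymbol{u}^{*,-}=\boldsymbol{u}^{*,+}$ annihilates the Lax--Friedrichs dissipation, and that integration by parts together with the steady-state identity $\boldsymbol{f}_x+\mathcal{G}\boldsymbol{u}_x=\mathcal{L}\widetilde{\boldsymbol{v}}_x=0$ handles the rest. The only cosmetic difference is ordering --- you integrate by parts first and then invoke \eqref{relation} on the merged volume integral, whereas the paper substitutes the interface identities first and integrates by parts last --- and your explicit remark that the vanishing dissipation tacitly assumes the two identical nonlinear systems \eqref{u_cal} return the same root is a point the paper leaves implicit.
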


\begin{proof}
  We suppose that the initial values are in an exact moving equilibrium state, that is $\widetilde{\boldsymbol{v}}(x) = \text{constant} := c$. It is straightforward that the approximation denoted by $\widetilde{\boldsymbol{v}}(x)$ again is constant and equals $c$ as well. The fact that $\widetilde{\boldsymbol{v}}_{j+\frac{1}{2}}^- = \widetilde{\boldsymbol{v}}_{j+\frac{1}{2}}^+$ coupled with linear segment path (\ref{linear_path_V}) yields the integral in (\ref{rhs_m})
  \begin{equation}\label{int}
    \int_0^1 \mathcal{L}(\boldsymbol{\phi}^M_{j\pm\frac{1}{2}}(\tau)) \frac{\partial }{\partial \tau}
    \widetilde{\boldsymbol{v}}(\boldsymbol{\phi}^M_{j\pm\frac{1}{2}}(\tau)) \ d\tau = 0.
  \end{equation}
  According to the definition (\ref{u_modify}), we immediately obtain $\boldsymbol{u}_{j+\frac{1}{2}}^{*,-} = \boldsymbol{u}_{j+\frac{1}{2}}^{*,+}$, and (\ref{LF_modify}) reduces to
  \begin{equation}\label{fwb_ncp}
    \widehat{\boldsymbol{f}}_{j+\frac{1}{2}}^{\text{mod}} = \dfrac{1}{2}\left(\boldsymbol{f}(\boldsymbol u_{j+\frac{1}{2}}^-) + \boldsymbol{f}( \boldsymbol u_{j+\frac{1}{2}}^+) \right).
  \end{equation}
  Hence substituting (\ref{int}) and (\ref{fwb_ncp}) back into (\ref{rhs_m}), we have
  \begin{equation*}
  \begin{aligned}
  \text{RHS}_{j}^M
  = &\int_{\mathcal I_j}\boldsymbol{f}(\boldsymbol{u}(\boldsymbol{v})) \cdot \boldsymbol{\varphi}_x \ dx 
    -\int_{\mathcal I_j}  \mathcal{G}(\boldsymbol {u}(\boldsymbol{v})) \boldsymbol {u}(\boldsymbol{v})_x \cdot \boldsymbol{\varphi}\ dx \\
    & -\dfrac{1}{2}\left(\boldsymbol{f}(\boldsymbol u_{j+\frac{1}{2}}^-) + \boldsymbol{f}( \boldsymbol u_{j+\frac{1}{2}}^+) \right)  \cdot \boldsymbol{\varphi}_{j+\frac{1}{2}}^- + 
     \dfrac{1}{2}\left(\boldsymbol{f}(\boldsymbol u_{j-\frac{1}{2}}^-) + \boldsymbol{f}( \boldsymbol u_{j-\frac{1}{2}}^+) \right)  \cdot \boldsymbol{\varphi}_{j-\frac{1}{2}}^+ \\
    & - \dfrac{1}{2} \boldsymbol{\varphi}_{j+\frac{1}{2}}^-  \cdot \left( - \boldsymbol{f}(\boldsymbol{u}_{j+\frac{1}{2}}^+) + \boldsymbol{f} (\boldsymbol{u}_{j+\frac{1}{2}}^-)\right)  - 
     \dfrac{1}{2} \boldsymbol{\varphi}_{j-\frac{1}{2}}^+  \cdot \left( - \boldsymbol{f}(\boldsymbol{u}_{j-\frac{1}{2}}^+) + \boldsymbol{f}(\boldsymbol{u}_{j-\frac{1}{2}}^-)\right)\\
  = & \int_{I_j}\boldsymbol{f}(\boldsymbol{u}(\boldsymbol{v})) \cdot \boldsymbol{\varphi}_x \ dx -
      \boldsymbol{f}(\boldsymbol{u}_{j+\frac{1}{2}}^-) \cdot \boldsymbol{\varphi}_{j+\frac{1}{2}}^- + 
      \boldsymbol{f}(\boldsymbol{u}_{j-\frac{1}{2}}^+) \cdot  \boldsymbol{\varphi}_{j-\frac{1}{2}}^+
      -\int_{\mathcal I_j}  \mathcal{G}(\boldsymbol {u}(\boldsymbol{v}))\boldsymbol {u}(\boldsymbol{v})_x \cdot \boldsymbol{\varphi}\ dx  \\
  = & -\int_{I_j} \partial_x\boldsymbol{f}(\boldsymbol{u}(\boldsymbol{v})) \cdot \boldsymbol{\varphi} \ dx -
      \int_{\mathcal I_j}  \mathcal{G}(\boldsymbol {u}(\boldsymbol{v}))\boldsymbol {u}(\boldsymbol{v})_x \cdot \boldsymbol{\varphi}\ dx  \\
  = & -\int_{I_j} \left(\partial_x\boldsymbol{f}(\boldsymbol{u}(\boldsymbol{v})) +  \mathcal{G}(\boldsymbol {u}(\boldsymbol{v}))\boldsymbol {u}(\boldsymbol{v})_x \right) \cdot \boldsymbol{\varphi} \ dx=0.
  \end{aligned}
  \end{equation*}
  The last equality follows from the equilibrium state solution.  This completes the proof of well-balanced property.
  \end{proof}

For the fully-discrete scheme, the well-balanced property is held through a straightforward induction.
\begin{theorem}\label{wbp_fully}
The fully-discrete path-conservative discontinuous Galerkin method, space discretization (\ref{scheme:2LSWE_mov}) - (\ref{linear_path_V}) together with the third-order SSP-RK time discretization (\ref{3rk_m}), is exact for one-dimensional moving equilibrium state (\ref{moving_water:2LSWE}).
\end{theorem}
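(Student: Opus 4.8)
The plan is to leverage the semi-discrete result of Theorem \ref{wbp_semi} together with the structure of the SSP-RK integrator. Since the third-order scheme (\ref{3rk_m}) is a convex combination of forward Euler stages, it suffices to show that one forward Euler update preserves the exact moving equilibrium; the full statement then follows by a short induction over the three stages.

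First I would assume the solution at time level $n$ lies in the exact moving equilibrium (\ref{moving_water:2LSWE}), so that the equilibrium variables $\widetilde{\boldsymbol{v}}^n$ are globally constant. By Theorem \ref{wbp_semi}, the spatial operator vanishes, $\text{RHS}_j^M(\boldsymbol{v}^n,\boldsymbol{\varphi})=0$ for every $\boldsymbol{\varphi}\in\boldsymbol{\mathcal{V}}_h^k$. The first stage (\ref{1rk_ncp}) therefore collapses to $\int_{I_j}\boldsymbol{u}(\boldsymbol{v}^{(1)})\cdot\boldsymbol{\varphi}\ dx=\int_{I_j}\boldsymbol{u}(\boldsymbol{v}^n)\cdot\boldsymbol{\varphi}\ dx$, and I would verify that the constant state $\boldsymbol{v}^{(1)}=\boldsymbol{v}^n$ solves it: the identity $\mathcal{R}_j^{[5]}=0$ leaves $b$ unchanged, the second and fourth components yield $m_1^{(1)}=m_1^n$ and $m_2^{(1)}=m_2^n$ by uniqueness of the polynomial representation, and the constant energies $E_1^{(1)}=E_1^n$, $E_2^{(1)}=E_2^n$ satisfy the nonlinear system (\ref{newiter_ncp}) trivially, since evaluating $h_1,h_2$ at the constant argument $\boldsymbol{v}^n$ reproduces exactly the right-hand sides $\mathcal{R}_j^{[1]},\mathcal{R}_j^{[3]}$.

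With the Euler stage established, the induction is immediate: once $\boldsymbol{v}^{(1)}=\boldsymbol{v}^n$ is again at equilibrium, Theorem \ref{wbp_semi} forces $\text{RHS}_j^M(\boldsymbol{v}^{(1)},\boldsymbol{\varphi})=0$, so the second stage of (\ref{3rk_m}) reduces to a convex combination of the identical projections $\int_{I_j}\boldsymbol{u}(\boldsymbol{v}^n)\cdot\boldsymbol{\varphi}\ dx$, giving $\boldsymbol{v}^{(2)}=\boldsymbol{v}^n$; the same reasoning applied to the third stage yields $\boldsymbol{v}^{n+1}=\boldsymbol{v}^n$, which is the claimed exactness.

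The main obstacle is not the algebra but arguing that Newton's method actually returns this constant root rather than a spurious branch of (\ref{newiter_ncp}). I would settle this by invoking the initial-guess construction of the preceding Remark: at equilibrium the directly computed polynomial $\widetilde{\boldsymbol{u}}(x)$ coincides with the exact constant state, so the iteration is started at the fixed point and does not move. Combined with the nonsingularity of the $(Q_1,Q_2)$ Jacobian with respect to $(h_1,h_2)$, already required for the implicit-function evaluation of $\partial h_i/\partial\boldsymbol{v}$, this guarantees that the equilibrium is an isolated root and is the one recovered, closing the induction.
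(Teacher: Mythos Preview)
Your proposal is correct and follows exactly the approach the paper indicates: the paper states that ``the well-balanced property is held through a straightforward induction'' and omits the details, while you spell out that induction over the SSP-RK stages using Theorem~\ref{wbp_semi}. Your additional care in arguing that the Newton solver actually recovers the constant equilibrium root (via the initial-guess Remark) goes beyond what the paper makes explicit, but is a reasonable point to address and does not deviate from the intended argument.
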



The idea of the change of variables was also adopted by Hughes et.al. \cite{hughes1986new}, where the entropy variables were utilized for deriving symmetric forms of the Euler and Navier-Stokes equations. The resulting finite element methods satisfied the second law of thermodynamics and a discrete entropy inequality thanks to the definition of the entropy variables. 
Additionally, Mantri et.al. \cite{mantri2021well} developed a well-balanced DG scheme for 2$\times$2 hyperbolic balance law by solving a linear system of equations for the equilibrium variables given by the global fluxes. However, the nonconservative form of the equations for $\boldsymbol{v}_t$ obtained by chain rule yields the loss of mass conservation. 

In the moving equilibrium-preserving path-conservative discontinuous Galerkin method for the two-layer system, the well-balanced property is established through a reformulation of the DG piecewise polynomial space. This reformulation is carried out in terms of equilibrium variables rather than conservative variables, with a crucial emphasis on maintaining the original formulation of the target system. Here, conservative variables are treated exclusively as non-polynomial functions of the equilibrium variables. This strategic approach facilitates the element-by-element evaluation of nonlinear iterations, ensuring mass conservation throughout our calculations. 

 It's important to acknowledge that, due to a lack of rigorous analysis regarding the convexity and monotonicity of the two coupled cubic equations (\ref{v_u:2LSWE}), the Newton solver may face challenges in converging to a correct solution, especially in the presence of strong shock discontinuities and flows far from equilibrium. Our primary focus is directed towards evaluating the performance of our developed approach in scenarios involving nearly steady-state flow with relatively coarse meshes, as elaborated in Section \ref{se:nu}. Notably, the tolerance in the Newton solver for (\ref{v_u:2LSWE}) and (\ref{newiter_ncp}) is set at $\text{TOL} = 10^{-13}$.

\section{Numerical examples}\label{se:nu}
This section presents the implementation of our proposed path-conservative discontinuous Galerkin methods for two-layer shallow water equations, preserving still water and moving water equilibria. The tested schemes will be referred to as PCDG-still and PCDG-moving, respectively. We also compare their performance with that of the corresponding second-order central-upwind (CU) scheme developed by Kurganov and Petrova \cite{kurganov2009central} and path-conservative central-upwind (PCCU) scheme developed by Castro D{\'\i}az et al. \cite{diaz2019path} on several numerical examples.

Unless otherwise specified, the piecewise quadratic ($P^2$) polynomial basis is adopted in the spatial discretization. In all numerical experiments, we set the CFL number as 0.18 for one-dimensional cases and 0.1 for two-dimensional cases. 
The time step constraint is acquired by
$$\Delta t = \text{CFL}\frac{ h_m }{\max_{l,\boldsymbol u}{|\lambda_l(\boldsymbol u)|}},$$ 
where $h_m$ denotes the mesh size. 
The constant in the TVB slope limiter is set as 0, except for the accuracy tests in which no slope limiter is applied. Without further specification, free boundary conditions are imposed in the following examples.

\subsection{One-dimensional tests}
\begin{example}{\bf Accuracy test}\label{accuracy1D}
\end{example}
To verify the high-order accuracy of our developed well-balanced PCDG schemes for a smooth solution, we choose the same bottom topography and initial conditions as in \cite{kurganov2009central}
\begin{equation}\label{inismoo}
  \begin{aligned}
  &b(x) = \sin^2(\pi x)-10,\quad h_1(x,0)=5+e^{\cos(2\pi x)}, \\
  &w(x,0) = -5-e^{\cos(2\pi x)}, \quad m_1(x,0)= m_2(x,0) = 0,
  \end{aligned}
\end{equation}
and 1-periodic boundary conditions. The gravitation constant $g$ is taken as $9.81$ and the density ratio is $r=0.98$. We run the simulations up to $t=0.1$, by which the exact solution remains smooth. We treat the solution obtained by our proposed third-order PCDG-still method with 12800 cells over the periodic domain as the reference solution. Both the piecewise linear and quadratic polynomial basis are used for our test. The $L^1$ errors of the upper and lower water height $h_1$, $h_2$ coupled with the discharge $m_1$, $m_2$ and numerical orders of accuracy for both the PCDG-still and PCDG-moving methods are listed in Table \ref{DGacc1D_still} and Table \ref{DGacc1D_mov} respectively, where we can observe the optimal convergence rates.

\begin{table}[!htb]
  \centering
  \caption{Example \ref{accuracy1D}:  $L^1$ errors and numerical orders of accuracy with initial conditions (\ref{inismoo}), using PCDG-still method.}\label{DGacc1D_still}
  \begin{tabular}{c c c c c c c c c c }
  \toprule
   \multirow{2}{*}  & \multirow{2}{*} {$nx$} & \multicolumn{2}{c}{$h_1$}&\multicolumn{2}{c}{$ m_1 $}&\multicolumn{2}{c}{$ h_2 $}& \multicolumn{2}{c}{$m_2$}\\
   \cmidrule(lr){3-4} \cmidrule(lr){5-6} \cmidrule(lr){7-8} \cmidrule(lr){9-10}
   ~ & ~ &$L^1$ error &order&$L^1$ error &order&$L^1$ error &order&$L^1$ error &order\\
   \midrule
   \multirow{7}{*}{$P^1$}&25   & 1.62E-03 &   -- & 6.97E-04 &   -- & 1.15E-03 &   -- & 6.42E-04 &   -- \\
                        ~&50   & 3.94E-04 & 2.04 & 1.66E-04 & 2.07 & 2.74E-04 & 2.07 & 1.58E-04 & 2.02 \\
                        ~&100  & 9.74E-05 & 2.02 & 4.09E-05 & 2.02 & 6.72E-05 & 2.03 & 3.98E-05 & 1.99 \\
                        ~&200  & 2.42E-05 & 2.01 & 1.02E-05 & 2.00 & 1.67E-05 & 2.01 & 1.00E-05 & 1.99 \\
                        ~&400  & 6.05E-06 & 2.00 & 2.55E-06 & 2.00 & 4.15E-06 & 2.01 & 2.51E-06 & 2.00 \\
                        ~&800  & 1.51E-06 & 2.00 & 6.37E-07 & 2.00 & 1.04E-06 & 2.00 & 6.28E-07 & 2.00 \\
                        ~&1600 & 3.77E-07 & 2.00 & 1.59E-07 & 2.00 & 2.59E-07 & 2.00 & 1.57E-07 & 2.00 \\
   \midrule
   \multirow{7}{*}{$P^2$}&25   & 2.34E-04 &   -- & 1.72E-04 &   -- & 2.31E-04 &  --  & 1.72E-04 &  -- \\
                        ~&50   & 3.07E-05 & 2.93 & 3.40E-05 & 2.34 & 3.04E-05 & 2.92 & 3.41E-05 & 2.34 \\
                        ~&100  & 2.20E-06 & 3.81 & 7.10E-06 & 2.26 & 2.19E-06 & 3.79 & 7.10E-06 & 2.26 \\
                        ~&200  & 5.08E-07 & 2.11 & 1.13E-06 & 2.65 & 5.07E-07 & 2.11 & 1.13E-06 & 2.65 \\
                        ~&400  & 1.28E-07 & 1.99 & 1.45E-07 & 2.97 & 1.27E-07 & 1.99 & 1.45E-07 & 2.97 \\
                        ~&800  & 2.07E-08 & 2.63 & 1.74E-08 & 3.06 & 2.06E-08 & 2.63 & 1.74E-08 & 3.06 \\
                        ~&1600 & 2.87E-09 & 2.85 & 2.09E-09 & 3.05 & 2.86E-09 & 2.85 & 2.09E-09 & 3.05 \\
    \bottomrule
  \end{tabular}
\end{table}

\begin{table}[!htb]
  \centering
  \caption{Example \ref{accuracy1D}:  $L^1$ errors and numerical orders of accuracy with initial condition (\ref{inismoo}), using PCDG-moving method.}\label{DGacc1D_mov}
  \begin{tabular}{c c c c c c c c c c }
  \toprule
   \multirow{2}{*} & \multirow{2}{*} {$nx$} & \multicolumn{2}{c}{$h_1$}&\multicolumn{2}{c}{$ m_1 $}&\multicolumn{2}{c}{$ h_2 $}& \multicolumn{2}{c}{$m_2$}\\
   \cmidrule(lr){3-4} \cmidrule(lr){5-6} \cmidrule(lr){7-8} \cmidrule(lr){9-10}
   ~ & ~ &$L^1$ error &order&$L^1$ error &order&$L^1$ error &order&$L^1$ error &order\\
   \midrule
   \multirow{7}{*}{$P^1$}&25 & 1.64E{-03} &     -- &  6.98E{-04} &  --  & 1.18E{-03} &     -- & 6.42E{-04} &   --\\
                        ~&50 & 4.00E{-04} &   2.04 &  1.66E{-04} & 2.08 & 2.80E{-04} &   2.07 & 1.58E{-04} &   2.02\\
                        ~&100& 9.89E{-05} &   2.02 &  4.09E{-05} & 2.02 & 6.97E{-05} &   2.03 & 3.98E{-05} &   1.99\\
                        ~&200& 2.46E{-05} &   2.01 &  1.02E{-05} & 2.00 & 1.70E{-05} &   2.01 & 1.00E{-05} &   1.99\\
                        ~&400& 6.15E{-06} &   2.00 &  2.55E{-06} & 2.00 & 4.25E{-06} &   2.00 & 2.51E{-06} &   2.00\\
                        ~&800& 1.54E{-06} &   2.00 &  6.42E{-07} & 2.00 & 1.06E{-06} &   2.00 & 6.29E{-07} &   2.00\\
                       ~&1600& 3.84E{-07} &   2.00 &  1.59E{-07} & 2.00 & 2.65E{-07} &   2.00 & 1.57E{-07} &   2.00\\
   \midrule
   \multirow{7}{*}{$P^2$}&25 & 2.38E{-04} &     -- &  1.78E{-04} &  --  &  2.34E{-04} &  --  & 1.78E{-04} &   --\\
                        ~&50 & 3.11E{-05} &   2.94 &  3.50E{-05} & 2.34 &  3.07E{-05} & 2.93 & 3.52E{-05} &   2.34\\
                        ~&100& 2.31E{-06} &   3.75 &  7.28E{-06} & 2.27 &  2.30E{-06} & 3.74 & 7.28E{-06} &   2.27\\
                        ~&200& 4.73E{-07} &   2.29 &  1.17E{-06} & 2.64 &  4.71E{-07} & 2.29 & 1.17E{-06} &   2.64\\
                        ~&400& 1.24E{-07} &   1.94 &  1.52E{-07} & 2.94 &  1.23E{-07} & 1.94 & 1.52E{-07} &   2.94\\
                        ~&800& 2.01E{-08} &   2.62 &  1.85E{-08} & 3.03 &  2.00E{-08} & 2.62 & 1.85E{-08} &   3.03\\
                       ~&1600& 3.17E{-09} &   2.66 &  2.32E{-09} & 2.99 &  3.15E{-09} & 2.67 & 2.28E{-09} &   3.03\\
  \bottomrule
  \end{tabular}
\end{table}

\begin{example}{\bf Test for the still water well-balanced property}\label{exact1D:still}
\end{example}
In this example, which is taken from \cite{kurganov2009central}, we testify the still water well-balanced property of our proposed methods. To this end, both the continuous bottom topography
\begin{equation}\label{smo1D}
  b(x)=\left\{\begin{array}{lll}
    0.25(\cos(10\pi(x-0.5))+1)-2,&\text{if}\ 0.4<x < 0.6, \\
    -2&\text{otherwise},\\
    \end{array}\right.
\end{equation}
and the discontinuous bottom topography
\begin{equation}\label{dis1D}
  b(x)=\left\{\begin{array}{lll}
    -1.5,&\text{if}\ x>0.5, \\
    -2,&\text{otherwise},\\
    \end{array}\right.
\end{equation}
are considered. And the initial conditions are given by
\begin{equation}\label{stillwb_u1D}
  \begin{aligned}
    (h_1,m_1,w, m_2)(x,0) = (h_1,m_1,w, m_2)_{\text{eq}}(x) = (1,0,-1,0),
  \end{aligned}
\end{equation}
in the computational domain $[-0.2,1]$. We set the gravitation constant $g = 10$ and the density ratio $r=0.98$. We compute the numerical solutions up to $t = 0.1$ with 100 uniform cells, and calculate the $L^1$ and $L^{\infty}$ errors at double precision for the conservative and equilibrium variables with piecewise linear and quadratic polynomial basis. Tables \ref{2LSWE:wbstill_con1D_s}-\ref{2LSWE:wbstill_con1D_m} show the obtained results by two developed schemes over the continuous bottom topography, and the corresponding results over the discontinuous bottom function are displayed in Tables \ref{2LSWE:wbstill_dis1D_s}-\ref{2LSWE:wbstill_dis1D_m}. It can be seen that no matter what the bottom topography is, both the PCDG-still and PCDG-moving schemes can maintain the ``lake-at-rest'' steady state exactly, which is consistent with the well-balanced property we anticipated.
\begin{table}[!htb]
  \centering
  \caption{Example \ref{exact1D:still}: $L^1$ and $L^{\infty}$ errors for one-dimensional still water equilibrium state with the continuous bottom topography (\ref{smo1D}), using PCDG-still method.}\label{2LSWE:wbstill_con1D_s}
  \begin{tabular}{ c c c c c c c c}
   \toprule
      &{error}&{$h_1$}&{$m_1$}&{$h_2$}&{$m_2$}&{$w$}\\
    \midrule
    \multirow{2}{*}{$P^1$ }&$L^1$        & 2.55E-15 & 8.14E-16 & 2.54E-15 & 8.93E-16 & 2.54E-15 \\
                          ~&$L^{\infty}$ & 1.42E-14 & 2.72E-15 & 1.44E-14 & 3.32E-15 & 1.44E-14 \\
    \midrule
    \multirow{2}{*}{$P^2$ }&$L^1$        & 1.04E-14 & 2.02E-15 & 1.04E-14 & 1.98E-15 & 1.04E-14 \\
                         ~ &$L^{\infty}$ & 7.82E-14 & 6.43E-15 & 7.73E-14 & 7.22E-15 & 7.73E-14 \\
    \bottomrule
  \end{tabular}
\end{table}

\begin{table}[!htb]
  \centering
  \caption{Example \ref{exact1D:still}: $L^1$ and $L^{\infty}$ errors for one-dimensional still water equilibrium state with the continuous bottom topography (\ref{smo1D}), using PCDG-moving method.}\label{2LSWE:wbstill_con1D_m}
  \begin{tabular}{ c c c c c c c c}
   \toprule
      &{error}&{$h_1$}&{$m_1$}&{$h_2$}&{$m_2$}&{$E_1$}&{$E_2$}\\
    \midrule
    \multirow{2}{*}{$P^1$ }&$L^1$        & 7.83E{-14} & 1.97E{-14} &  7.65E{-14} &  1.89E{-14} &  1.40E{-12} &  1.39E{-12}\\
                          ~&$L^{\infty}$ & 2.27E{-13} & 7.82E{-14} &  2.00E{-13} &  7.88E{-14} &  1.23E{-12} &  1.20E{-12}\\
    \midrule
    \multirow{2}{*}{$P^2$ }&$L^1$        & 1.09E-13 & 3.54E-14 & 1.10E-13 & 3.50E-14 & 7.97E-13 & 7.89E-13\\
                         ~ &$L^{\infty}$ & 3.27E-13 & 1.25E-13 & 3.13E-13 & 1.11E-13 & 7.37E-13 & 7.27E-13\\
    \bottomrule
  \end{tabular}
\end{table}

\begin{table}[!htb]
  \centering
  \caption{Example \ref{exact1D:still}: $L^1$ and $L^{\infty}$ errors for one-dimensional still water equilibrium state with the discontinuous bottom topography (\ref{dis1D}), using PCDG-still method.}\label{2LSWE:wbstill_dis1D_s}
  \begin{tabular}{ c c c c c c c c}
   \toprule
      &{error}&{$h_1$}&{$m_1$}&{$h_2$}&{$m_2$}&{$w$}\\
    \midrule
    \multirow{2}{*}{$P^1$ }&$L^1$        & 2.61E-15 & 9.64E-16 & 2.54E-15 & 1.02E-15 & 2.54E-15 \\
                          ~&$L^{\infty}$ & 1.42E-14 & 3.74E-15 & 1.47E-14 & 3.33E-15 & 1.47E-14 \\
    \midrule
    \multirow{2}{*}{$P^2$ }&$L^1$        & 9.79E-15 & 2.31E-15 & 9.78E-15 & 2.02E-15 & 9.78E-15 \\
                         ~ &$L^{\infty}$ & 7.59E-14 & 1.04E-14 & 7.66E-14 & 8.35E-15 & 7.66E-14 \\
    \bottomrule
  \end{tabular}
\end{table}

\begin{table}[!htb]
  \centering
  \caption{Example \ref{exact1D:still}: $L^1$ and $L^{\infty}$ errors for one-dimensional still water equilibrium state with the discontinuous bottom topography (\ref{dis1D}), using PCDG-moving method.}\label{2LSWE:wbstill_dis1D_m}
  \begin{tabular}{ c c c c c c c c}
   \toprule
      &{error}&{$h_1$}&{$m_1$}&{$h_2$}&{$m_2$}&{$E_1$}&{$E_2$}\\
    \midrule
    \multirow{2}{*}{$P^1$ }&$L^1$        & 9.37E-14 & 2.86E-14 & 7.95E-14 & 3.15E-14 & 1.30E-12 & 1.28E-12\\
                          ~&$L^{\infty}$ & 7.75E-13 & 1.60E-13 & 6.63E-13 & 1.53E-13 & 1.16E-12 & 1.16E-12\\
    \midrule
    \multirow{2}{*}{$P^2$ }&$L^1$        & 1.06E-13 & 3.56E-14 & 1.04E-13 & 3.39E-14 & 7.69E-13 & 7.60E-13\\
                         ~ &$L^{\infty}$ & 5.84E-13 & 1.84E-13 & 6.43E-13 & 1.56E-13 & 7.70E-13 & 7.35E-13\\
    \bottomrule
  \end{tabular}
\end{table}


\begin{example}{\bf Small perturbation of still water equilibrium}\label{2lswe:perstill}
\end{example}
We now demonstrate the ability of our proposed schemes to capture the propagation of small perturbations of the ``lake-at-rest'' steady state. Almost the same setup in Example \ref{exact1D:still}, we add a small perturbation to the upper layer depth $(h_1)_{\text{eq}}$
\begin{equation}\label{stillper_u1D}
  \begin{aligned}
  &h_1(x,0)=(h_1)_{\text{eq}}(x) +
      \left\{\begin{array}{lll}
      0.00001,&  \text{if} \  x \in [0.1,0.2], \\
      0,      &  \text{otherwise},\\
      \end{array}\right.  \\
  &w(x,0)=w_{\text{eq}}(x),\quad m_1(x,0) = (m_1)_{\text{eq}}(x),\quad m_2(x,0) = (m_2)_{\text{eq}}(x).
  \end{aligned}
\end{equation}
The stopping time is $t=0.15$. We calculate the results on a coarse mesh with 200 cells and a finer mesh of 3000 cells for comparison.
The fluctuation of the water surface $h_1+w$ over two different bottom topographies is displayed in Fig. \ref{st_per}. As expected, our studied well-balanced schemes are successful in capturing the small perturbation of the hydrostatic equilibrium state, and no oscillations near the discontinuity have been generated by either of the developed methods.

\begin{figure}[htb!]
  \centering
    \includegraphics[width=7cm,scale=1]{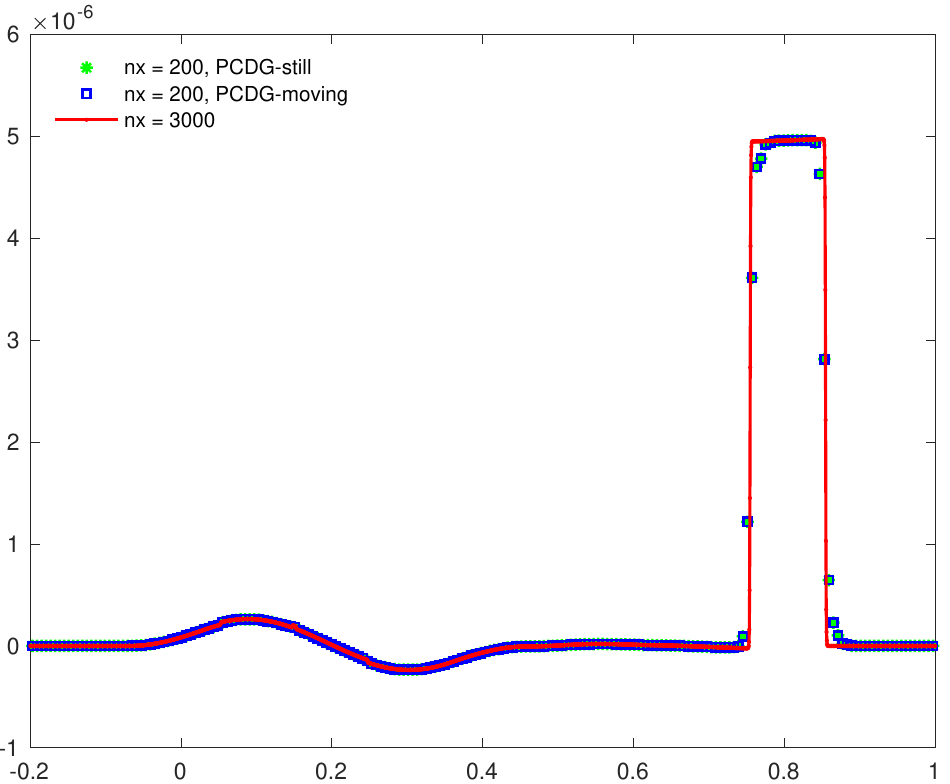}
    \includegraphics[width=7cm,scale=1]{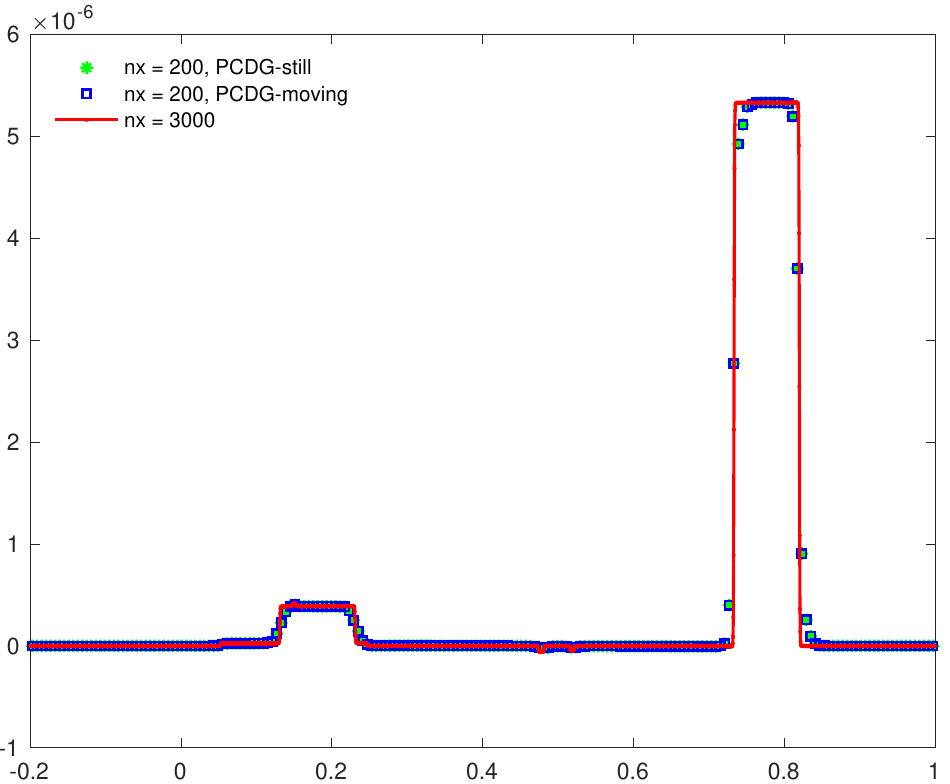}
  \caption{Example \ref{2lswe:perstill}: Numerical solutions of the water surface $h_1+w$ with the continuous bottom topography  $(\ref{smo1D})$ (left) and the discontinuous bottom topography  $(\ref{dis1D})$ (right), using 200 and 3000 grid cells. }\label{st_per}
\end{figure}

\begin{example}{\bf Interface propagation}\label{pro1d}
\end{example}
We use this example to study the propagation of the interface. Here we restrict our consideration to the case of a flat bottom topography $b(x)=-1$ since the nonconservative interlayer exchange terms on the right-hand side of (\ref{2LSWE}) is more challenging in computational than the corresponding geometric source terms. Two tests taken from \cite{kurganov2009central} are chosen. We take sufficiently large computational domains to make sure that no waves reach the boundaries until the final time. The first initial conditions on $[-1,1]$ are given by
\begin{equation}\label{pro}
(h_1,m_1,w, m_2)(x,0) = \left\{\begin{array}{lll}
    (0.50,1.250,-0.50,1.250),&  \text{if} \  x < 0.3, \\
    (0.45,1.125,-0.45,1.375),&\text{otherwise},\\
    \end{array}\right.
\end{equation}
which the interface initially located at $x=0.3$. The gravitational constant is $g = 10$ and the density ratio is $r=0.98$.
We simulate the solutions until $t=0.1$ with 400 uniform cells, and present the upper layer water height $h_1$, water surface $h_1+w$ and velocity of the upper layer $u_1$ in Fig. \ref{propa2}. It can be found that an intermediate flat state ($h_1\approx 0.475$) which is connected to the left ($h_1=0.5$) and right ($h_1=0.45$) states through shock discontinuities has emerged. The velocity $u_1$ develops a flat plateau in the region of that intermediate state. Both the PCDG-still and PCDG-moving methods realize non-oscillatory and high-resolution interface tracking of fluid motion.

\begin{figure}[htb!]
  \centering
  \subfigure[upper layer $h_1$]{
  \centering
  \includegraphics[width=4.9cm,scale=1]{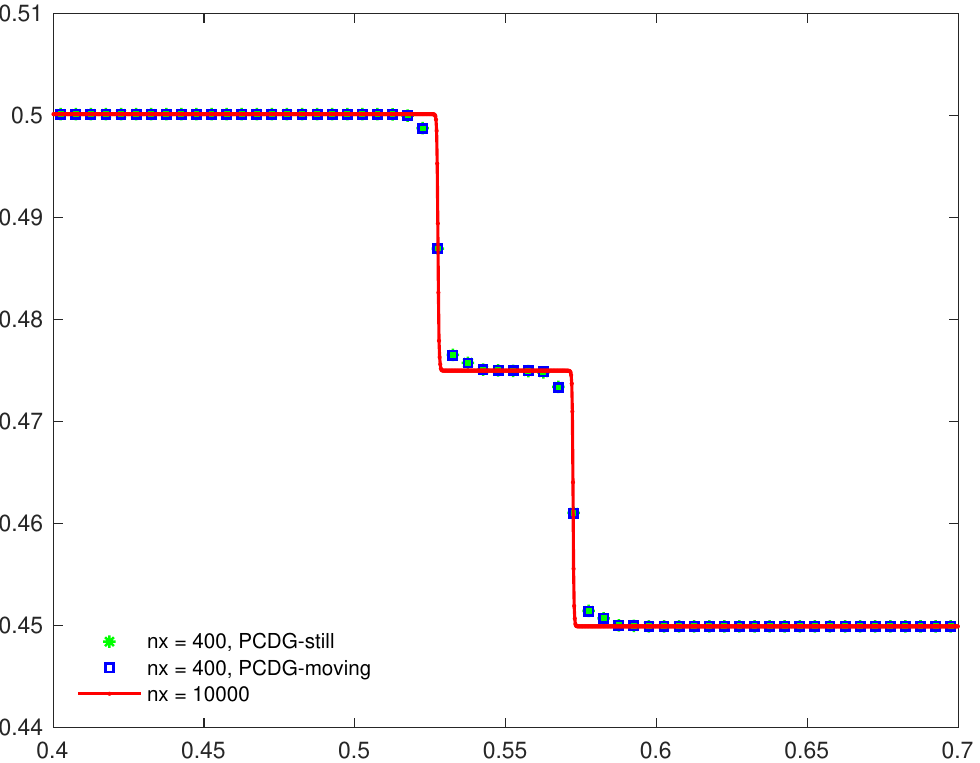}
  }
  \subfigure[water surface $h_1+w$]{
  \centering
  \includegraphics[width=4.9cm,scale=1]{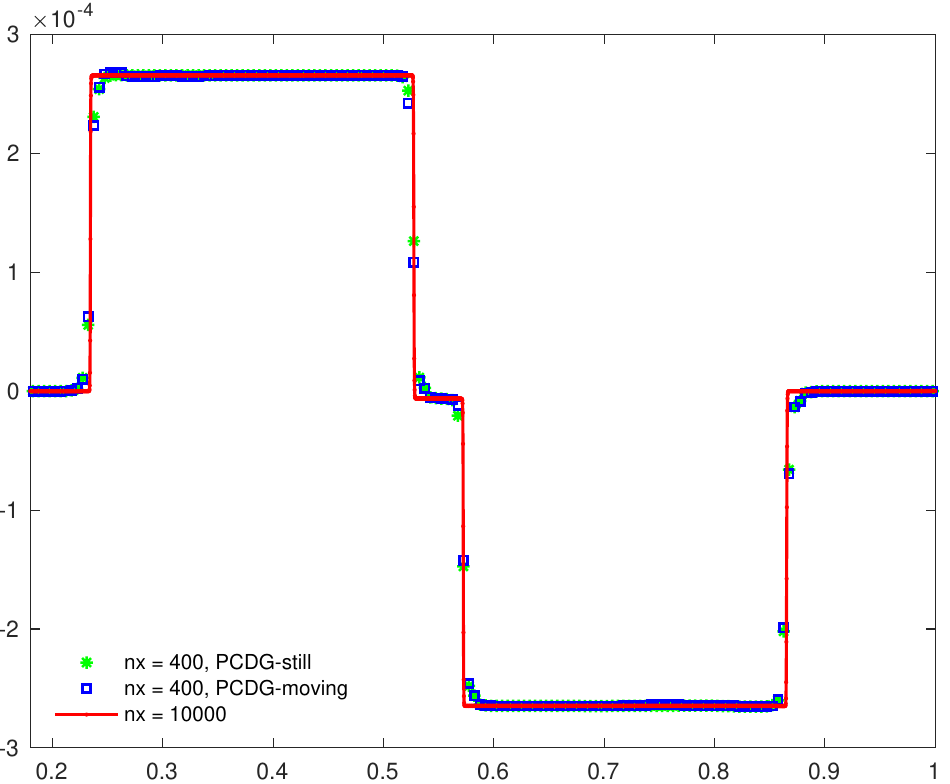}
  }
  \subfigure[ velocity $u_1$]{
  \centering
  \includegraphics[width=4.9cm,scale=1]{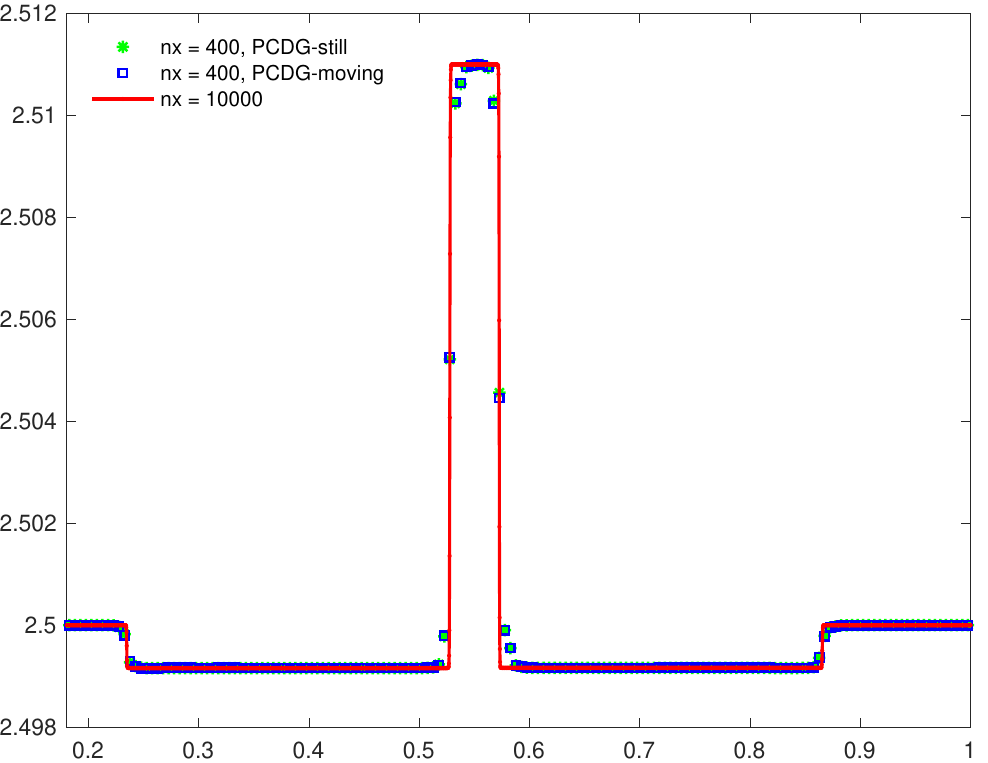}
  }
  \caption{Example \ref{pro1d}: From left to right: Numerical solutions of upper layer $h_1$ zoomed at the interface area, water surface $h_1+w$ and velocity of the upper layer $u_1$ with initial conditions (\ref{pro}). }\label{propa2}
\end{figure} 

Next, a much larger initial jump at the interface is considered
\begin{equation}\label{pro_large}
(h_1,m_1,w, m_2)(x,0) = \left\{\begin{array}{lll}
    (1.8,0,-1.8,0),&  \text{if} \  x < 0, \\
    (0.2,0,-0.2,0),&\text{otherwise},\\
    \end{array}\right.
\end{equation}
with the flat bottom topography $b(x)=-2$ on the computational domain $[-5,5]$. The gravitational constant is $g = 9.81$ and the density ratio is $r=0.98$. The stopping time is set as $t=1$. Fig. \ref{large2} plots the interface $w$ and water surface $h_1+w$ calculated by our two proposed methods with 200 and 5000 uniform grid cells. Moreover, we simulate the same tests with a refined 5000 uniform cells by second-order CU and PCCU schemes, third-order PCDG-still and PCDG-moving schemes, and illustrate the corresponding results in  Fig. \ref{large2} for comparison as well. This provides additional evidence of the robustness of our proposed path-conservative approach.

\begin{figure}[htb!]
 \centering
 \subfigure[interface $w$, $nx = 200$]{
  \centering
  \includegraphics[width=6.4cm,scale=1]{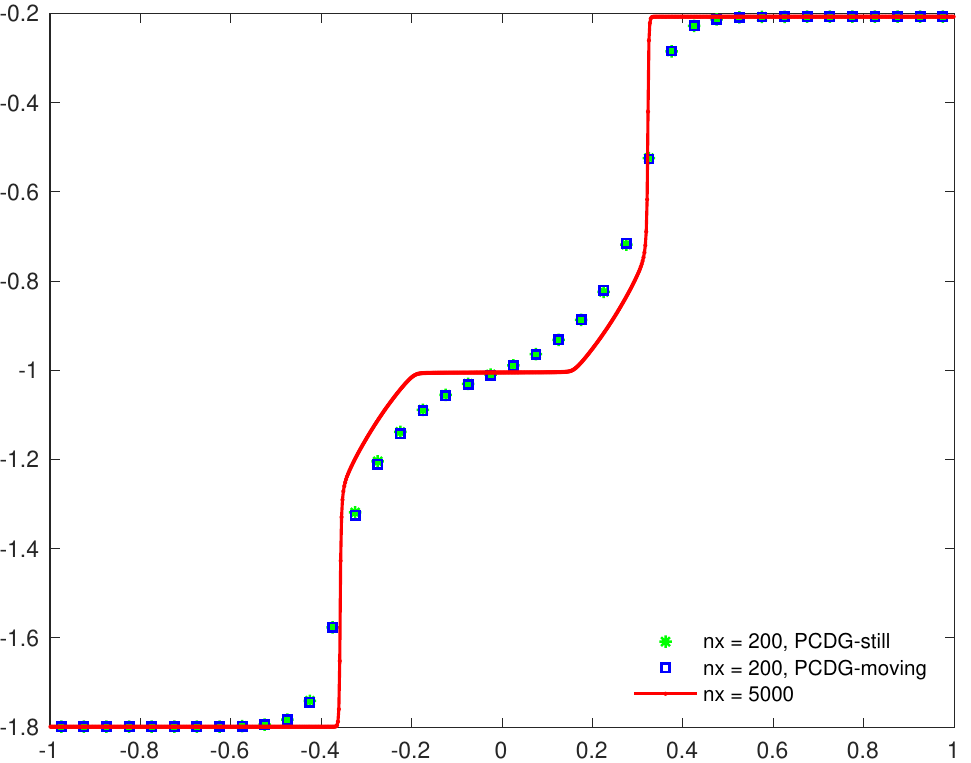}
  }
  \subfigure[water surface $h_1+w$, $nx = 200$]{
  \centering
  \includegraphics[width=6.5cm,scale=1]{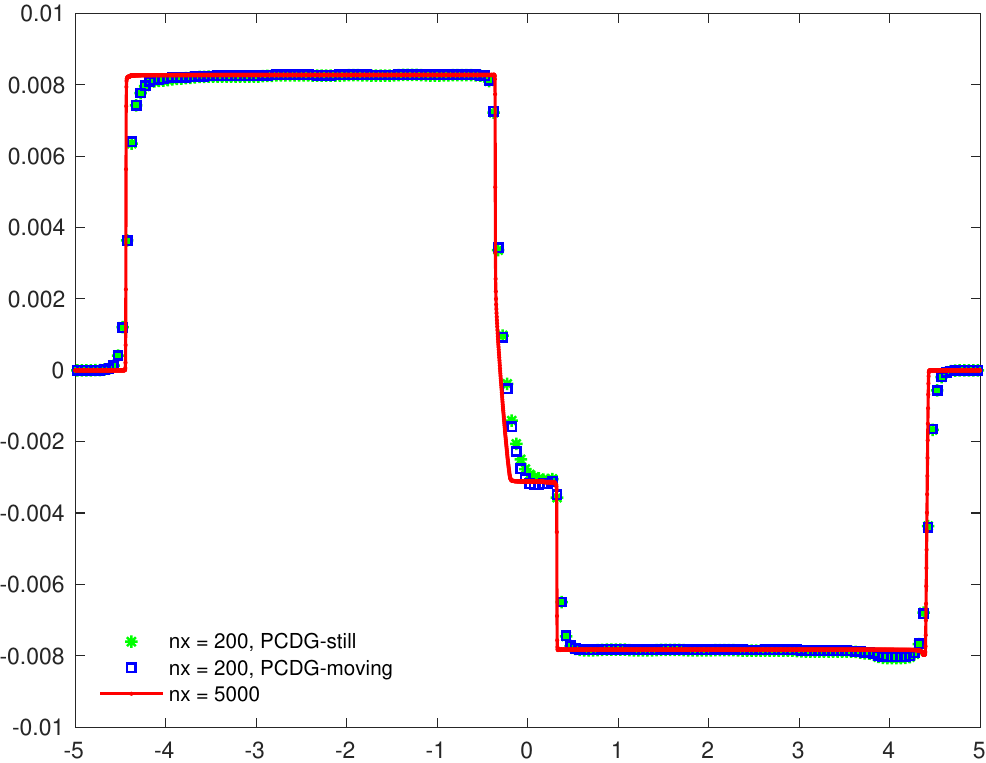}
  }
  \subfigure[interface $w$, comparison]{
  \centering
  \includegraphics[width=6.5cm,scale=1]{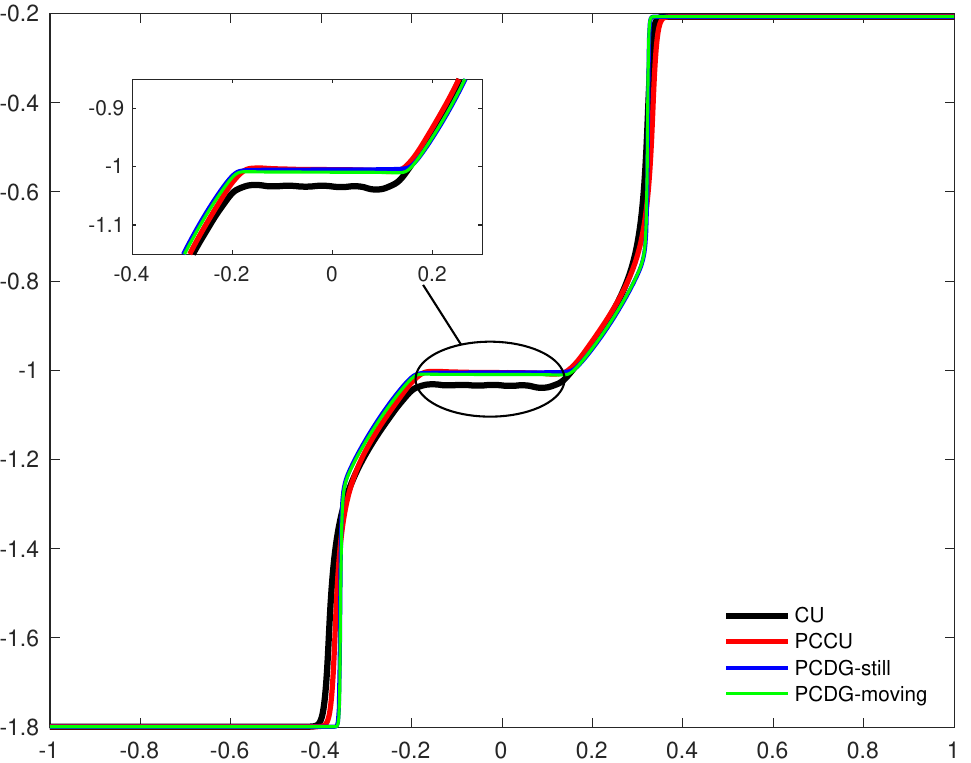}
  }
  \subfigure[water surface $h_1+w$, comparison]{
  \centering
  \includegraphics[width=6.4cm,scale=1]{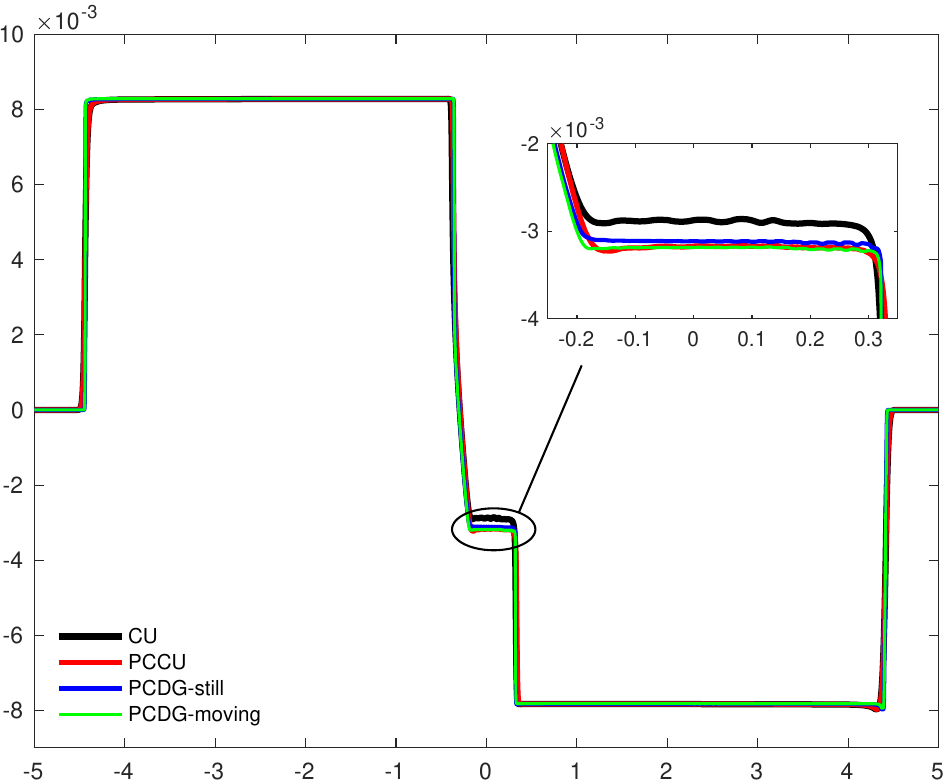}
  }
 \caption{Example \ref{pro1d}: Numerical solutions of interface $w$ (left) and water surface $h_1+w$ (right) with initial conditions (\ref{pro_large}). Top: using 200 and 5000 grid cells; bottom: using CU, PCCU, PCDG-still, and PCDG-moving schemes for comparison.}\label{large2}
\end{figure}

\begin{example}{\bf Internal dam break}\label{dam1d}
\end{example}
To testify the capability of our constructed still water equilibrium preserving PCDG method for the discontinuous steady state solutions,
We illustrate an internal dam break problem, which is taken from \cite{chu2022fifth} over a nonflat bottom topography
\begin{equation*}
  b(x)=0.25e^{-x^2}-2,
\end{equation*}
together with the initial conditions
\begin{equation}\label{dam}
(h_1,m_1,w, m_2)(x,0) = \left\{\begin{array}{lll}
    (1.6,0,-1.6,0),&  \text{if} \  x < 0, \\
    (0.7,0,-0.7,0),&\text{otherwise}.\\
    \end{array}\right.
\end{equation}
The computational domain is chosen as $[-5,5]$, which is large enough to eliminate any significant influence of boundary conditions on the computed solutions. The gravitational constant is $g = 9.81$ and the density ratio is $r=0.998$. In this initial boundary value problem, the solution is expected to converge to a discontinuous non-stationary steady state that contains a hydraulic jump. We simulate the solutions up to $t=300$ with 200 uniform grid cells and present the obtained results in Fig. \ref{dam_steady}. We also show the convergence solutions calculated by the second-order CU, PCCU scheme, and our proposed third-order PCDG-still scheme for comparison. We can see that the PCDG-still method achieves a high overall resolution of the discontinuous interface.

\begin{figure}[htb!]
  \centering 
  \subfigure[PCDG, $nx=200$]{
  \centering
  \includegraphics[width=7cm,scale=1]{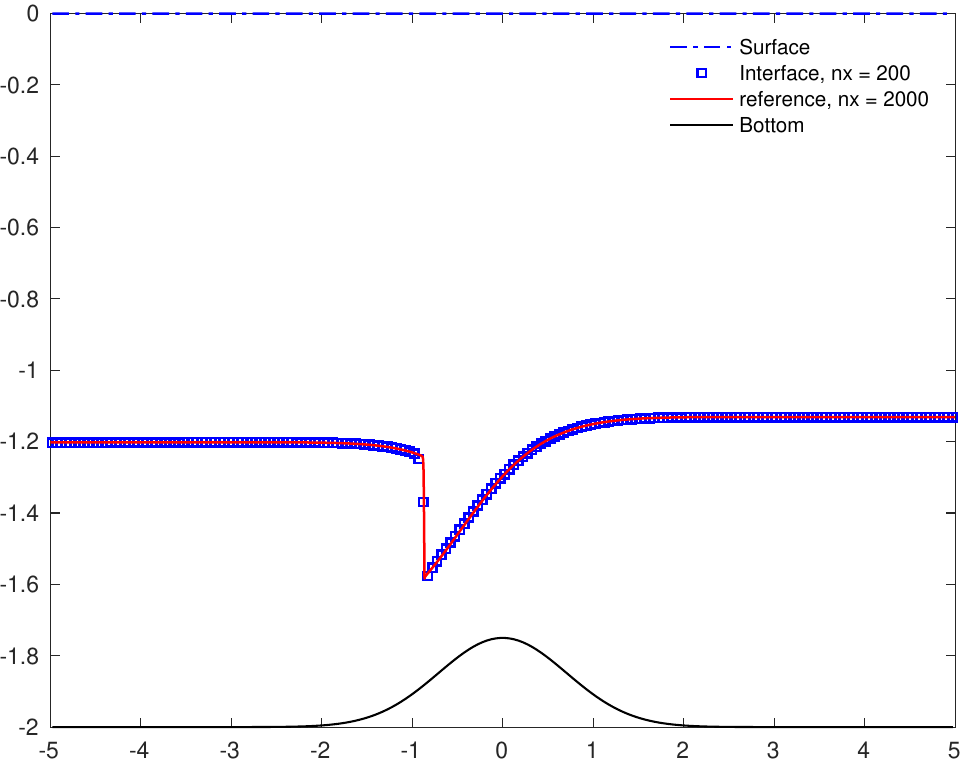}
  }
  \subfigure[comparison, $nx = 2000$]{ 
  \centering
  \includegraphics[width=7cm,scale=1]{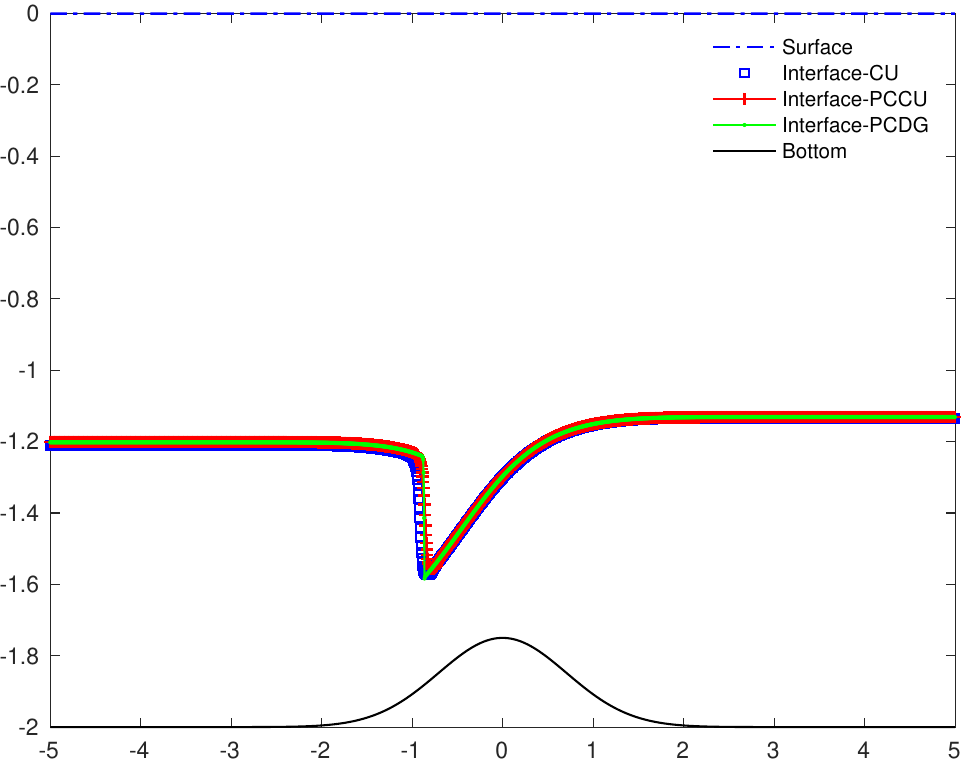}
  }
  \caption{Example \ref{dam1d}: Left: The discrete steady states for the water surface $h_1+w$ and interface $w$, with 200 cells; right: using CU, PCCU, PCDG-still schemes for comparison, with 2000 grid cells. }\label{dam_steady}
\end{figure}

\begin{example}{\bf Isolated internal shock}\label{iso1d}
\end{example}
We take the same example as in \cite{castro2013entropy} to demonstrate the advantage of the path-conservative discontinuous Galerkin method,
which corresponds to an isolated internal shock wave traveling to the right. The initial conditions on the computational domain $[0,1]$ are given by
\begin{equation}\label{iso}
\boldsymbol{u}(x,0)=(h_1,m_1,h_2, m_2)(x,0) = \left\{\begin{array}{lll}
  \boldsymbol{u}_L,&\text{if} \  x < 0.5, \\
  \boldsymbol{u}_R,&\text{otherwise},\\
    \end{array}\right.
\end{equation}
with
\begin{equation}\label{iso_u}
  \begin{aligned}
    \boldsymbol{u}_L =& ((h_1)_L,(m_1)_L,(h_2)_L,(m_2)_L)^T = (0.8817,-0.1738,1.091,0.1613)^T,\\
    \boldsymbol{u}_R =& ((h_1)_R,(m_1)_R,(h_2)_R,(m_2)_R)^T = (0.3700,-0.1868,1.593,0.1742)^T.
  \end{aligned}
\end{equation}
For the choice of the flat bottom $b$, we need to select the constant $b_{\text{ref}}$ properly to make sure that the water surface is relatively small, which corresponds to the free surface of the undisturbed water $\varepsilon = h_1+h_2+b_{\text{ref}}\approx 0$. We take the following three reasonable choices:
\begin{equation*}
  \begin{aligned}
    & (a) \ b_{\text{ref}} = -((h_1)_L + (h_2)_L);\\
    & (b) \ b_{\text{ref}} = -((h_1)_R + (h_2)_R);\\
    & (c) \ b_{\text{ref}} = -((h_1)_L + (h_2)_L + (h_1)_R + (h_2)_R)/2.
  \end{aligned}
\end{equation*}
The constant gravitational acceleration is $g = 9.81$ and the density ratio is $r=0.98$. We employ the PCDG-still method for our simulation over three different bottom choices until $t=0.6$ with 200 and 2000 uniform cells for comparison. Fig. \ref{iso_s} presents the corresponding results and zoom-in of the lower layer water height $h_2$ and water depth $h_1+h_2$. We also compare the current solutions with second-order CU and PCCU schemes under a finer mesh of 2000 cells and show the simulations in Fig. \ref{iso_s}. Herein CUa, CUb, CUc represent the central-upwind method with $b_{\text{ref}}$ given by three options, respectively. Since the bottom function is constant and appears in the equations only by its derivative, the exact solution is not expected to depend on the particular choice of $b$.  We can see that the PCDG solution is much more robust. It is independent of the reference water surface level, which yields almost the same results as the CUc and PCCU methods as expected. While the results acquired by the other two versions of the CU method are qualitatively different.

\begin{figure}[htb!]
  \centering 
  \subfigure[PCDG, $nx = 200$]{
  \centering
  \includegraphics[width=6.5cm,scale=1]{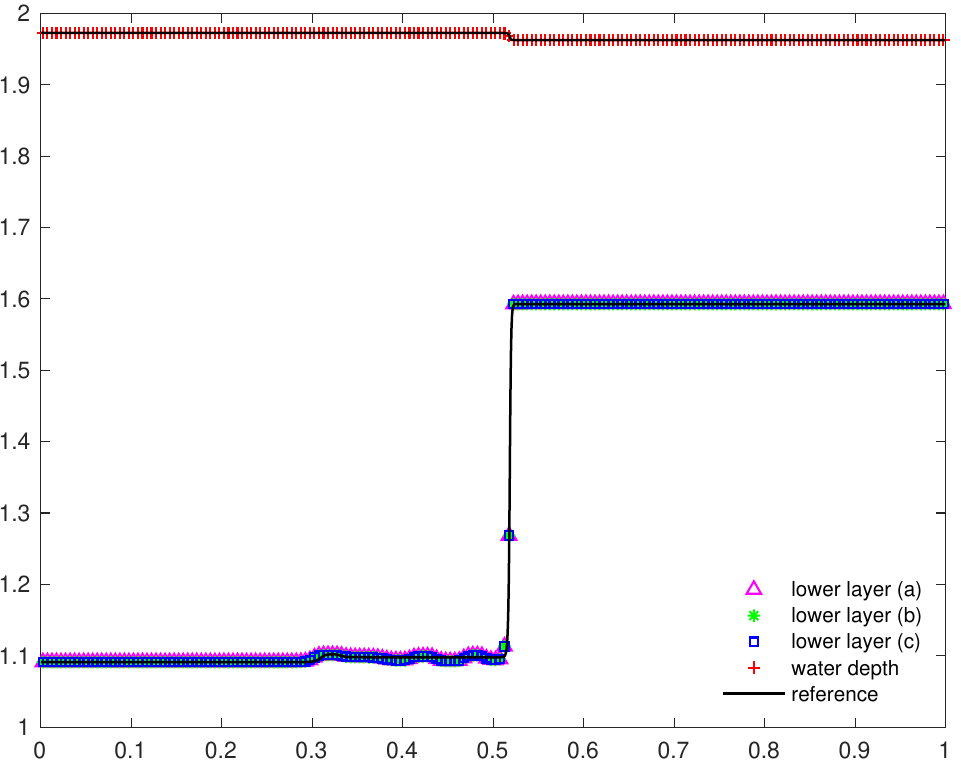}
  }
  \subfigure[comparison, $nx = 2000$]{
  \centering
  \includegraphics[width=6.5cm,scale=1]{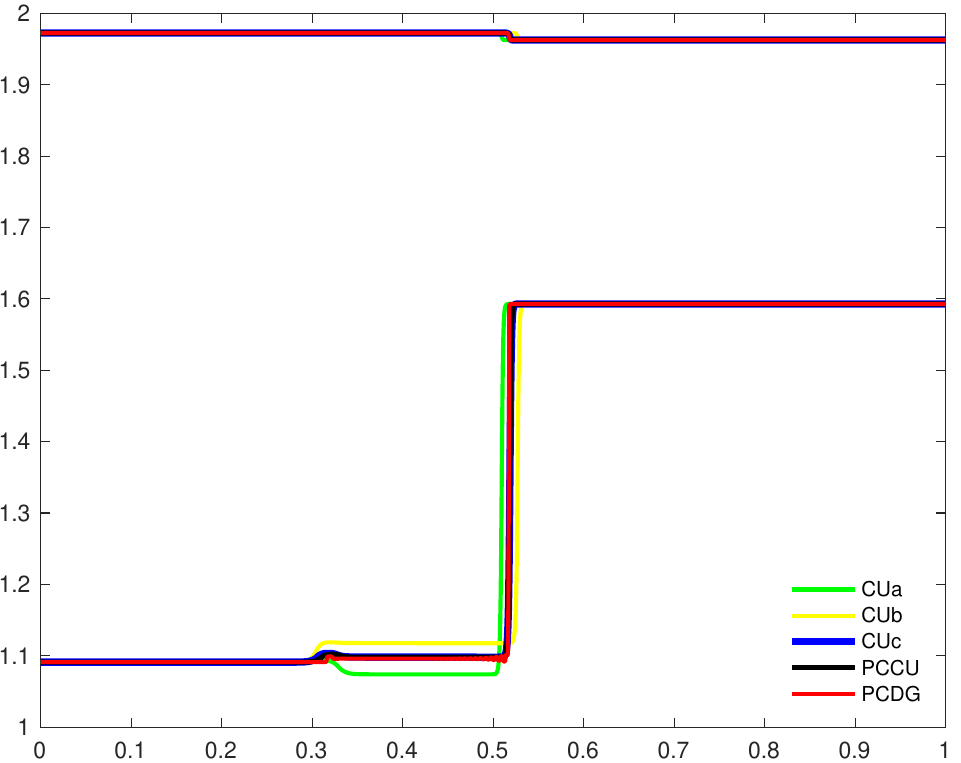}
  }
  \subfigure[lower layer $h_2$, $nx = 200$]{
  \centering
  \includegraphics[width=6.5cm,scale=1]{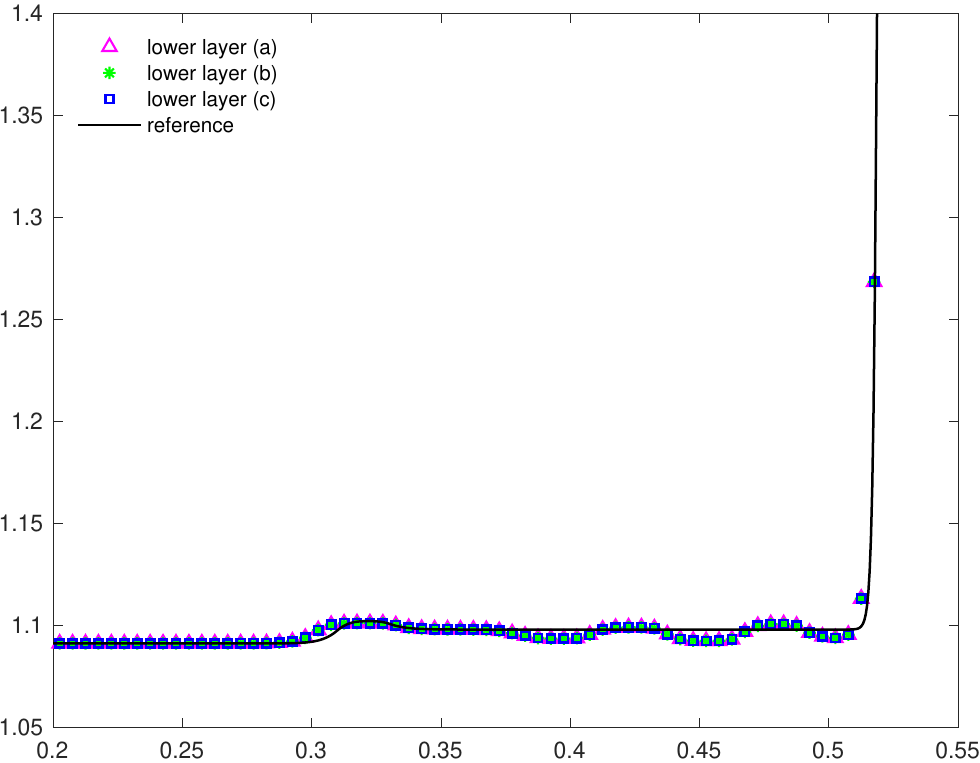}
  }
  \subfigure[lower layer $h_2$, comparison]{
  \centering
  \includegraphics[width=6.5cm,scale=1]{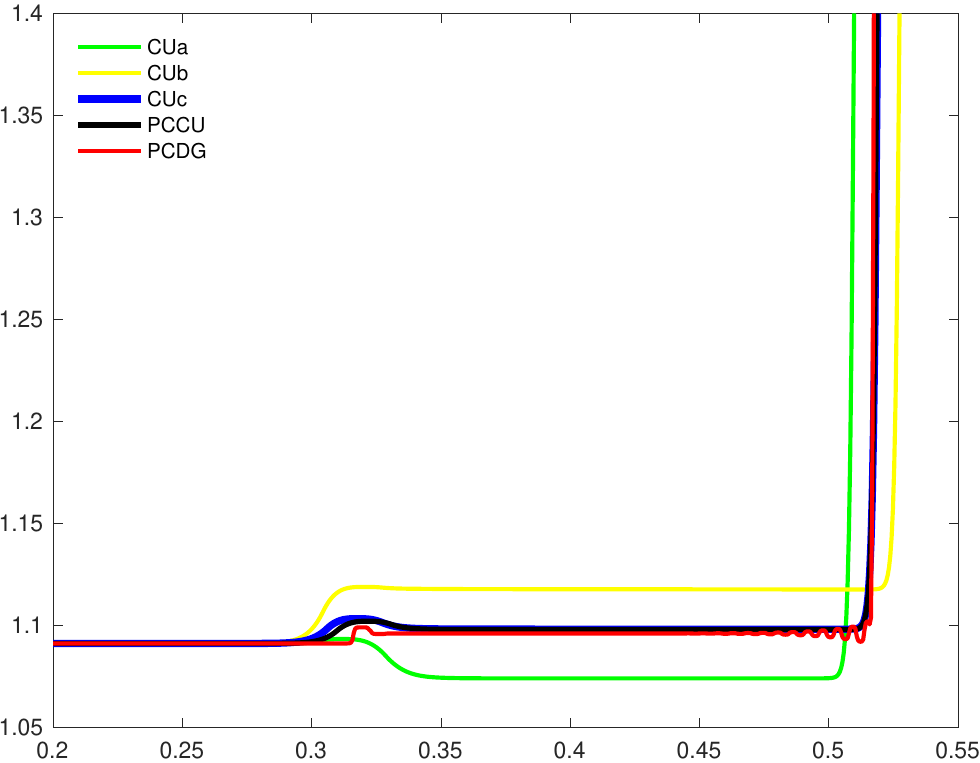}
  }
  \subfigure[water depth $h_1+h_2$, $nx = 200$]{
  \centering
  \includegraphics[width=6.5cm,scale=1]{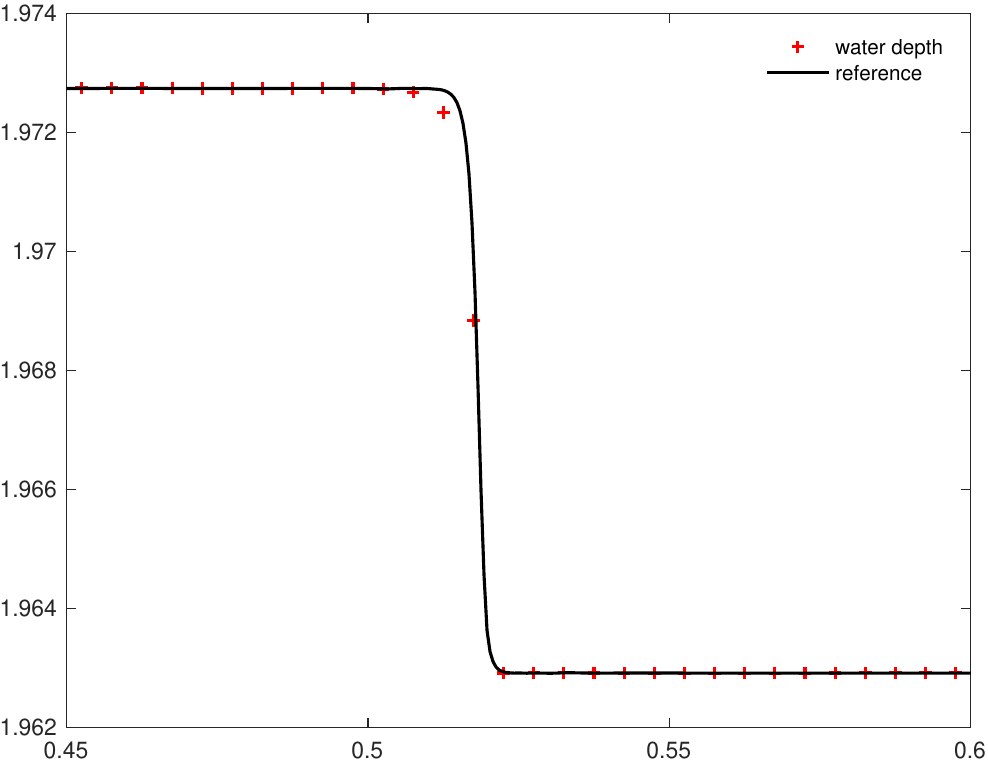}
  }
  \subfigure[water depth $h_1+h_2$, comparison]{
  \centering
  \includegraphics[width=6.5cm,scale=1]{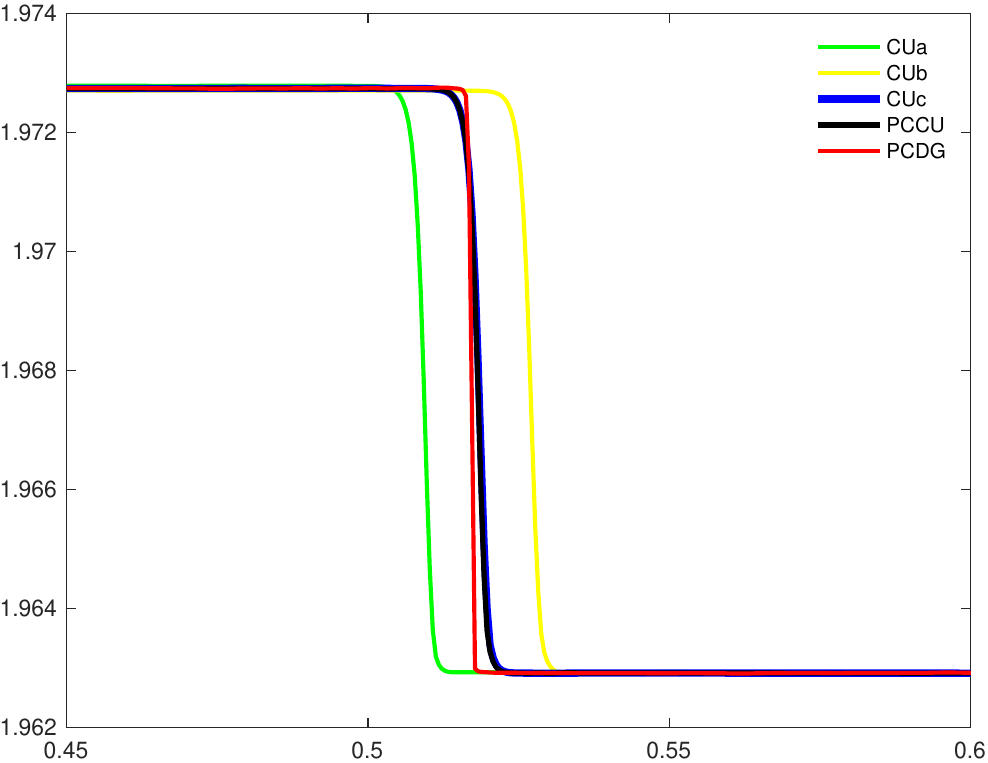}
  }
  \caption{Example \ref{iso1d}: From top to bottom: Numerical solutions of the lower layer $h_2$ and water depth $h_1+h_2$, zoom-in of the lower layer $h_2$ and the water depth $h_1+h_2$. Left: using 200 and 2000 grid cells; right: using CU, PCCU, PCDG-still schemes for comparison.}\label{iso_s}
\end{figure}


\begin{example}{\bf Barotropic tidal flow }\label{tidal1d}
\end{example}
This example is aimed at mimicking a tidal wave by imposing a barotropic periodic perturbation as the inflow boundary condition. Taken from \cite{diaz2019path}, the initial conditions with an internal shock are given by
\begin{equation}\label{tidal_ini}
  \boldsymbol{u}(x,0)=(h_1,m_1,h_2, m_2)(x,0) = \left\{\begin{array}{lll}
    \boldsymbol{u}_L,&\text{if} \  x \leq 0, \\
    \boldsymbol{u}_R,&\text{otherwise},\\
      \end{array}\right.
  \end{equation}
with
\begin{equation}\label{tidal_u}
  \begin{aligned}
  \boldsymbol{u}_L =& ((h_1)_L,(m_1)_L,(h_2)_L,(m_2)_L)^T = (0.69914,-0.21977,1.26932,0.20656)^T,\\
  \boldsymbol{u}_R =& ((h_1)_R,(m_1)_R,(h_2)_R,(m_2)_R)^T = (0.37002,-0.18684,1.59310,0.17416)^T.
  \end{aligned}
\end{equation}
As can be seen from the numerical experiments performed in Example \ref{iso1d}, the reference water surface level
\begin{equation}\label{b_tidal}
  b(x)\equiv b_{\text{ref}} = -\frac{1}{2}((h_1)_L + (h_2)_L + (h_1)_R + (h_2)_R)
\end{equation}
is the best option for the second-order CU scheme. We choose this flat bottom function and simulate the solutions at different times $t=10,25,60,64$ on the computational domain $[-10,10]$. The gravitation constant is $g = 9.81$ and the density ratio is $r=0.98$. Open boundary conditions are imposed on the right $x=10$. On the left $x=-10$, we impose the periodic in time boundary conditions for the $h_1$ and $h_2$ components
\begin{equation}\label{h_bc}
 \begin{aligned}
  &h_1(-10,0) = (h_1)_L + (h_1)_L\dfrac{0.03}{|b_{\text{ref}}|}\sin\left(\dfrac{\pi t}{50}\right), \\
  &h_2(-10,0) = (h_2)_L + (h_1)_L\dfrac{0.03}{|b_{\text{ref}}|}\sin\left(\dfrac{\pi t}{50}\right),
 \end{aligned}
\end{equation}
and zero-order interpolation for the $m_1$ and $m_2$ components. For comparison, we run the simulations by CU, PCCU, and PCDG-still schemes with 1000 uniform cells, and exhibit the water surface $h_1+w$ (left) and interface $w$ in Fig. \ref{tidal_com}. As stated in \cite{diaz2019path}, even though the CU solution behaves similarly with path-conservative methods at small times, it will produce spurious oscillations and becomes unstable eventually for larger times. In contrast, our developed PCDG-still solutions remain stable at all times, which demonstrates the significance of the path-conservative approach.

\begin{figure}[htb!]
  \centering
  \subfigure[water surface $h_1+w$, $t=10$]{
  \centering
  \includegraphics[width=5.1cm,scale=1]{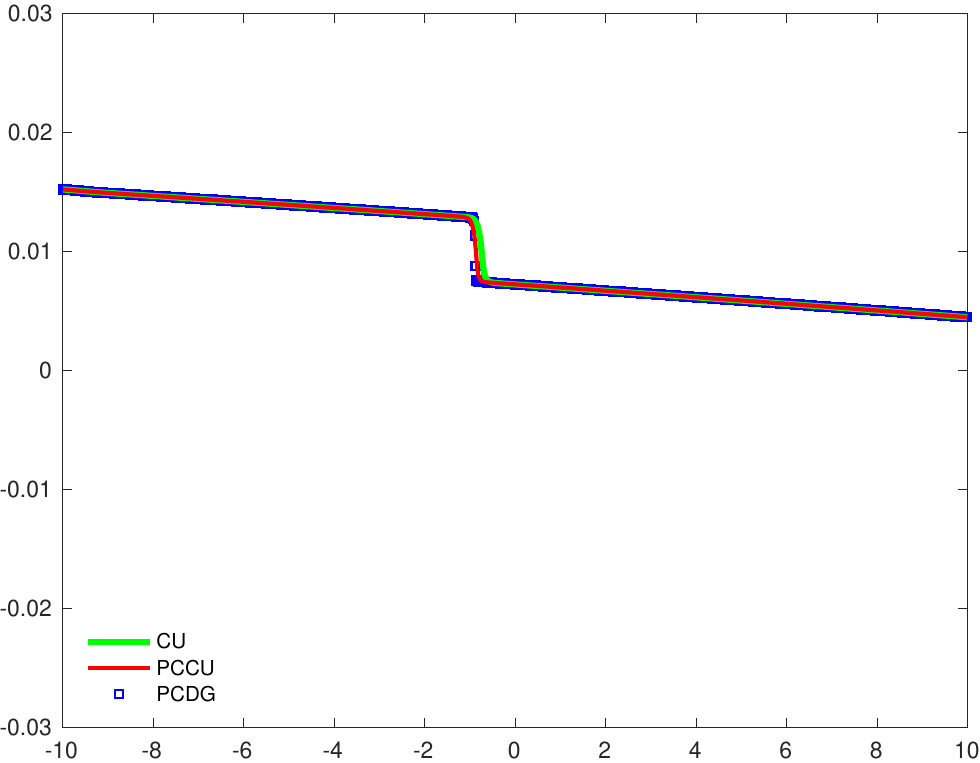}
  }
  \subfigure[interface $w$, $t=10$]{
  \centering
  \includegraphics[width=5.1cm,scale=1]{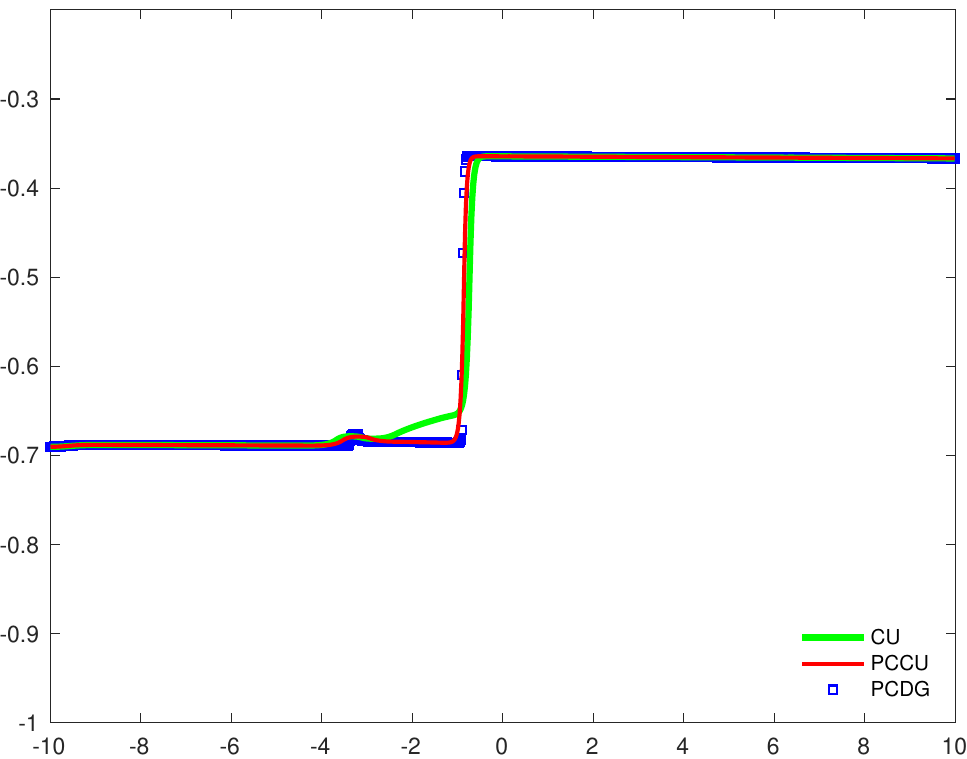}
  }
  \subfigure[water surface $h_1+w$, $t=25$]{
  \centering
  \includegraphics[width=5.1cm,scale=1]{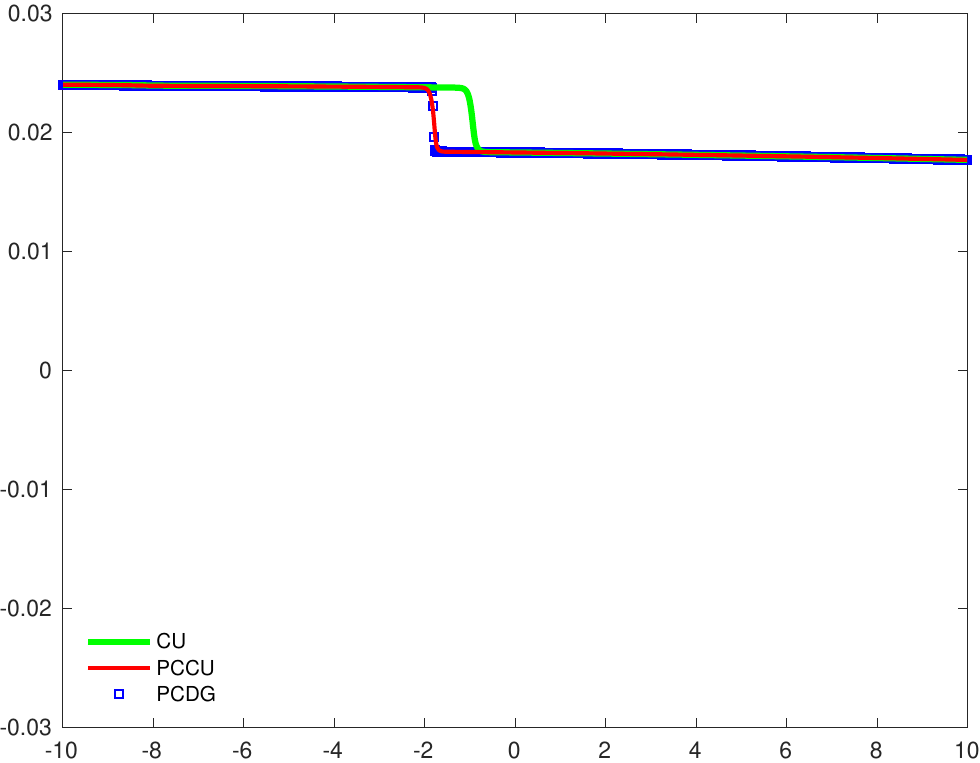}
  }
  \subfigure[interface $w$, $t=25$]{
  \centering
  \includegraphics[width=5.1cm,scale=1]{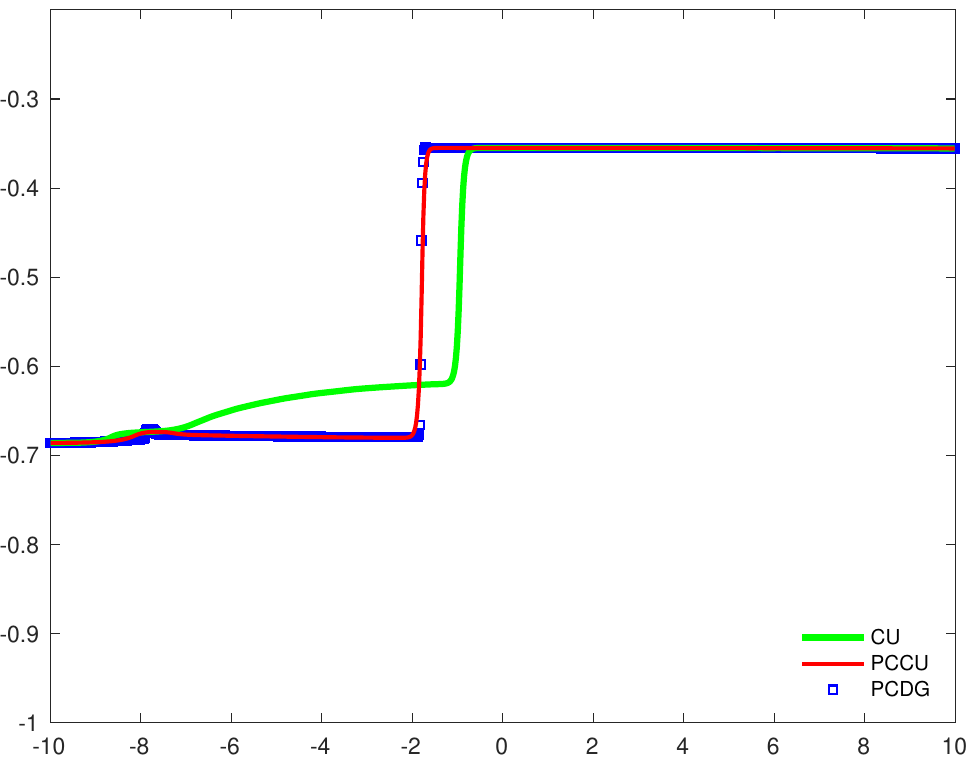}
  }
  \subfigure[water surface $h_1+w$, $t=60$]{
  \centering
  \includegraphics[width=5.1cm,scale=1]{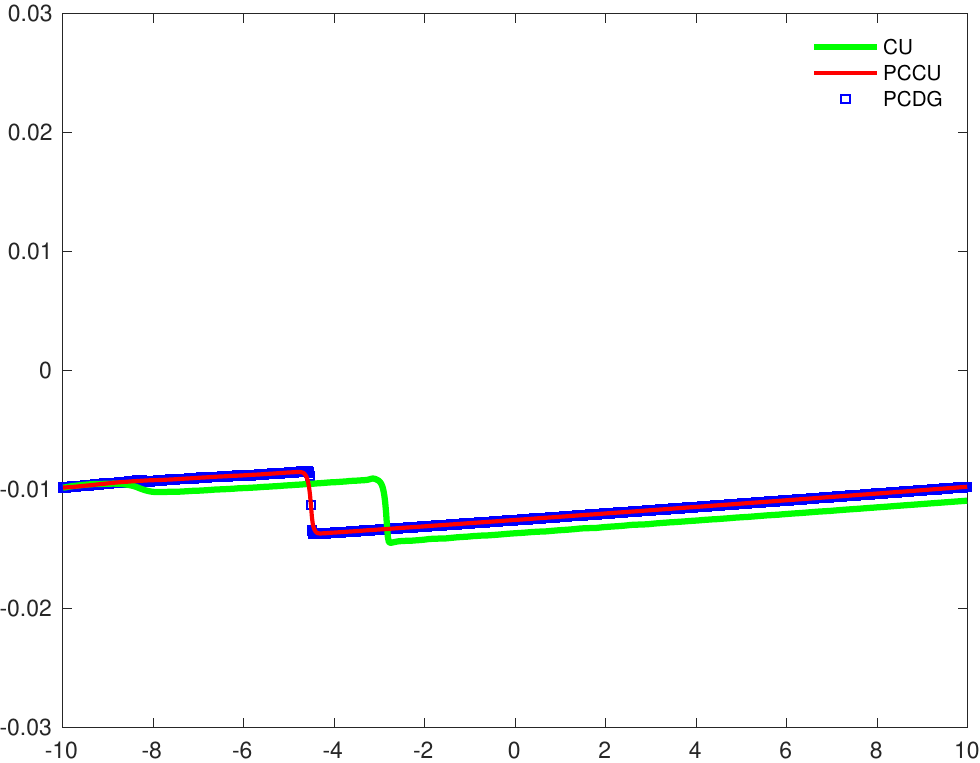}
  }
  \subfigure[interface $w$, $t=60$]{
  \centering
  \includegraphics[width=5.1cm,scale=1]{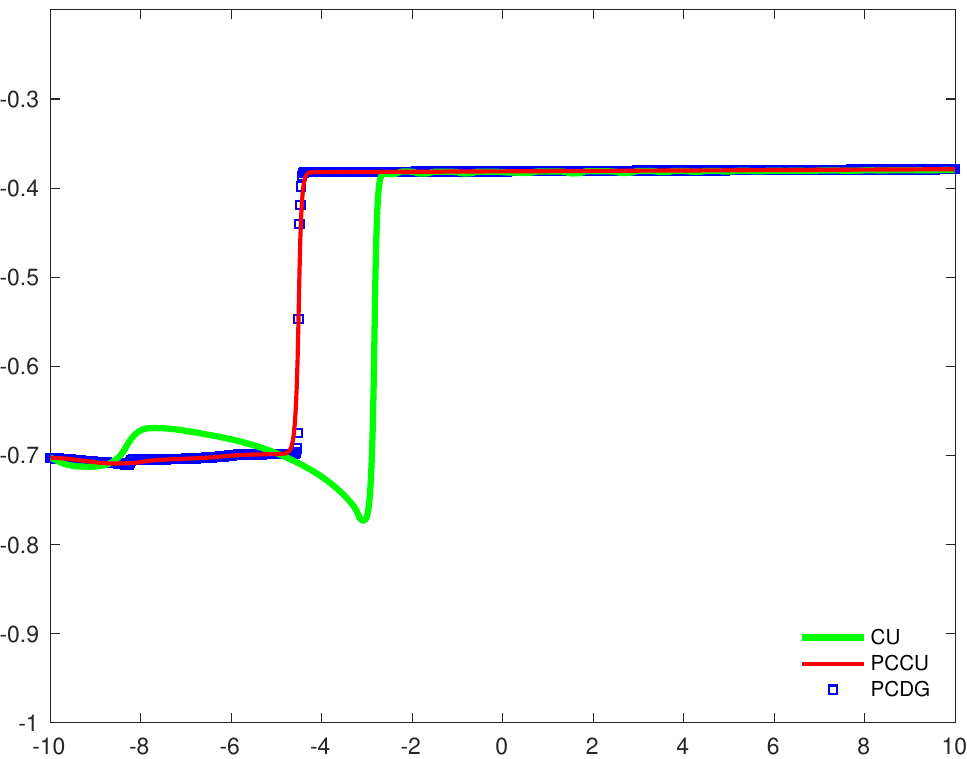}
  }
  \subfigure[water surface $h_1+w$, $t=64$]{
  \centering
  \includegraphics[width=5.1cm,scale=1]{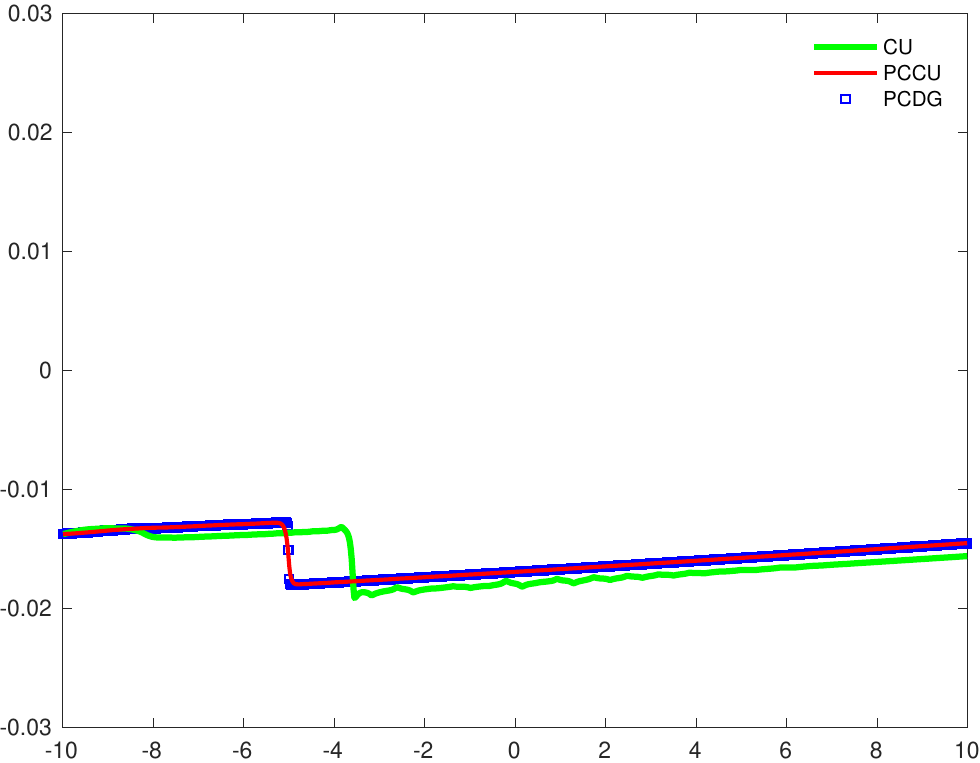}
  }
  \subfigure[interface $w$, $t=64$]{
  \centering
  \includegraphics[width=5.1cm,scale=1]{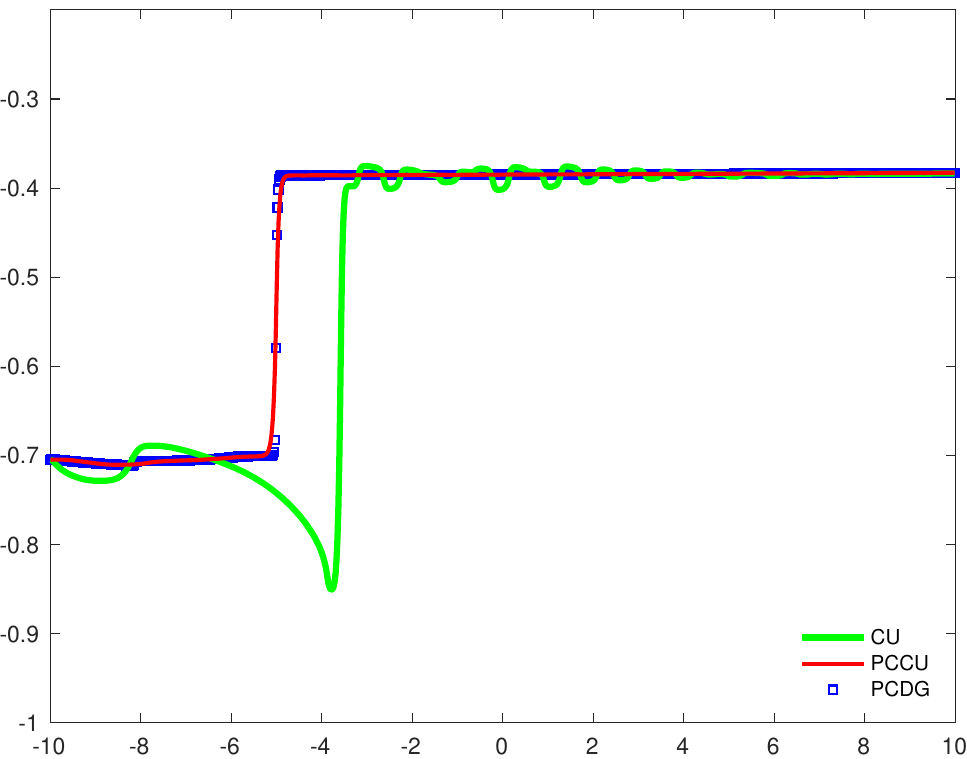}
  }
  \caption{Example \ref{tidal1d}: Numerical solutions of the water surface $h_1+w$ (left) and interface $w$ (right) at different times $t=10,25,60,64$ (from top to bottom), using CU, PCCU, PCDG-still schemes for comparison. }\label{tidal_com}
\end{figure}

\begin{example}{\bf  Test for the moving water well-balanced property}\label{exact1D_mov}
\end{example}
In this example, we demonstrate the well-balanced property for the generalized steady state (\ref{moving_water:2LSWE}), which consists of non-hydrostatic equilibria. A discontinuous bottom topography
\begin{equation}\label{mov_b1D}
  b(x)=\left\{\begin{array}{lll}
    -2,&\text{if}\ x<0, \\
    -1,&\text{otherwise},\\
    \end{array}\right.
\end{equation}
is considered and the initial conditions are given by
\begin{equation}\label{movwb_u1D}
  \begin{aligned}
  &h_1(x,0)=(h_1)_{\text{eq}}(x) = \left\{\begin{array}{lll}
      1.22373355048230,&  \text{if} \  x < 0, \\
      1.44970064153589,&  \text{otherwise},\\
      \end{array}\right.  \\
  &h_2(x,0)=(h_2)_{\text{eq}}(x) = \left\{\begin{array}{lll}
      0.968329515483846,&  \text{if} \  x < 0, \\
      1.12439026921484, &\text{otherwise},\\
      \end{array}\right.\\
  & m_1(x,0) = (m_1)_{\text{eq}}(x) =12,\quad m_2(x,0) = (m_2)_{\text{eq}}(x) =10.
  \end{aligned}
\end{equation}
Note that even though the water height $h_1$, $h_2$ and the bottom topography have a jump discontinuity at the point $x = 0$, it is still exactly in equilibrium for the initial data since these initial values $E_1\equiv50$, $E_2\equiv 55$. Following the same setup in \cite{kurganov2023well}, we take the gravitation constant $g = 10$ and the density ratio $r=0.98$. We simulate the solutions until $t=0.05$ on the computational domain $[-1,1]$ with 100 uniform cells. Table \ref{2LSWE:wbmov_1D} shows the corresponding $L^1$ and $L^{\infty}$ errors at double precision using piecewise linear and quadratic polynomial basis. We can see that the numerical errors based on our moving water equilibria preserving path-conservative discontinuous Galerkin method are at the level of machine accuracy, the moving equilibrium state can be exactly preserved even under a relatively coarse mesh.
\begin{table}[htb]
  \centering
  \caption{Example \ref{exact1D_mov}: $L^1$ and $L^{\infty}$ errors for one-dimensional moving water equilibrium state with the bottom topography  (\ref{mov_b1D}), using PCDG-moving method.}\label{2LSWE:wbmov_1D}
  \begin{tabular}{ c c c c c c c c}
   \toprule
      &{error}&{$h_1$}&{$m_1$}&{$h_2$}&{$m_2$}&{$E_1$}&{$E_2$}\\
    \midrule
    \multirow{2}{*}{$P^1$ }&$L^1$        & 9.05E-14 & 1.81E-13 & 6.54E-14 & 1.92E-13 & 4.01E-12 & 4.18E-12\\
                          ~&$L^{\infty}$ & 9.46E-14 & 4.26E-13 & 1.05E-13 & 6.00E-13 & 2.95E-12 & 3.30E-12\\
    \midrule
    \multirow{2}{*}{$P^2$ }&$L^1$        & 7.36E-14 & 6.71E-13 & 7.12E-14 & 6.69E-13 & 1.10E-12 & 9.45E-13\\
                           &$L^{\infty}$ & 1.29E-13 & 1.15E-12 & 1.37E-13 & 1.39E-12 & 3.52E-12 & 3.46E-12\\
    \bottomrule
  \end{tabular}
\end{table}

\begin{example}{\bf Small perturbation of moving water equilibrium}\label{2lswe:permoving}
\end{example}
Almost the same setup in Example \ref{exact1D_mov}, to investigate the performance of our constructed methods, we add a small perturbation to the upper layer depth $(h_1)_{\text{eq}}$
\begin{equation}\label{movper_u1D}
  \begin{aligned}
  &h_1(x,0)=(h_1)_{\text{eq}}(x) + \left\{\begin{array}{lll}
      0.001,&  \text{if} \  x \in [-0.6,-0.5], \\
      0,&  \text{otherwise},\\
      \end{array}\right.  \\
  &h_2(x,0)=(h_2)_{\text{eq}}(x),\quad m_1(x,0) = (m_1)_{\text{eq}}(x),\quad m_2(x,0) = (m_2)_{\text{eq}}(x).
  \end{aligned}
\end{equation}
The final time is set as $t=0.02,0.05,0.08$. We use both of our proposed well-balanced schemes consisting of 200 uniform cells in contrast with the results resolved by the  PCDG-moving scheme involving 1000 uniform grids. Fig. \ref{mov_percom} shows the difference $h_1(x,t) - (h_1)_{\text{eq}}$ at different times. Although the PCDG-still scheme exhibits good behaviors in most experiments, it shows a failed simulation for non-hydrostatic flows and generates unphysical waves beyond the propagating perturbation. In contrast, our PCDG-moving method gives well-resolved solutions that agree with the reference solutions under a relatively coarse mesh, and the solutions are free of spurious numerical oscillations near the discontinuity thanks to the well-balanced property.

\begin{figure}[htb!]
  \centering
  \subfigure[ $t=0$ ]{
  \centering
  \includegraphics[width=6.5cm,scale=1]{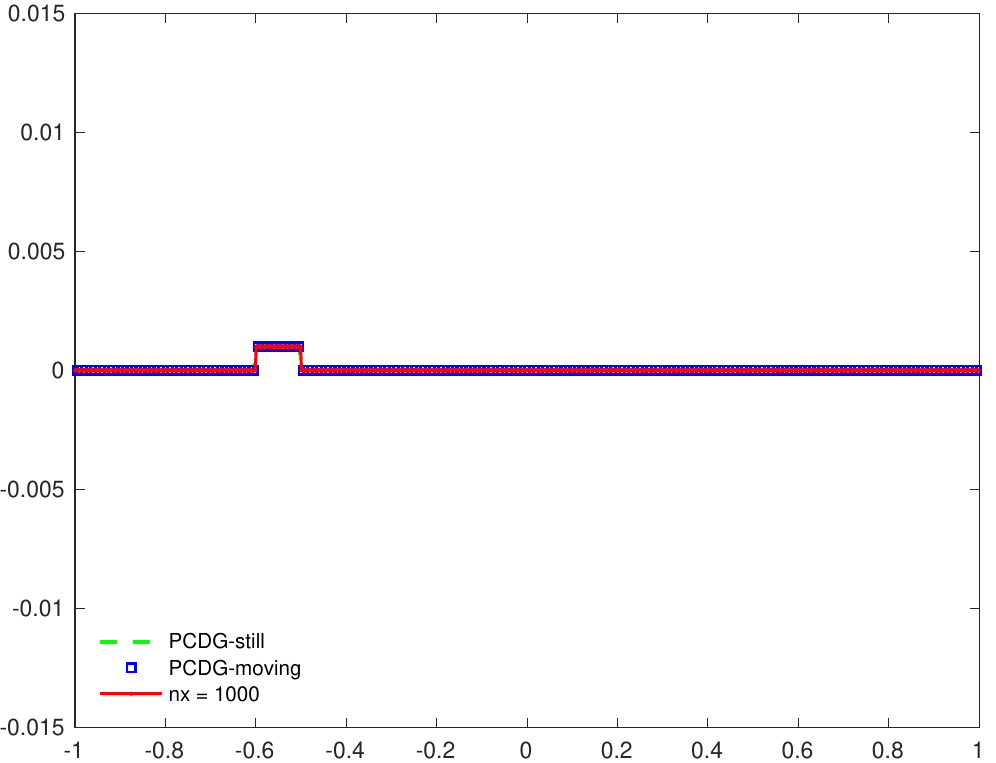}
  }
  \subfigure[ $t=0.02$ ]{
  \centering
  \includegraphics[width=6.5cm,scale=1]{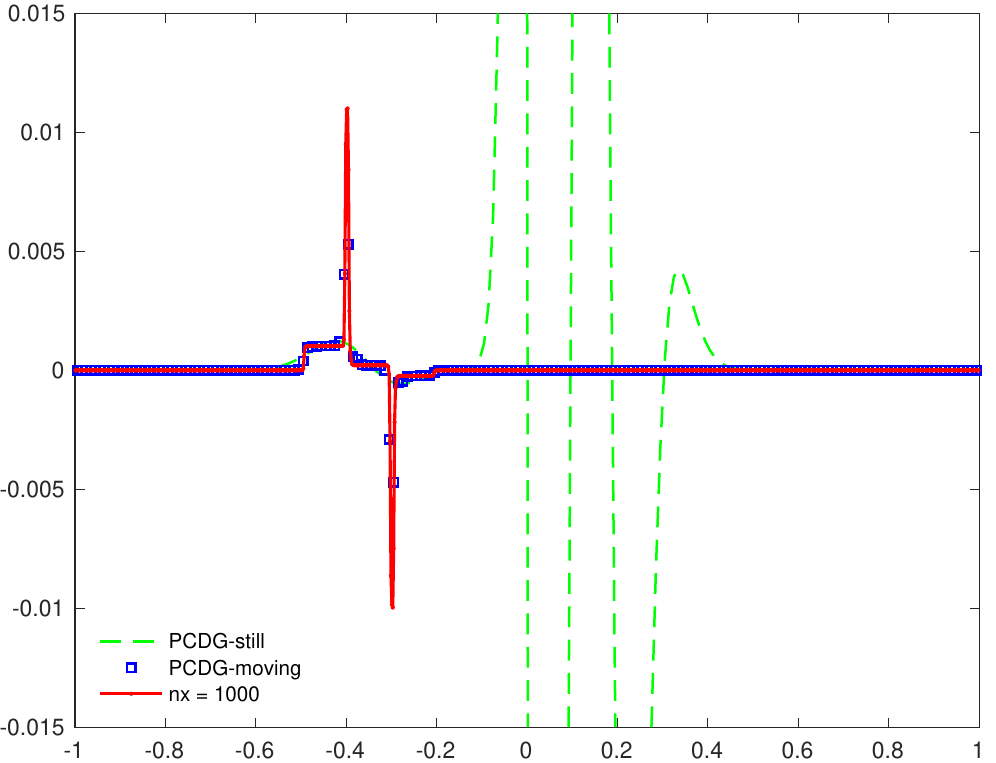}
  }
  \subfigure[ $t=0.05$ ]{
  \centering
  \includegraphics[width=6.5cm,scale=1]{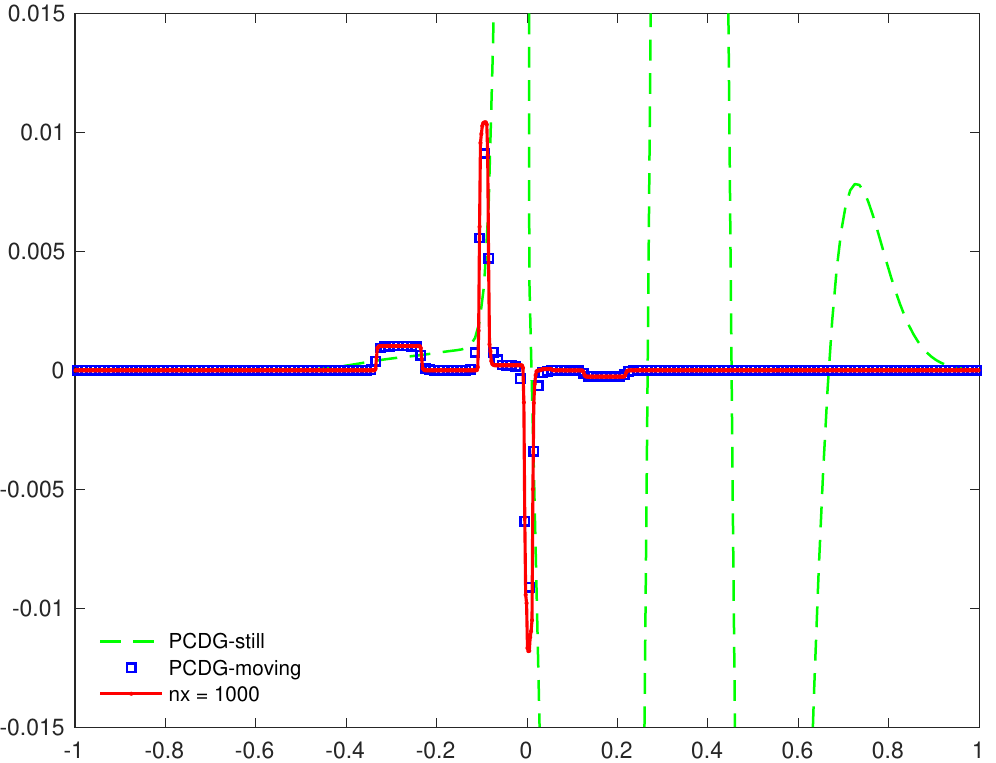}
  }
  \subfigure[ $t=0.08$ ]{
  \centering
  \includegraphics[width=6.5cm,scale=1]{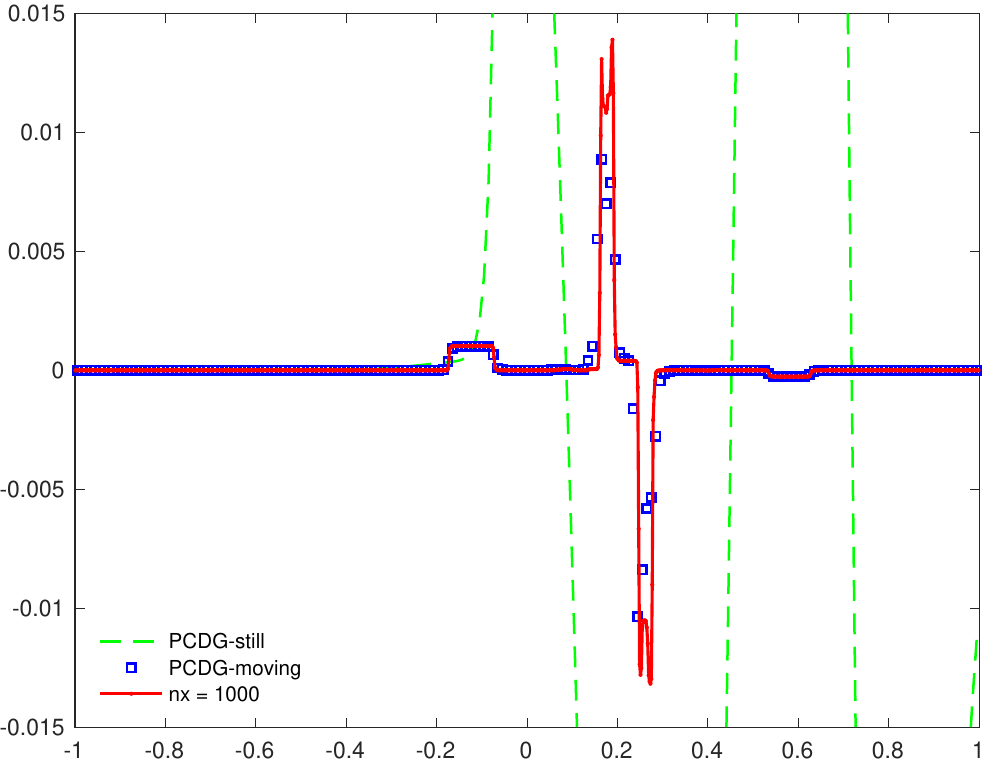}
  }
  \caption{Example \ref{2lswe:permoving}: Numerical solutions of the difference $h_1(x,t) - (h_1)_{\text{eq}}$ at different times $t=0,0.02,0.05,0.08$, computed by PCDG-still, PCDG-moving schemes with 200 and 1000 grid cells. }\label{mov_percom}
\end{figure}

\begin{example}{\bf Riemann problem over discontinuous bottom topography}\label{rie1d}
\end{example}
To illustrate the performance of our developed well-balanced PCDG scheme when containing discontinuous solutions, we implement two dam breaking test cases used in \cite{chu2022fifth} over a discontinuous bottom topography on a domain $[-1,1]$
\begin{equation}\label{riemov_b1d}
  b(x)=\left\{\begin{array}{lll}
    -2,&\text{if}\ x<0, \\
    -1.5,&\text{otherwise}.\\
    \end{array}\right.
\end{equation}
Two different Riemann initial conditions are taken as follows:
\begin{equation}\label{riemov_u1}
\text{Test 1:} \ (h_1,m_1,h_2, m_2)(x,0) = \left\{\begin{array}{lll}
    (1.0,1.5,1.0,1.0),&  \text{if} \  x < 0, \\
    (0.8,1.2,1.2,1.8),&\text{otherwise},\\
    \end{array}\right.
\end{equation}
\begin{equation}\label{riemov_u2}
\text{Test 2:} \ (h_1,m_1,h_2, m_2)(x,0) = \left\{\begin{array}{lll}
    (1.5,1.1,1.0,1.4),&  \text{if} \  x < 0, \\
    (1.2,1.6,0.9,1.2),&\text{otherwise}.\\
    \end{array}\right.
\end{equation}
The constant gravitational acceleration is $g = 10$ and the density ratio is $r=0.98$. The final time of our simulation is set as $t=0.1$.  
Fig. \ref{riemov1} displays the numerical results of the conservative and equilibrium variables for Test 1, calculated by our developed methods on 1000 uniform grid cells. For comparison, we also run the same tests by the second-order PCCU scheme. The corresponding solutions for Test 2 are plotted in Fig. \ref{riemov2}. We can see that both the PCCU and PCDG-still schemes exist some unphysical spikes and oscillations located at the point $x=0$, where the bottom function $b$ is discontinuous. By contrast, more accurate fluid motion captured by the moving water equilibria preserving PCDG method can be observed even though the bottom topography is in poor condition.

\begin{figure}[htb!]
 \centering
 \subfigure[upper layer $h_1$]{
 \centering
 \includegraphics[width=6.5cm,scale=1]{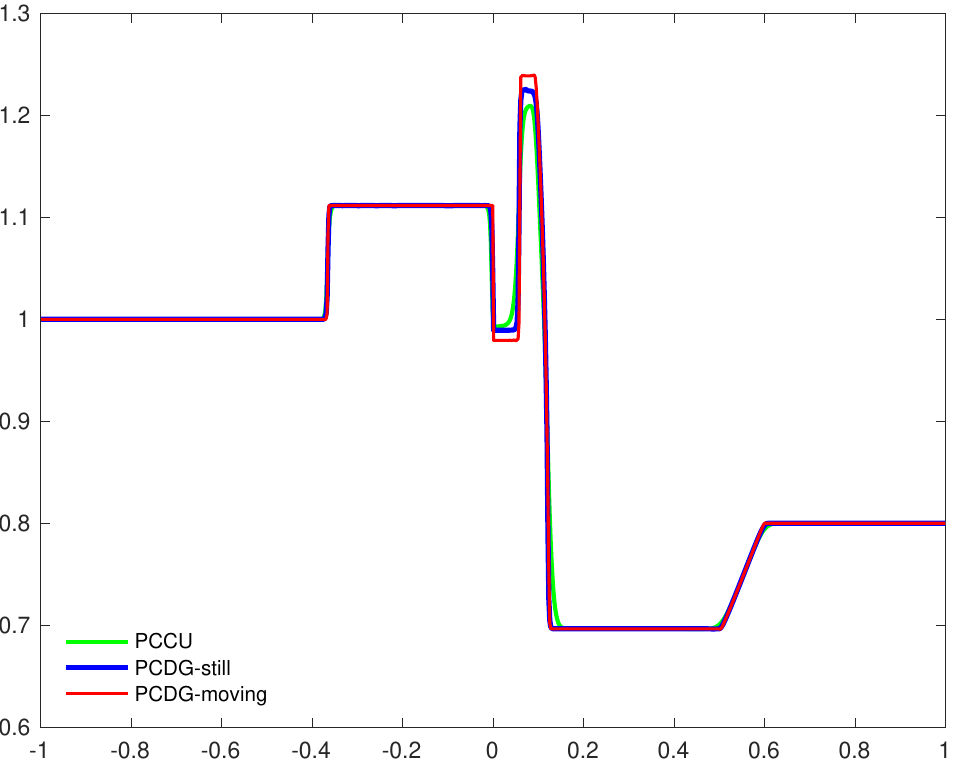}
 }
 \subfigure[discharge $m_1$]{
 \centering
 \includegraphics[width=6.5cm,scale=1]{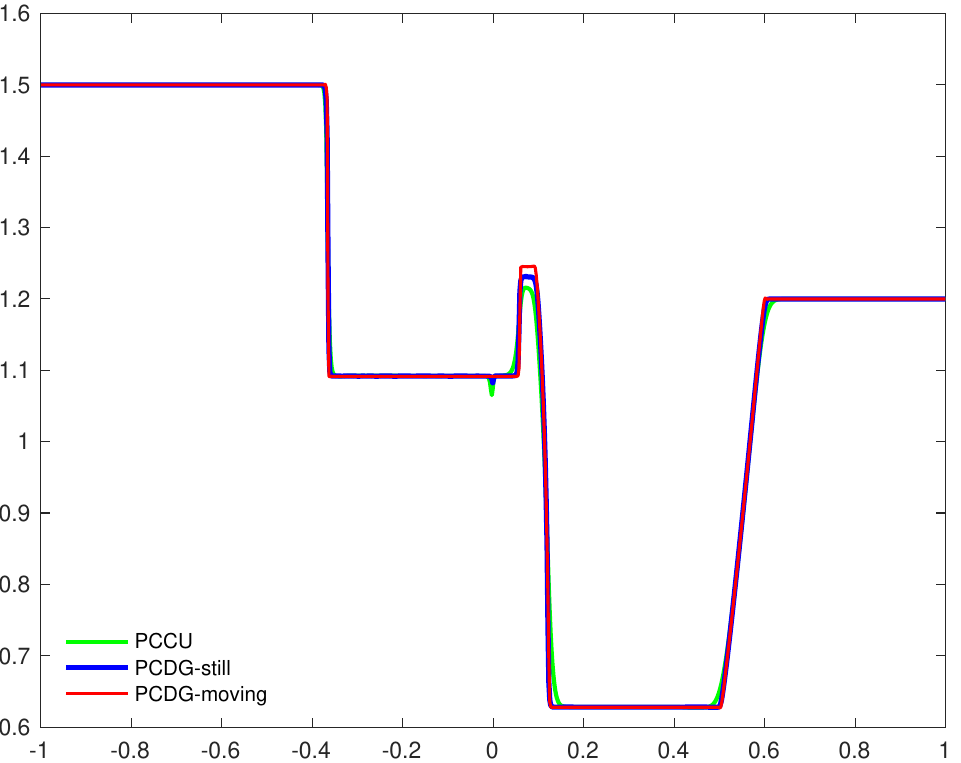}
 }
 \subfigure[ energy $E_1$]{
 \centering
 \includegraphics[width=6.5cm,scale=1]{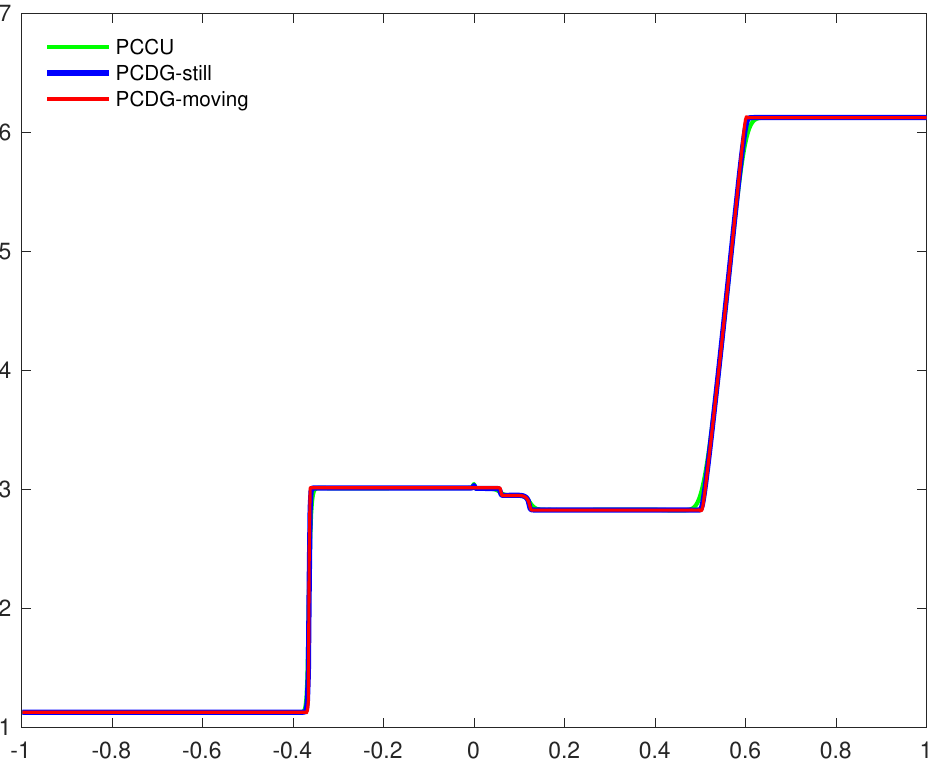}
 }
 \subfigure[lower layer $h_2$]{
 \centering
 \includegraphics[width=6.5cm,scale=1]{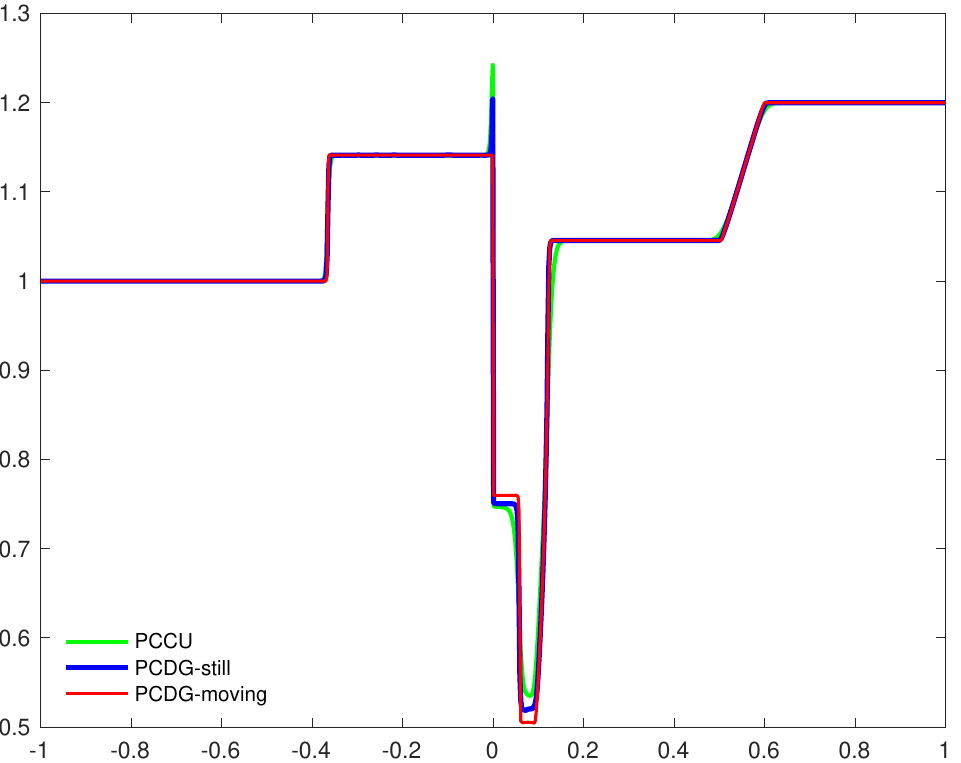}
 }
 \subfigure[discharge $m_2$]{
 \centering
 \includegraphics[width=6.5cm,scale=1]{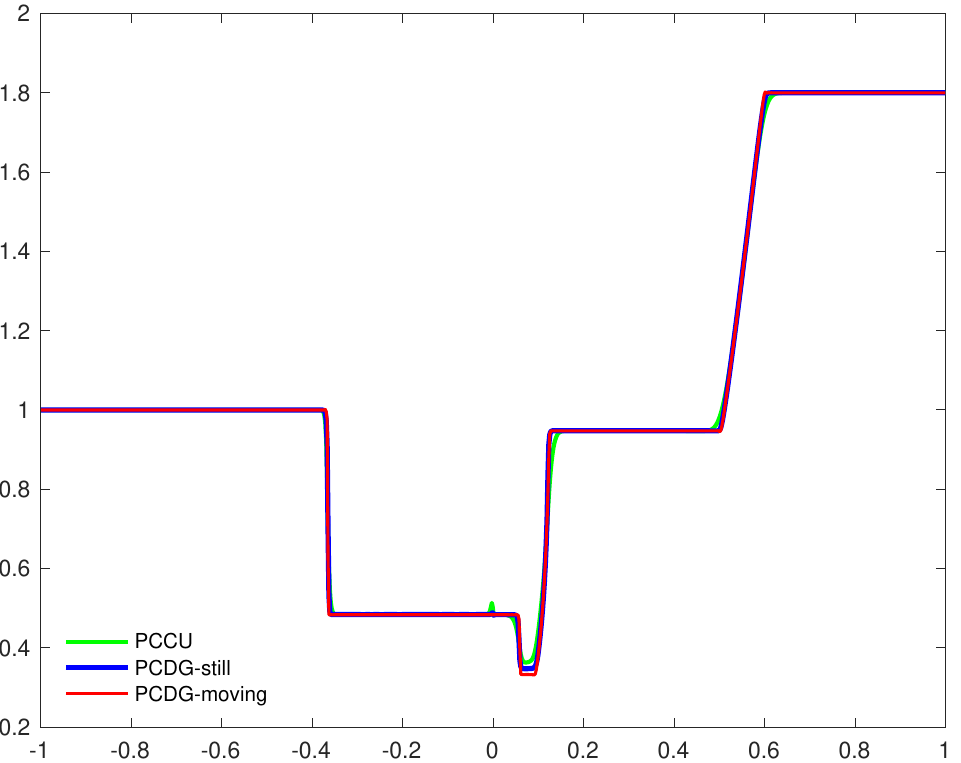}
 }
 \subfigure[energy $E_2$]{
 \centering
 \includegraphics[width=6.5cm,scale=1]{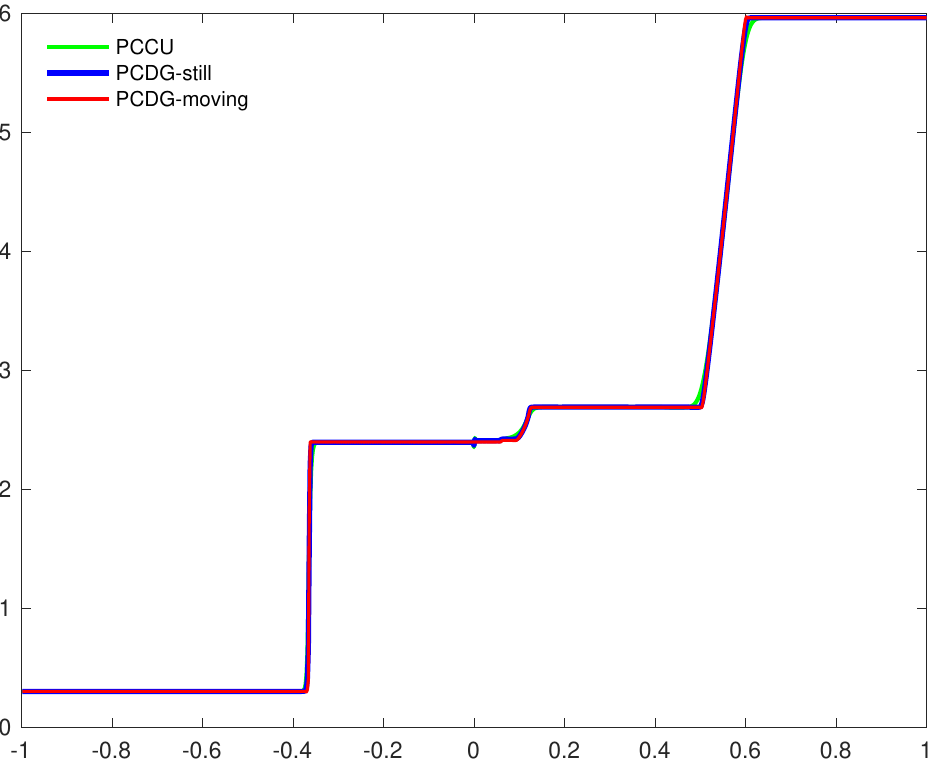}
 }
 \caption{Example \ref{rie1d}: Numerical solutions for Test 1 (\ref{riemov_u1}), computed by PCCU, PCDG-still, PCDG-moving schemes, with 1000 grid cells. }\label{riemov1}
\end{figure}

\begin{figure}[htb!]
  \centering
  \subfigure[upper layer $h_1$]{
  \centering
  \includegraphics[width=6.5cm,scale=1]{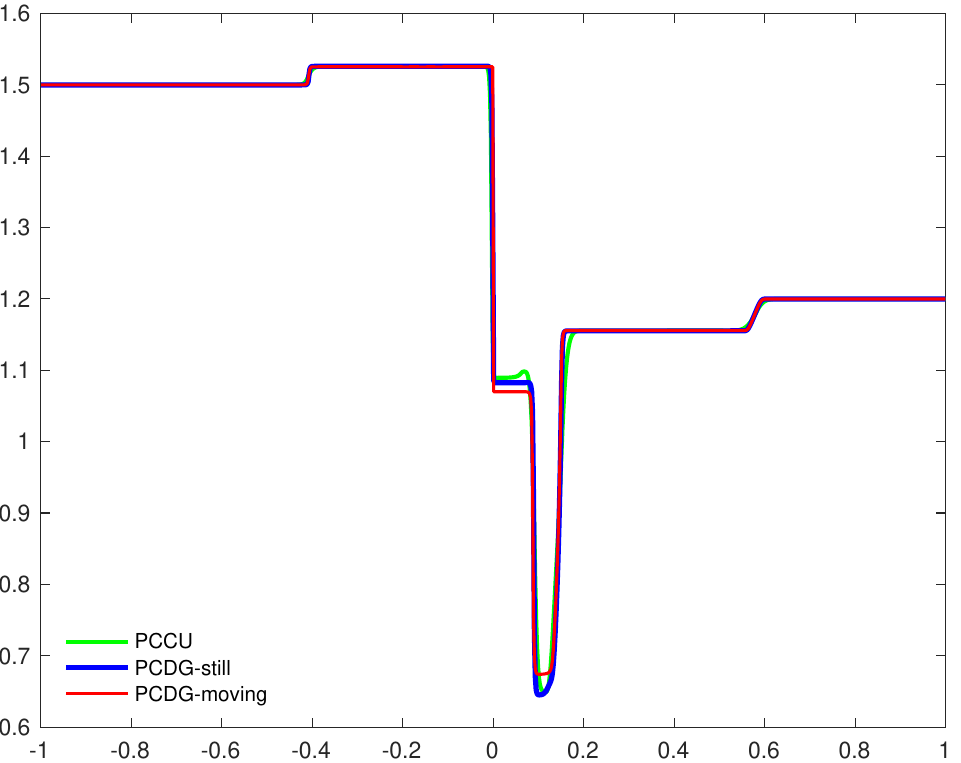}
  }
  \subfigure[discharge $m_1$]{
  \centering
  \includegraphics[width=6.5cm,scale=1]{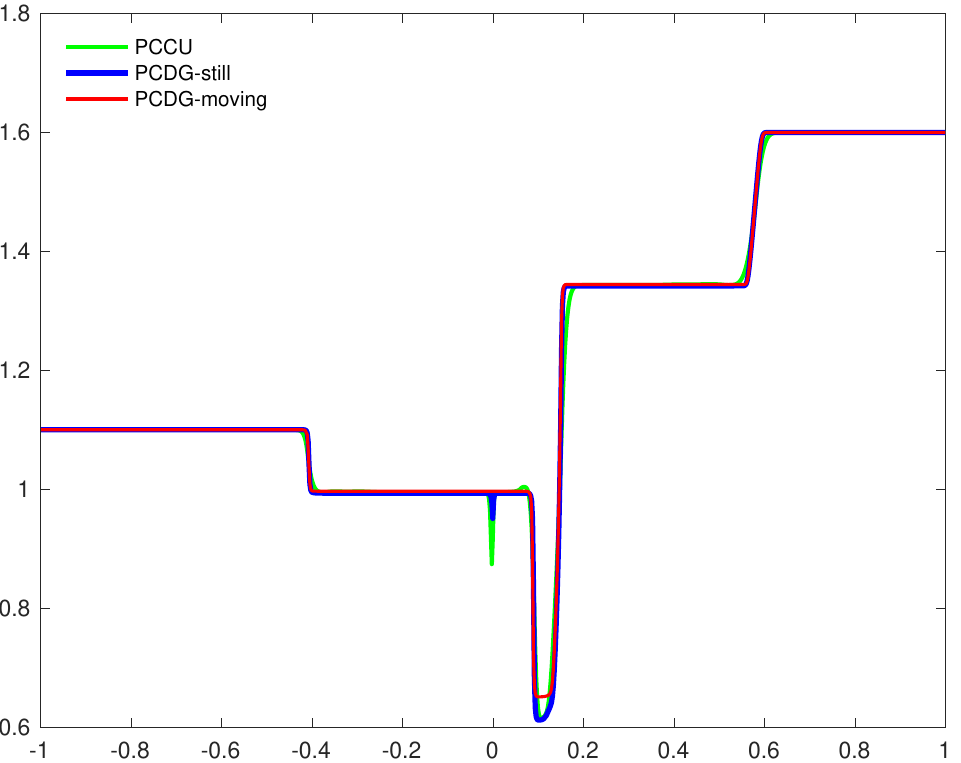}
  }
  \subfigure[ energy $E_1$]{
  \centering
  \includegraphics[width=6.5cm,scale=1]{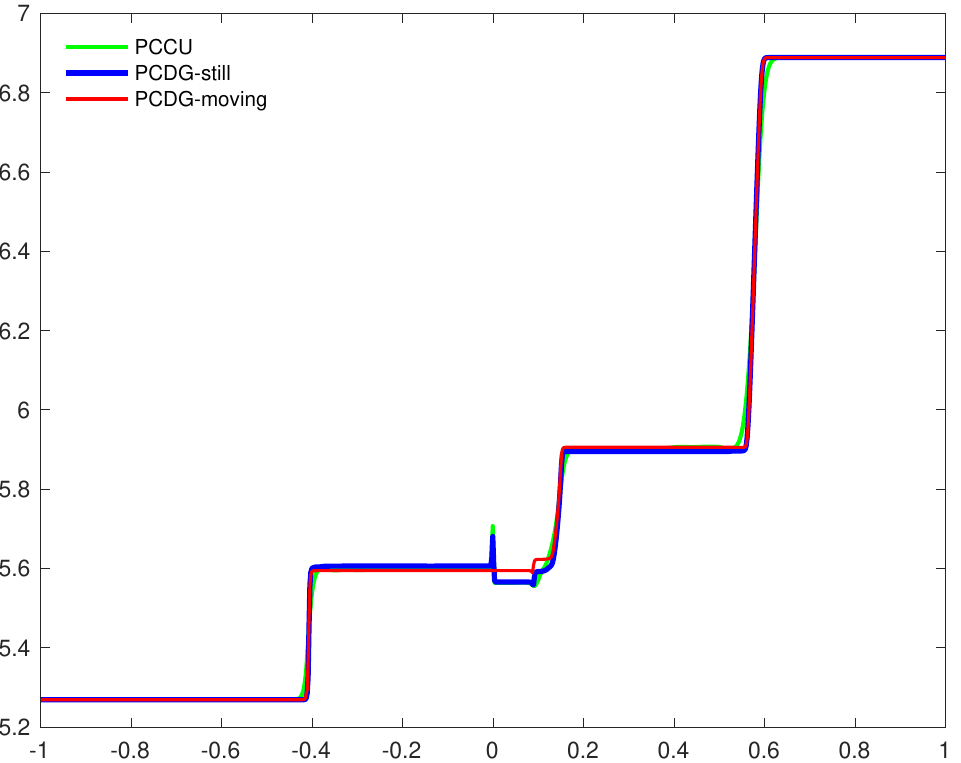}
  }
  \subfigure[lower layer $h_2$]{
  \centering
  \includegraphics[width=6.5cm,scale=1]{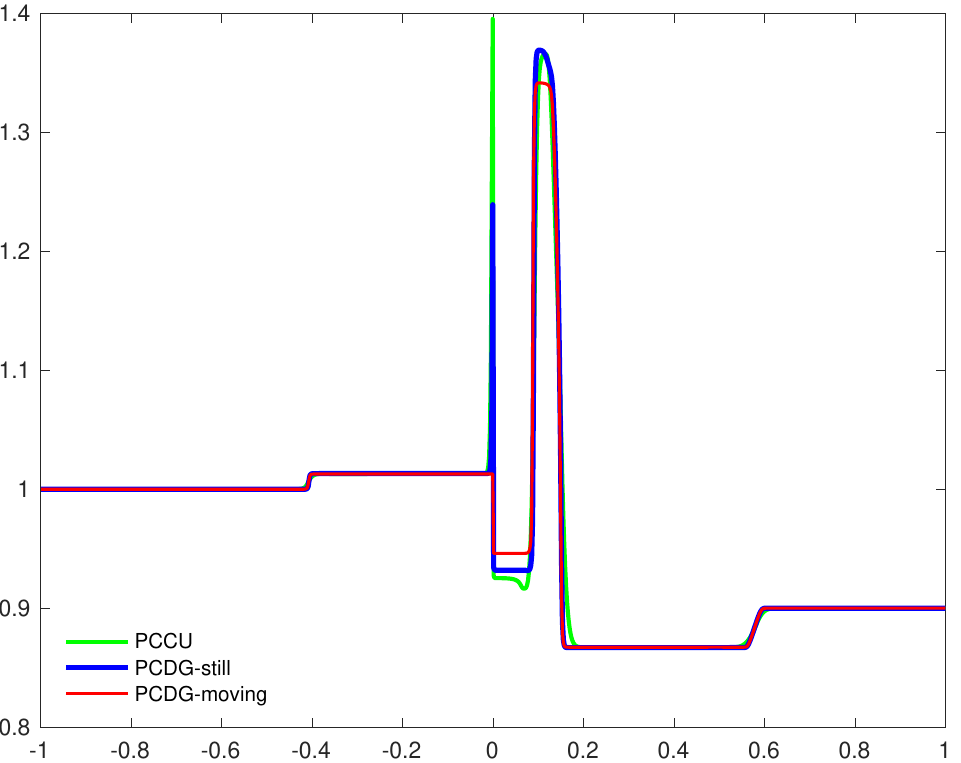}
  }
  \subfigure[discharge $m_2$]{
  \centering
  \includegraphics[width=6.5cm,scale=1]{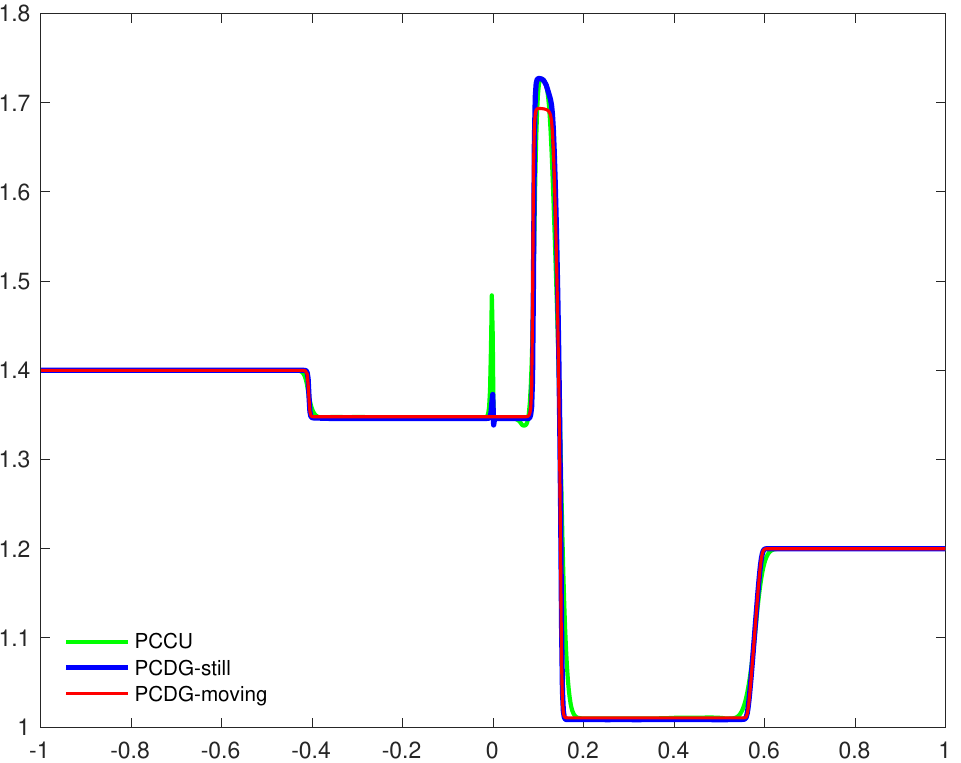}
  }
  \subfigure[energy $E_2$]{
  \centering
  \includegraphics[width=6.5cm,scale=1]{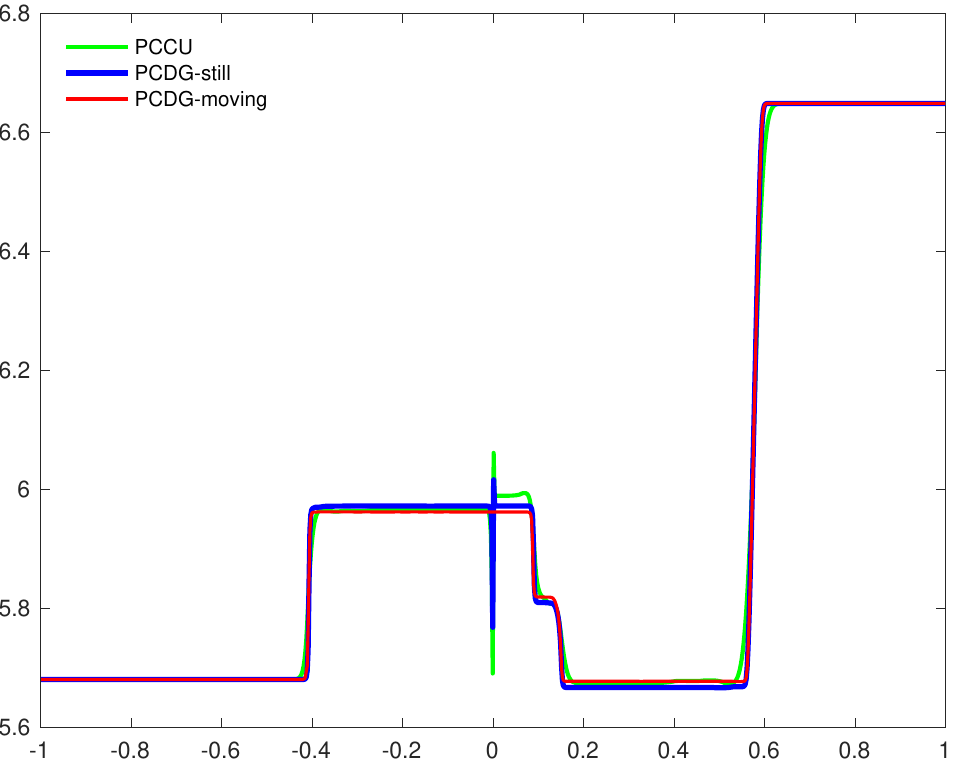}
  }
  \caption{Example \ref{rie1d}: Numerical solutions for Test 2 (\ref{riemov_u2}), computed by PCCU, PCDG-still, PCDG-moving schemes with 1000 grid cells. }\label{riemov2}
\end{figure}

\subsection{Two-dimensional tests}
As there is no general form of two-dimensional generalized moving water equilibrium states, we restrict ourselves to the still water equilibria preserving path-conservative discontinuous Galerkin method for two-dimensional cases.
\begin{example}{\bf Interface propagation over a flat bottom}\label{interpro_flat2d}
\end{example}
This example is a 2-D extension of the 1-D Example \ref{pro1d}, which is taken from \cite{kurganov2009central}, a round-shape interface propagates in the northeast direction. We consider the following initial conditions
\begin{equation}\label{pro_flat_ini}
(h_1,m_1,n_1,w,m_2,n_2)(x,y,0) = \left\{\begin{array}{lll}
    (0.50, 1.250, 1.250, -0.50, 1.250, 1.250), &\text{if} \  (x,y)\in \Omega, \\
    (0.45, 1.125, 1.125, -0.45, 1.375, 1.375), &\text{otherwise},\\
    \end{array}\right.
\end{equation}
over the flat bottom topography $b(x,y)=-1$. Here
\begin{equation}\label{domain}
  \Omega = \{x<-0.5,y<0\}\cup \{(x+0.5)^2 + (y+0.5)^2 <0.25\} \cup \{x<0,y<-0.5\},
\end{equation}
and the computational domain is $[-0.55,0.7]\times [-0.55,0.7]$. We take the gravitation constant $g = 10$ and the density ratio $r=0.98$. We compute the solutions until $t=0.1$ on a sequence of 250$\times$250 and 500$\times$500 uniform cells. Four waves are developed and propagated with four different characteristic speeds along the dominant diagonal $y = x$. Fig. \ref{propa_flat} presents the contours of the water surface $h_1+w$ and the upper layer $h_1$. As in the one-dimensional case, an intermediate state emerges, seeing the corresponding results along the line $y = x$ in Fig. \ref{propa_flat1}. We can observe that the numerical results reach an amicable agreement with the reference solution, and the discontinuity is well captured without oscillation.

\begin{figure}[htb!]
  \centering
  \subfigure[water surface $h_1+w$]{
  \centering
  \includegraphics[width=7cm,scale=1]{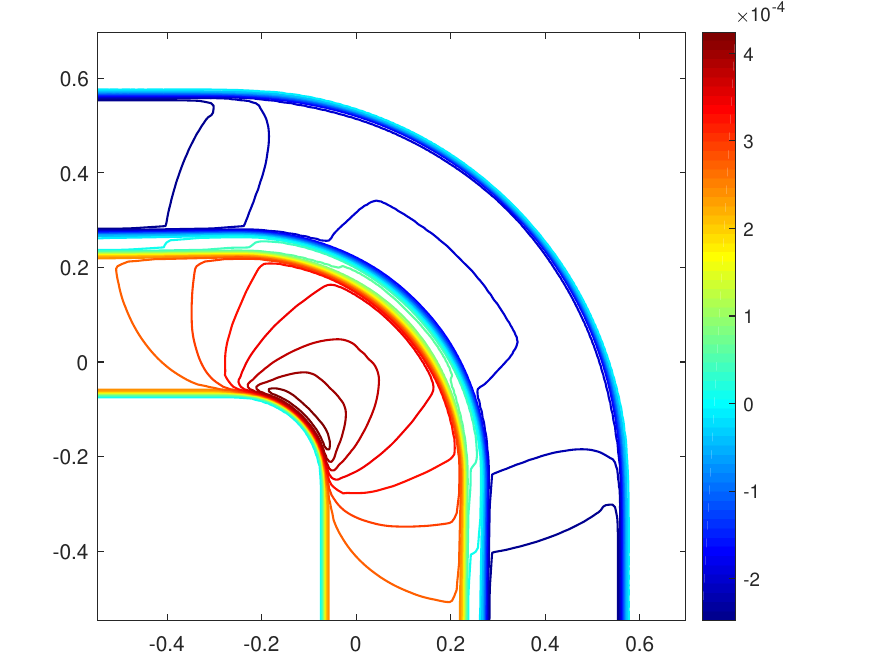}
  }
  \subfigure[upper layer $h_1$]{
  \centering
  \includegraphics[width=6.7cm,scale=1]{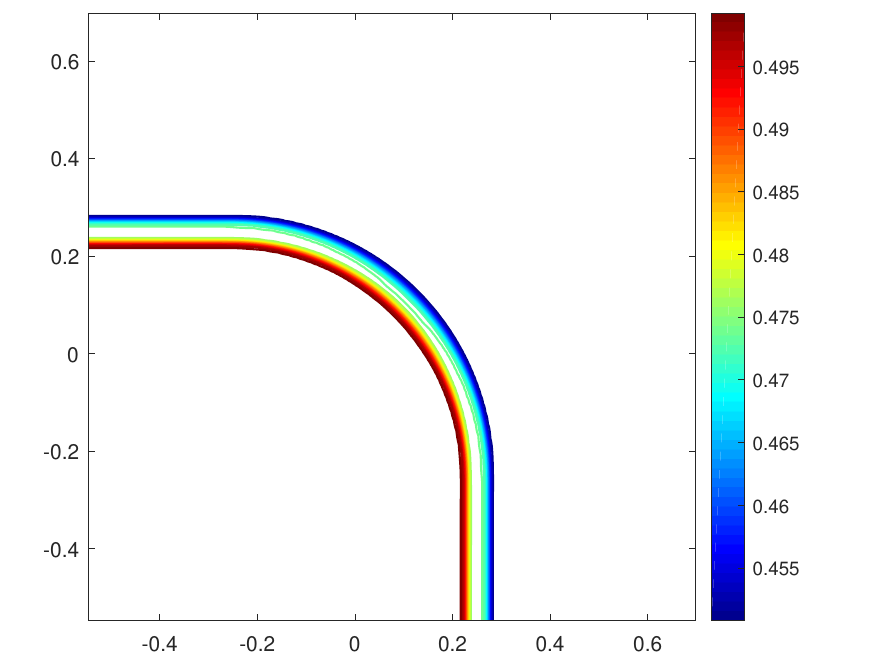}
  }
  \subfigure[water surface $h_1+w$]{
  \centering
  \includegraphics[width=7cm,scale=1]{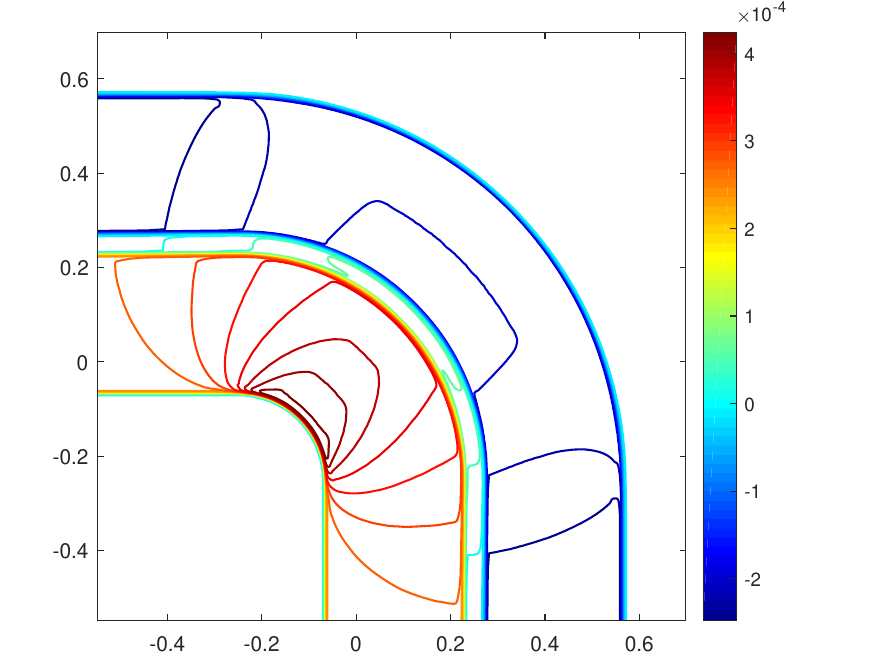}
  }
  \subfigure[upper layer $h_1$]{
  \centering
  \includegraphics[width=6.7cm,scale=1]{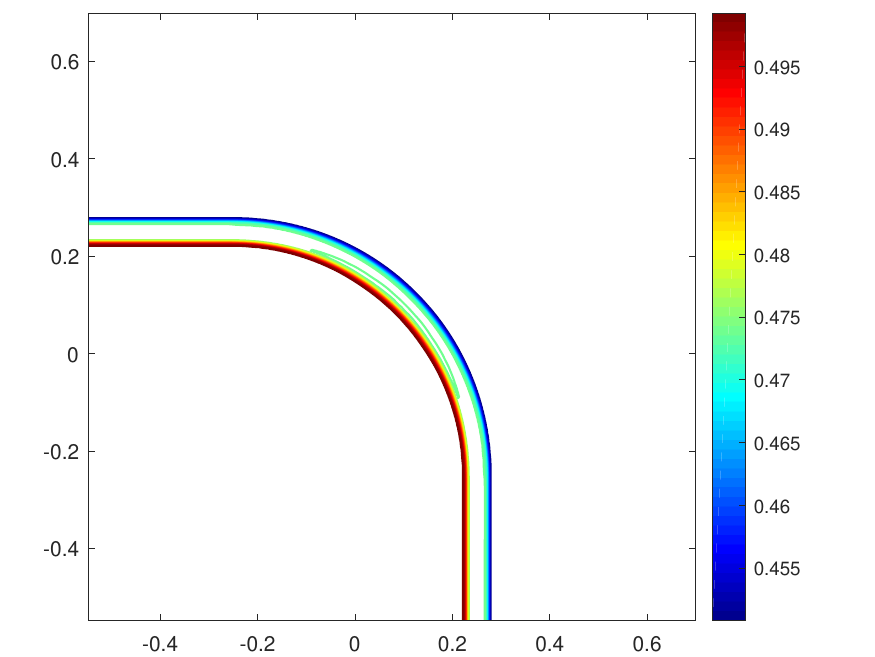}
  }
  \caption{Example \ref{interpro_flat2d}: The contours of the water surface $h_1+w$ (left) from -3E-4 to 4.5E-4; the upper layer $h_1$ (right) from 0.449 to 0.501, with the initial value (\ref{pro_flat_ini}). 30 uniformly spaced contour lines with 250$\times$250 (top) and 500$\times$500 cells (bottom).  }\label{propa_flat}
\end{figure}

\begin{figure}[htb!]
  \centering
  \subfigure[water surface $h_1+w$]{
  \centering
  \includegraphics[width=7cm,scale=1]{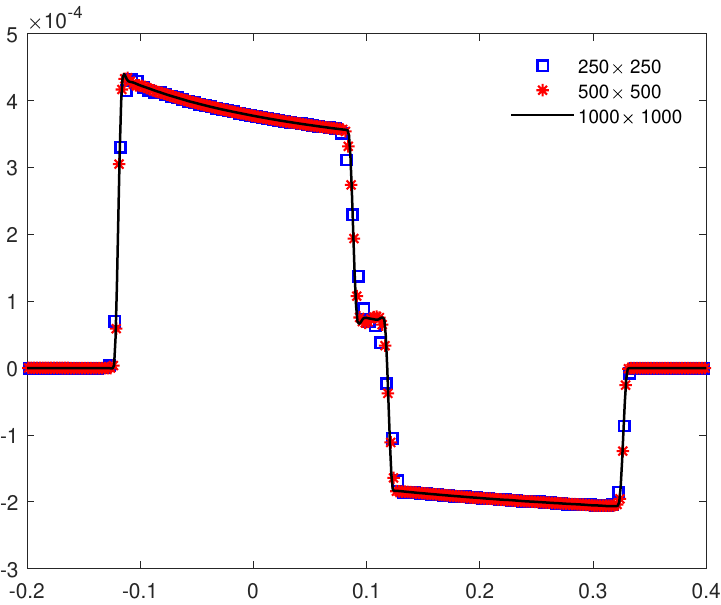}
  }
  \subfigure[upper layer $h_1$]{
  \centering
  \includegraphics[width=7.3cm,scale=1]{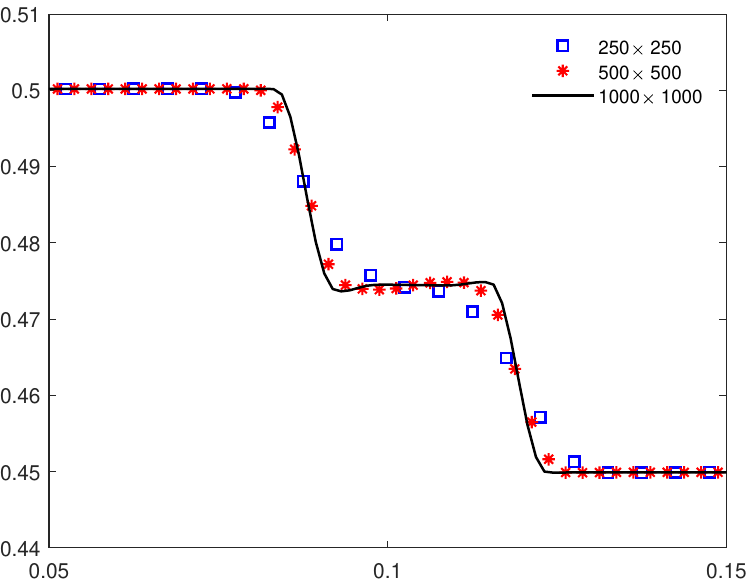}
  }
  \caption{Example \ref{interpro_flat2d}: Scatter plot of the cross sections of the water surface $h_1 + w$ and the upper layer $h_1$ along the line $y=x$, with the initial value (\ref{pro_flat_ini}), using 250$\times$250, 500$\times$500, 1000$\times $1000 cells for comparison. }\label{propa_flat1}
\end{figure}

\begin{example}{\bf Interface propagation over a nonflat bottom}\label{interpro_nonflat2d}
\end{example}
This example we test is the extension of the previous example to the case of a nonflat bottom topography. Gaussian-shaped bottom topography
$$b(x,y)=0.05\exp^{-100(x^2+y^2)}-1,$$
is considered and the initial conditions are given by
\begin{equation}\label{pro_nonflat_ini}
(h_1,u_1,v_1,w,u_2,v_2)(x,y,0) = \left\{\begin{array}{lll}
    (0.50, 2.5, 2.5, -0.50, 2.5, 2.5),&  \text{if} \  (x,y)\in \Omega, \\
    (0.45, 2.5, 2.5, -0.45, 2.5, 2.5), &\text{otherwise},\\
    \end{array}\right.
\end{equation}
where $\Omega$ is the same as in (\ref{domain}). We take the computational domain  $[-0.55,0.7]\times [-0.55,0.7]$, the gravitation constant $g = 10$ and the density ratio $r=0.98$. We apply our resulting scheme to calculate the solutions up to $t = 0.1$ with 250$\times$250 and 500$\times$500 uniform rectangular meshes respectively, and plotted the water surface $h_1+w$ and the upper layer $h_1$ in Fig. \ref{propa_nonflat}. A more complicated wave structure can be observed due to the existence of the nonflat bottom function. It can realize non-oscillatory and match pretty well the results reported in \cite{kurganov2009central}, which indicates the effectiveness of our proposed well-balanced PCDG scheme.

\begin{figure}[htb!]
  \centering
  \subfigure[water surface $h_1+w$]{
  \centering
  \includegraphics[width=7cm,scale=1]{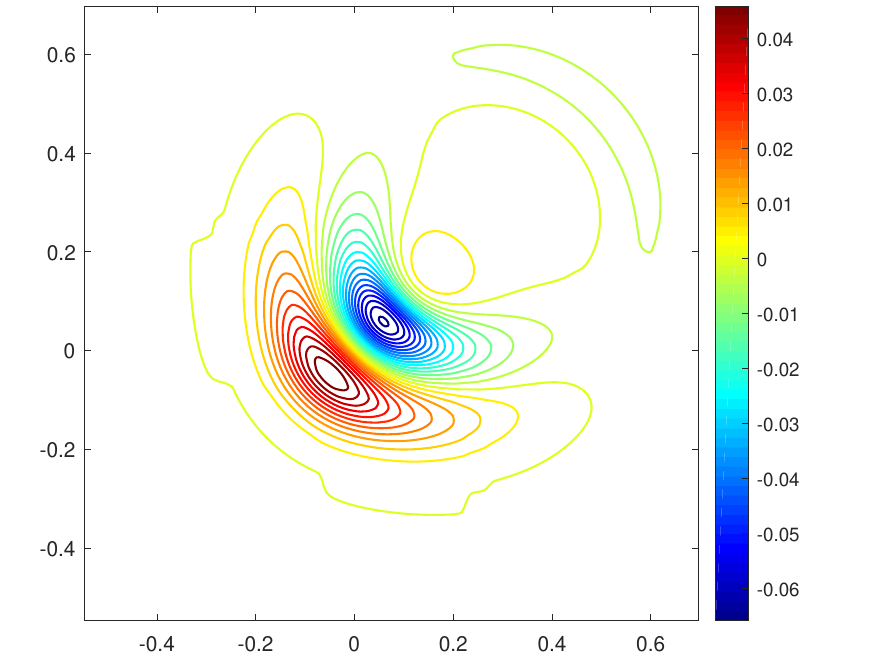}
  }
  \subfigure[upper layer $h_1$]{
  \centering
  \includegraphics[width=7cm,scale=1]{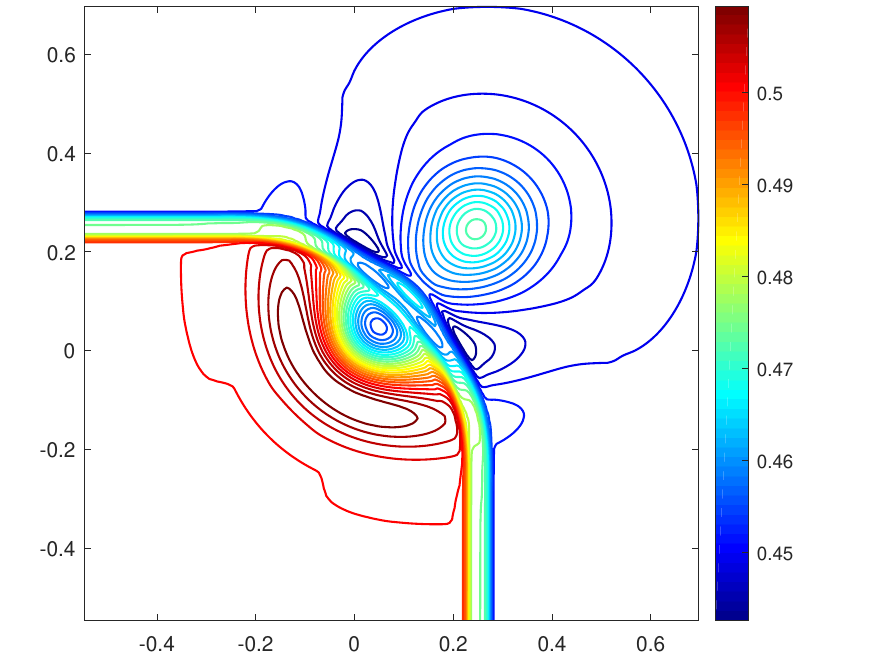}
  }
  \subfigure[water surface $h_1+w$]{
  \centering
  \includegraphics[width=7cm,scale=1]{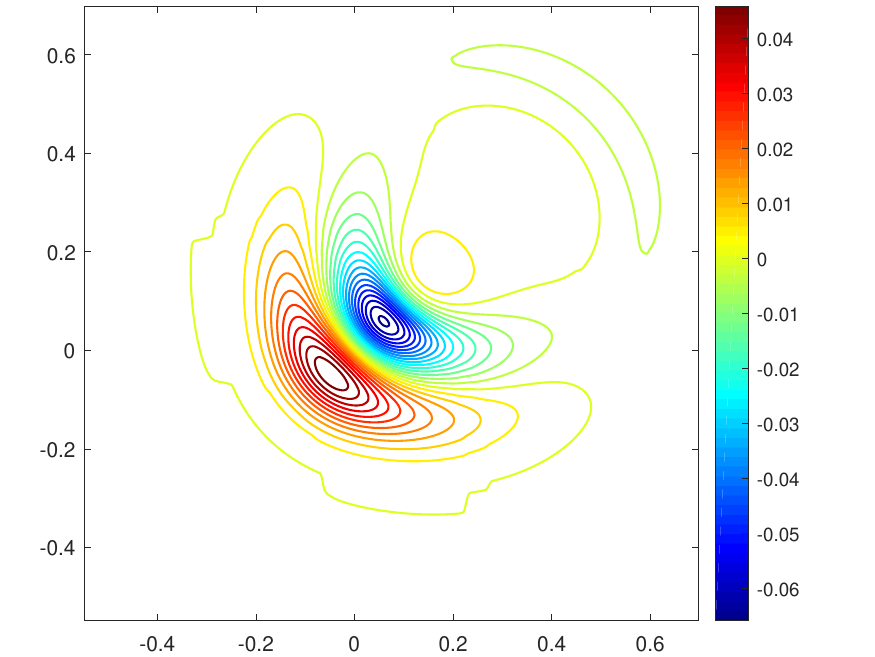}
  }
  \subfigure[upper layer $h_1$]{
  \centering
  \includegraphics[width=7cm,scale=1]{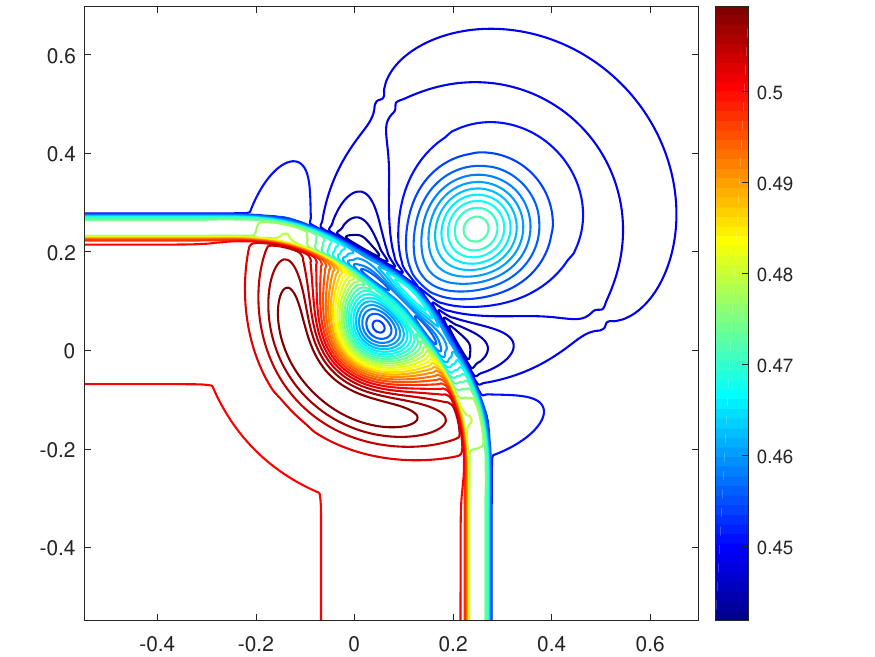}
  }
  \caption{Example \ref{interpro_nonflat2d}: The contours of the water surface $h_1+w$ (left) from -0.07 to 0.05; the upper layer $h_1$ (right) with the initial value (\ref{pro_nonflat_ini}). 30 uniformly spaced contour lines with 250$\times$250 (top) and 500$\times$500 cells (bottom). }\label{propa_nonflat}
\end{figure}

\begin{example}{\bf Internal circular dam break over a flat bottom}\label{dam_flat2d}
\end{example}
We test the same dam breaking problem taken from \cite{chu2022fifth} over a flat bottom topography $b(x,y)=-2$. On the computational domain $[-5,5]\times [-5,5]$, the initial conditions are given by
\begin{equation}\label{dam_flat_ini}
(h_1,m_1,n_1,w,m_2,n_2)(x,y,0) = \left\{\begin{array}{lll}
    (1.8, 0, 0, -1.8, 0, 0),&  \text{if} \  x^2+y^2>4, \\
    (0.2, 0, 0, -0.2, 0, 0), &\text{otherwise}.\\
    \end{array}\right.
\end{equation}
The constant gravitational acceleration is $g = 9.81$ and the density ratio is $r=0.998$. We simulate the results at different times $t=4,6,10,14,16,20$ with 200$\times$200 and 400$\times$400 uniform cells for comparison. A scatter plot of the cross sections of the interface $w$, the water surface $h_1 + w$, and the bottom topography $b$ along the line $y=x$ is provided in Fig. \ref{dam_flat}, which yields well simulation of our studied scheme.

\begin{figure}[htb!]
  \centering
  \subfigure[$t=4$]{
  \centering
  \includegraphics[width=6.5cm,scale=1]{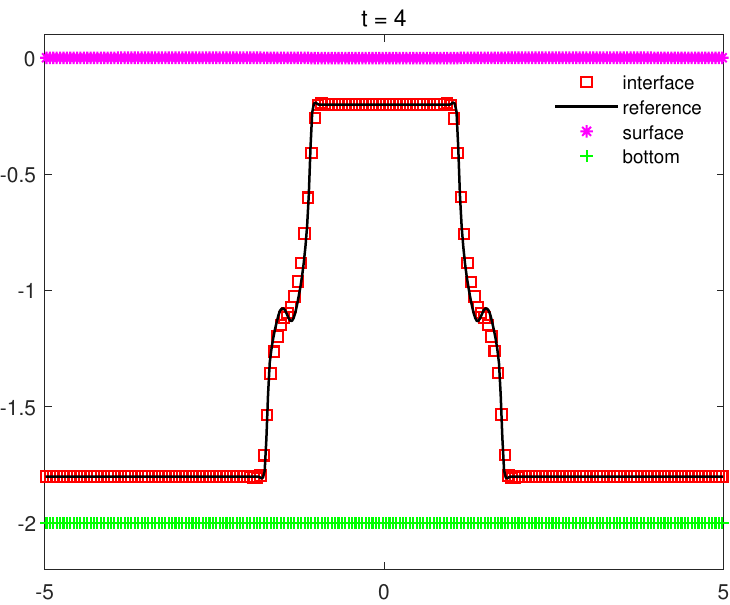}
  }
  \subfigure[$t=6$]{
  \centering
  \includegraphics[width=6.5cm,scale=1]{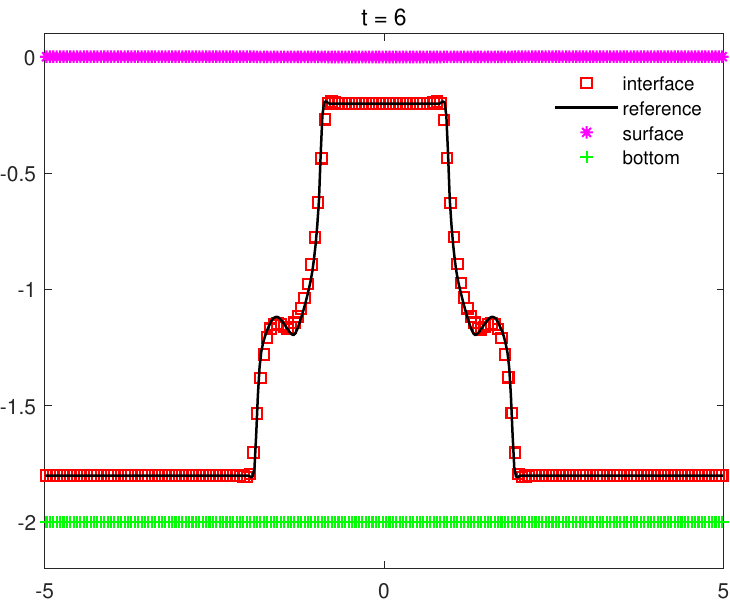}
  }
  \subfigure[$t=10$]{
  \centering
  \includegraphics[width=6.5cm,scale=1]{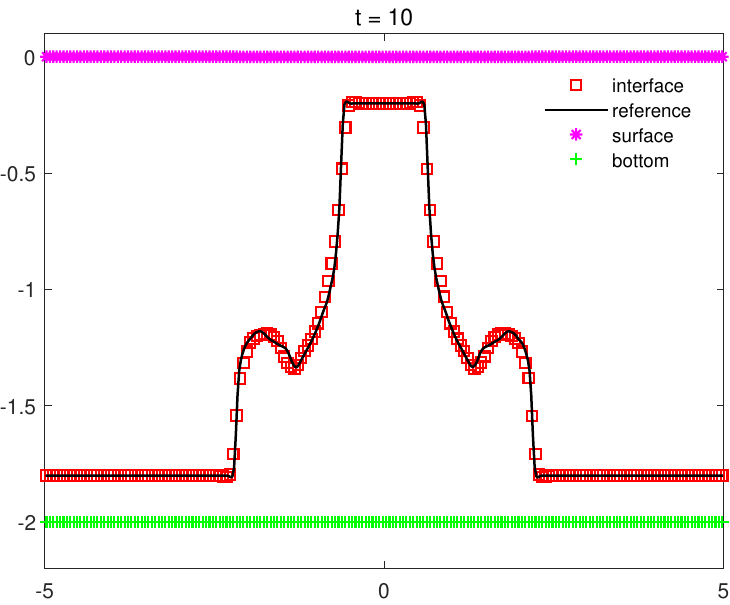}
  }
  \subfigure[$t=14$]{
  \centering
  \includegraphics[width=6.5cm,scale=1]{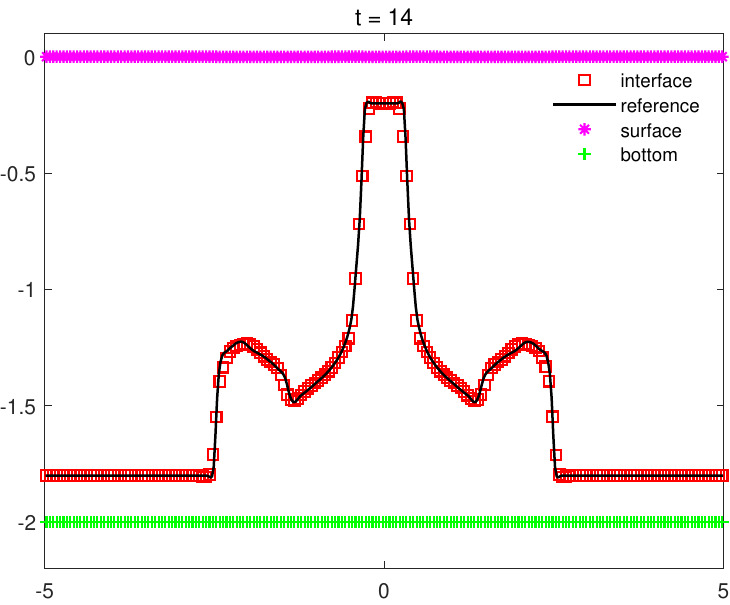}
  }
  \subfigure[$t=16$]{
  \centering
  \includegraphics[width=6.5cm,scale=1]{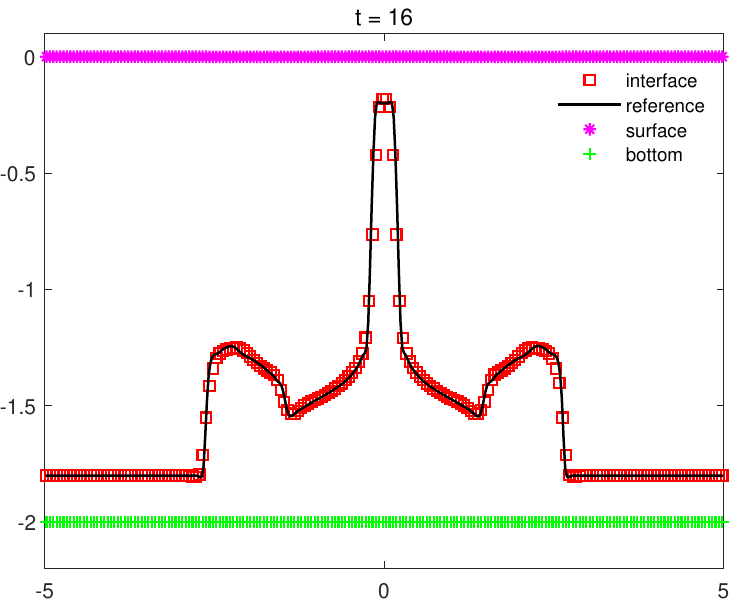}
  }
  \subfigure[$t=20$]{
  \centering
  \includegraphics[width=6.5cm,scale=1]{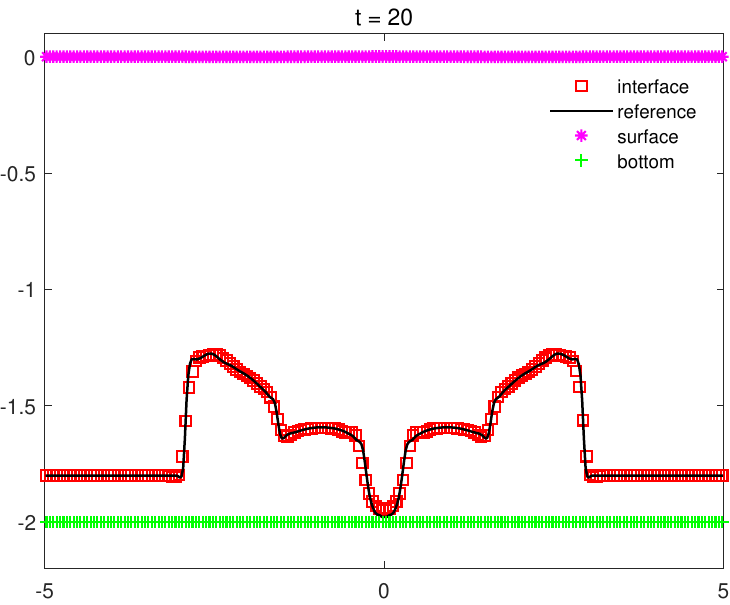}
  }
  \caption{Example \ref{dam_flat2d}: Scatter plot of the cross sections of the interface $w$, the water surface $h_1 + w$ and the bottom topography $b$ along the line $y=x$, with the initial value (\ref{dam_flat_ini}) at different times $t=4,6,10,14,16,20$, using 200$\times $200 and 400$\times$400 cells for comparison. }\label{dam_flat}
\end{figure}

\begin{example}{\bf Internal circular dam break over a nonflat bottom}\label{dam_nonflat2d}
\end{example}
The last example we test is given in \cite{castro2012central}, in which an internal circular dam breaking problem is considered over a nonflat bottom function
$$b(x,y)=0.5\exp^{-5(x^2+y^2)}-2.$$
The initial conditions are given by
\begin{equation}\label{dam_nonflat_ini}
(h_1,m_1,n_1,w,m_2,n_2)(x,y,0) = \left\{\begin{array}{lll}
    (1.8, 0, 0, -1.8, 0, 0),&  \text{if} \  x^2+y^2>1, \\
    (0.2, 0, 0, -0.2, 0, 0), &\text{otherwise},\\
    \end{array}\right.
\end{equation}
on the computational domain $[-2,2]\times [-2,2]$.
The gravitational constant is $g = 9.81$ and the density ratio is $r=0.98$. We run the simulation till times $t=1,2$ with 200$\times$200 uniform cells. The contours of the interface $w$ and the corresponding slices along the line $y=0$ are illustrated in Figs. \ref{dam_nonflat1} and \ref{dam_nonflat2} respectively. The results indicate that our developed well-balanced PCDG scheme can resolve complex features of the flow very well and provide oscillation-free solutions.

\begin{figure}[htb!]
  \centering
  \subfigure[$t=1$]{
  \centering
  \includegraphics[width=7cm,scale=1]{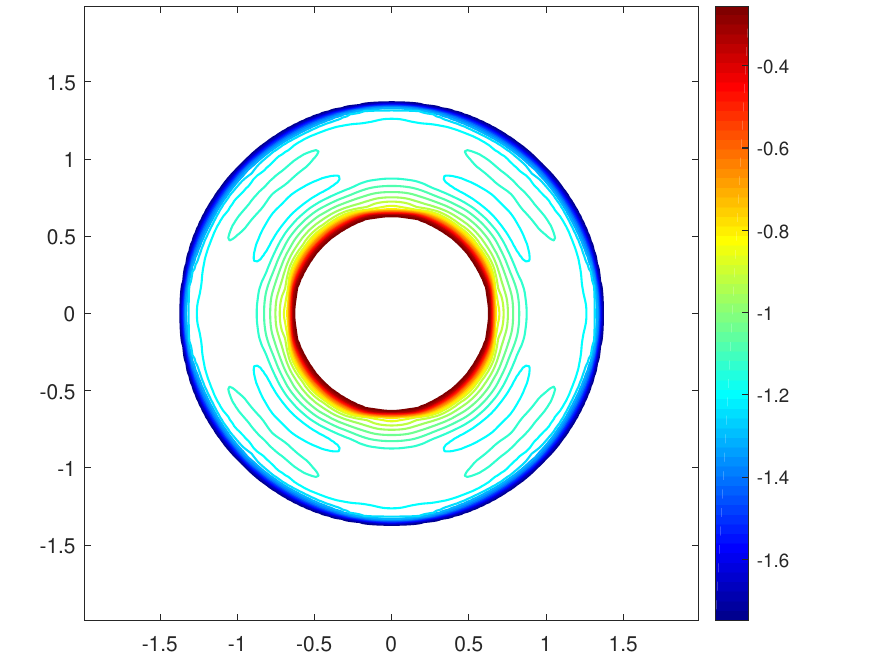}
  }
  \subfigure[$t=2$]{
  \centering
  \includegraphics[width=7cm,scale=1]{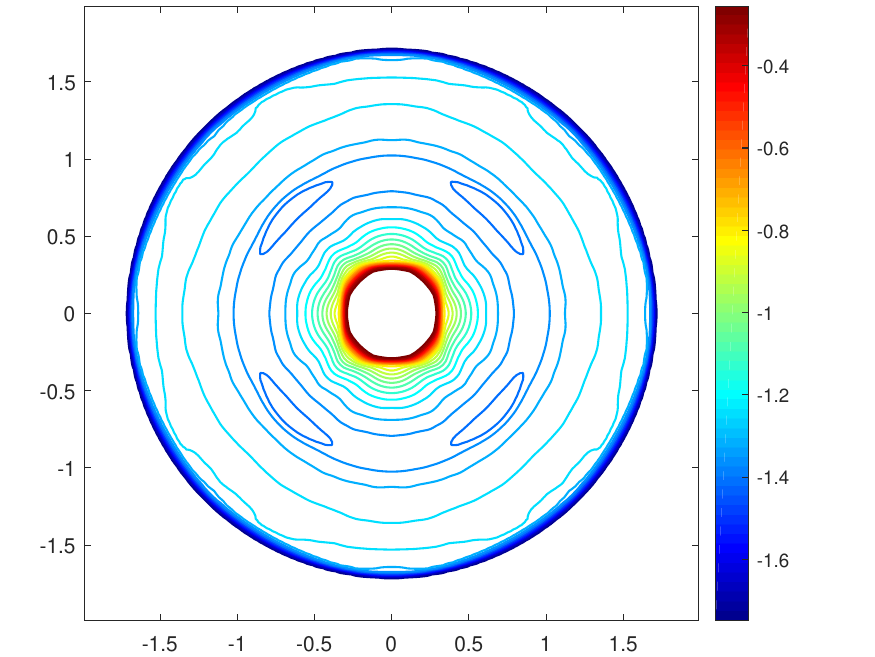}
  }
  \caption{Example \ref{dam_nonflat2d}: The contours of the interface $w$ from -1.805 to -0.2 with the initial value (\ref{dam_nonflat_ini}). 30 uniformly spaced contour lines at different times $t=1$ (left) and $t=2$ (right) with 200$\times$200 cells. }\label{dam_nonflat1}
\end{figure}

\begin{figure}[htb!]
  \centering
  \subfigure[$t=1$]{
  \centering
  \includegraphics[width=7cm,scale=1]{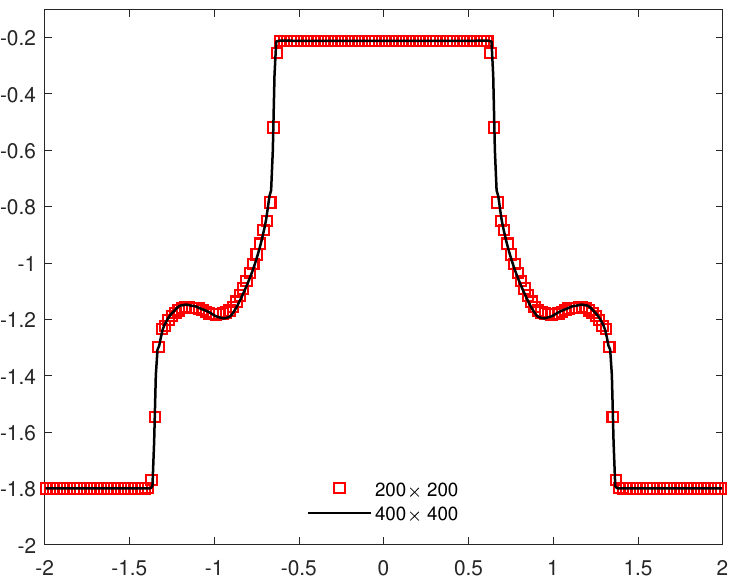}
  }
  \subfigure[$t=2$]{
  \centering
  \includegraphics[width=7cm,scale=1]{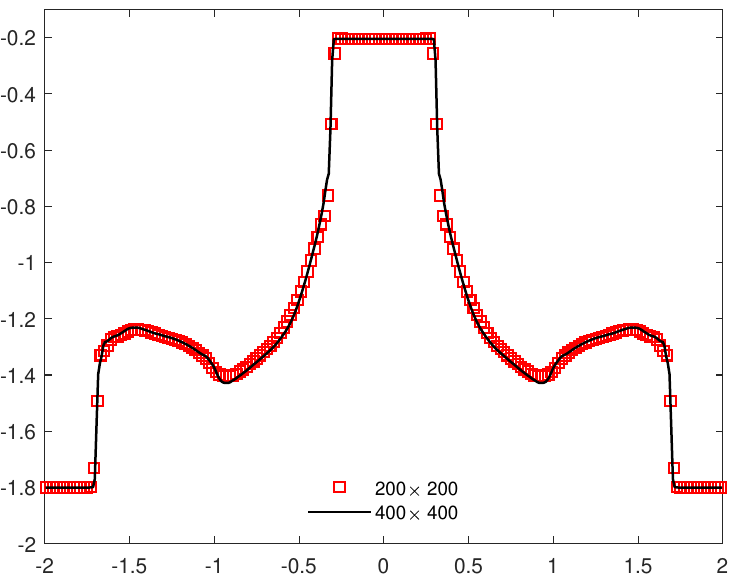}
  }
  \caption{Example \ref{dam_nonflat2d}: Scatter plot of the cross sections of the interface $w$ along the line $y=0$, with the initial value (\ref{dam_nonflat_ini}) at different times $t=1$ (left) and $t=2$ (right), using 200$\times $200 and 400$\times$400 cells for comparison. }\label{dam_nonflat2}
\end{figure}

\section{Conclusion}\label{se:co}
We have developed two new high-order well-balanced discontinuous Galerkin methods for the one- and two-dimensional two-layer shallow water equations within the framework of the path-conservative approach introduced in \cite{dal1995definition}. By employing linear path functions applied to the equilibrium variables, our developed schemes were designed to precisely preserve the still water and moving water equilibrium states within the adopted equilibrium preserving DG space. Theoretical analysis and numerical validation under various conditions have demonstrated that both of our proposed schemes are well-balanced, accurately maintaining the still water and moving water steady states. Furthermore, our PCDG approaches exhibit high-order accuracy, robustness, essentially non-oscillatory properties, and high resolution in capturing complex small features at or near the equilibrium flow. We anticipate that this framework can be extended to various nonconservative models, including those for which the equilibrium variables contain a global (in space) integral quantity. Our future work will consider more complex nonconservative systems, such as the Baer-Nunziato model of compressible two-phase flow.

\setcounter{equation}{0}
\renewcommand\theequation{\Alph{section}.\arabic{equation}}
\renewcommand{\appendixname}{Appendix~\Alph{section}}
\begin{appendix}

\section{Eigenstructures of PCDG scheme for moving water equilibria}\label{a2}
The Jacobi matrix in the TVB limiter is replaced by
\begin{equation}\label{a_m}
  \begin{aligned}
   \mathcal{A}^{*}= \left(\dfrac{\partial {\boldsymbol {v}^e}}{\partial\boldsymbol u} \right)
   \mathcal{A}(\boldsymbol{u})
   \left(\dfrac{\partial {\boldsymbol {v}^e}}{\partial\boldsymbol u}\right)^{-1}  =
  \left[\begin{array}{cccc}
    u_1 & g & 0 & g  \\
    h_1 & u_1 & 0 & 0 \\
    0 & gr & u_2 & g  \\
    0 & 0 & h_2 & u_2  \\
  \end{array}\right].
  \end{aligned}
\end{equation}
Herein the variables $\vec{u}$, $\boldsymbol {v}^e$ are taken in \eqref{2LSWE_co} and \eqref{v_e}, while $\mathcal{A}(\boldsymbol{u})$ in \eqref{Jaco} respectively. 
We can refer to \cite{zhang2023moving} for more details. Let $\lambda_{k}$ be the eigenvalues calculated by the analytical solution to the characteristic quadratic of the matrix (\ref{Jaco}), see more details in Section \ref{se:mo}.
The right eigenvectors $r_k$ corresponding to the matrix (\ref{a_m}) are given by
\begin{equation}\label{r_m}
  r_{k} = \left(u_1-\lambda_k,-\dfrac{c_1^2}{g},\dfrac{(\lambda_k-u_2)(c_1^2 - (u_1-\lambda_k)^2)}{c_2^2},\dfrac{c_1^2 - (u_1-\lambda_k)^2}{g} \right)^T,
\end{equation}
and the matrix $R = [r_1\ r_2\ r_3\ r_4]$, whose columns are right eigenvalues defined in (\ref{r_m}). The left eigenvalues $l_k$ are obtained from the inverse matrix $L = R^{-1} = [l_1\ l_2\ l_3\ l_4]^T$, it has the following expressions
\begin{equation}\label{l_m}
  l_k = \left\{ \dfrac{(c_1^2-u_1^2) - 2u_1\delta_k + \kappa_k}{\zeta_k} \quad -g\dfrac{(c_1^2+u_1^2)\delta_k-u_1(c_1^2-u_1^2+\kappa_k)+\delta_k }{c_1^2\zeta_k} \quad \dfrac{c_2^2}{\zeta_k} \quad g\dfrac{u_2-\delta_k}{\zeta_k}\right\},
\end{equation}
where
\begin{equation}\label{v_s}
  \delta_k = \sum_{j=1,j\neq k}^{4} \lambda_j - 2u_1,\quad \xi_k = \prod_{j=1,j\neq k}^{4}\lambda_j, \quad \kappa_k =  \sum_{j=1,j\neq k}^{4}\prod_{i=1,i\neq j,k}^{4}\lambda_j,\quad \zeta_k = \prod_{j=1,j\neq k}^{4}(\lambda_j - \lambda_k).
\end{equation}.

\end{appendix}

 \section*{Declarations}

 \subsection*{Data Availability}
 The datasets generated during the current study are available from the corresponding author upon reasonable request. They support
 our published claims and comply with field standards.

 \subsection*{Competing of interest}
 The authors declare that they have no known competing financial interests or personal relationships that could have appeared to
 influence the work reported in this paper.

 \subsection*{Funding}
 The research of Yinhua Xia was partially supported by National Key R\&D Program of China No. 2022YFA1005202/2022YFA1005200, and NSFC grant No. 12271498. The research of Yan Xu was partially supported by NSFC grant No. 12071455.


\addcontentsline{toc}{section}{References}
\bibliographystyle{abbrv}
\normalem
\bibliography{shortreference}
\end{document}